\renewcommand{\tocsection}[3]{
  \indentlabel{\@ifnotempty{#2}{\ignorespaces#1 #2\quad}}\bfseries#3}
\renewcommand{\tocsubsection}[3]{
  \indentlabel{\@ifnotempty{#2}{\ignorespaces#1 #2\quad}}#3}
\newcommand\@dotsep{4.5}
\def\@tocline#1#2#3#4#5#6#7{\relax
  \ifnum #1>\c@tocdepth
  \else
    \par \addpenalty\@secpenalty\addvspace{#2}
    \begingroup \hyphenpenalty\@M
    \@ifempty{#4}{
      \@tempdima\csname r@tocindent\number#1\endcsname\relax
    }{
      \@tempdima#4\relax
    }
    \parindent\z@ \leftskip#3\relax \advance\leftskip\@tempdima\relax
    \rightskip\@pnumwidth plus1em \parfillskip-\@pnumwidth
    #5\leavevmode\hskip-\@tempdima{#6}\nobreak
    \leaders\hbox{$\m@th\mkern \@dotsep mu\hbox{.}\mkern \@dotsep mu$}\hfill
    \nobreak
    \hbox to\@pnumwidth{\@tocpagenum{\ifnum#1=1\bfseries\fi#7}}\par
    \nobreak
    \endgroup
  \fi}
\renewcommand\csname r@tocindent0\endcsname{0pt}
\def\l@subsection{\@tocline{2}{0pt}{2.5pc}{5pc}{}}
\newcounter{results}[section]
\theoremstyle{plain}
\newtheorem{theorem}[results]{Theorem}
\newtheorem{lemma}[results]{Lemma}
\newtheorem{proposition}[results]{Proposition}
\newtheorem{corollary}[results]{Corollary}
\theoremstyle{remark}
\newtheorem{remark}[results]{Remark}
\newtheorem{example}[results]{Example}
\theoremstyle{plain}
\newtheorem{definition}[results]{Definition}
\newtheorem{assumption}[results]{Assumption}
\numberwithin{equation}{section}
\theoremstyle{remark}
\newenvironment{continuance}[1]
  {\newcommand\continuanceref{\ref{#1}}\continuancex}
  {\endcontinuancex}
\newcommand{\N}{\mathbb{N}}
\newcommand{\R}{\mathbb{R}}
\newcommand{\X}{\mathbb{X}}
\newcommand{\calA}{\mathcal{A}}
\newcommand{\calB}{\mathcal{B}}
\newcommand{\calP}{\mathcal{P}}
\newcommand{\calQ}{\mathcal{Q}}
\newcommand{\calR}{\mathcal{R}}
\newcommand{\calW}{\mathcal{W}}
\newcommand{\rme}{{\mathrm e}}
\newcommand{\x}{\bar x}
\newcommand{\y}{\bar y}
\newcommand{\YY}{X}
\newcommand{\tb}{\textbf}  
\newcommand{\eps}{\varepsilon} 
\newcommand{\de}{{\,\rm d}}
\newcommand{\argmin}{\mathop{\rm argmin}\limits}
\newcommand{\dom}[1]{\mathscr{D}(#1)}
\newcommand{\floor}[1]{\left\lfloor#1\right\rfloor}
\newcommand{\bracket}[1]{\langle #1 \rangle}
\DeclareMathOperator{\TV}{TV}
\DeclareMathOperator{\OT}{OT}
\DeclareMathOperator{\KL}{KL}
\DeclareMathOperator{\Seps}{S_\epsilon}
\DeclareMathOperator{\OTeps}{\OT_\epsilon}
\DeclareMathOperator{\EVI}{{\rm EVI}}
\title[Evolution variational inequalities with general costs]{Evolution variational inequalities with general costs}
\author{Pierre-Cyril Aubin-Frankowski}
\address{Pierre-Cyril Aubin-Frankowski: CERMICS, ENPC, Institut Polytechnique de Paris, France}
\email{pierre-cyril.aubin@enpc.fr}
\author{Giacomo Enrico Sodini}
\address{Giacomo Enrico Sodini: Faculty of Mathematics, University of Vienna, Oskar-Morgenstern-Platz 1, A-1090 Vienna, Austria}
\email{giacomo.sodini@univie.ac.at}
\author{Ulisse Stefanelli}
\address{Ulisse Stefanelli: Faculty of Mathematics, University of Vienna, Oskar-Morgenstern-Platz 1, A-1090 Vienna, Austria, Vienna Research Platform on Accelerating Photoreaction Discovery, University of Vienna, Währingerstrasse 17, 1090 Wien, Austria.}
\email{ulisse.stefanelli@univie.ac.at}
\date{\today}
\keywords{Gradient flows, Evolution variational inequality (EVI), Bregman divergence, Sinkhorn divergence, Kullback–Leibler divergence, Minimizing movements, Variational methods, Cross-convexity}
\subjclass[2020]{49J40, 37L05, 49J27, 49J52}
\begin{document}

\begin{abstract} We extend the theory of gradient flows beyond metric spaces by studying evolution variational inequalities (EVIs)  driven by  general cost functions $c$,  including  Bregman and entropic transport divergences. We establish several properties of the resulting flows for semiconvex functionals, including stability and energy identities. Using novel notions of convexity  related to   costs $c$, we prove that EVI flows are the limit of splitting schemes, providing assumptions for both implicit and explicit iterations.
\end{abstract}
\maketitle
\tableofcontents
\thispagestyle{empty}

\section{Introduction}

\paragraph{\em\bfseries Context} The theory of gradient flows in metric spaces \cite{ags08}, in particular in  spaces of measures, is a  cornerstone  in  the analysis of evolutionary systems.  A reference tool in this context are minimizing movements, relating evolution to incremental minimization. The minimizing-movements approach is however not restricted to metric spaces. Indeed,  it has been  already  applied  out of the metric setting,  including  the case of  Bregman divergences \cite[eq.(1.4)]{Bregman1967} and their mirror flows \cite{NemirovskyYudinBook1983}.  These  appear in entropic regularizations in optimal transport, see \cite{Leonard2014} and \cite{Peyre2019} for reviews.  When leaving the metric setting,  one is confronted with the question of which of the different formulations of gradient-flow evolution, namely, EDI, EDE, or EVI, as discussed in \cite{Ambrosio2012userguide, Santambrogio2017}  and in Section \ref{sec:slope_local},  should be considered. 
\medskip

\paragraph{\em\bfseries Main contributions} In this article, we   investigate the generalization of  Evolution Variational Inequalities (EVIs)  to general costs  on general sets $X$.  This provides a partial extension of the theory of 
\cite{MuratoriSavare}  beyond  the case of the  square distance $d^2$ on a complete metric space $X$. Our  notion of   gradient flow takes the form
\begin{equation}
	\frac{\de^+}{\de t} c(x,x_t) + \lambda c(x,x_t) \le \phi(x)-\phi(x_t) \quad \forall t>0,  x \in \dom{\phi},    \label{eq:evi_diff_intro}
\end{equation}
where  $\lambda\in\R$, $X$ is a set, $c:X\times X\to [0,+\infty)$  is a cost function, $\phi:X\to (-\infty,+\infty]$  with   domain $\dom{\phi}  =\{x\in X : \phi(x)<+\infty \}$. We remark that here $c$ is not necessarily originating from a distance. We call \eqref{eq:evi_diff_intro} EVI and we say that a curve $x:(0,+\infty)\to X$ is an EVI solution and write  
$x(\cdot) \in \EVI_\lambda(X,c, \phi)$ whenever  \eqref{eq:evi_diff_intro} holds for some $\lambda\in\R$. We show in \Cref{thm:equiv} that the differential formulation \eqref{eq:evi_diff_intro} has several equivalent integral expressions.   For symmetric costs $c(x,y)=c(y,x)$, in \Cref{thm:eviprop} we establish that the set $\EVI_\lambda(X,c, \phi)$ of solutions still  enjoys  some of the properties which hold  in metric spaces.  For example, the  EVI  is $\lambda$-contractive and one can prove an energy identity of the form 
    \begin{equation}\label{eq:energy_identity_intro}
	\frac{\de}{\de t}\phi(x_{t+})=-\lim_{h\downarrow 0}\frac{2c(x_t,x_{t+h})}{h^2} =: -  |\dot x_{t+ }|_c^2  ,  
\end{equation}
where $|\dot x_{t+ }|_c$ plays the role of the metric derivative. Symmetry and non-negativity of the cost  are crucially used in order to obtain this. On the other hand, no notion of local slope is  needed.   
 In fact, although  expression \eqref{eq:evi_diff_intro} is global in nature  as it holds for all test points $x$, we discuss in \Cref{sec:slope_local} a local formulation of the form  
$\nabla_{2,1} c(x_{t}, x_{t})\dot x_{t} \in \partial \phi(x_{t})$, where $\nabla_{2,1}$ is the mixed-Hessian and $\partial \phi$ is the Fr\'echet subdifferential. 

For geodesic metric spaces $(X,d)$ and $c={d^2}/{2}$, the EVI \eqref{eq:evi_diff_intro}  has been related to some notion of  
convexity of $\phi$ w.r.t.\ $d^2$,  see  \cite[Section 3.3]{MuratoriSavare}.  Moreover, in metric spaces one may prove existence of solutions to \eqref{eq:evi_diff_intro} via  minimizing movements and  the  implicit Euler  scheme.  

For general sets $X$ and  general  costs $c$,  a corresponding notion of convexity has been recently brought to evidence in \cite{leger2023gradient} by the first-named author. This notion takes the name of {\it cross-convexity} and delivers an extension of  usual convexity. For $c={d^2}/{2}$,  cross-convexity  takes the form of a discrete EVI, as in  \cite[Corollary 4.1.3]{ags08}. For $c$  being  a Bregman divergence, cross-convexity corresponds to the so-called   {\it three-point inequality} in mirror descent, see, e.g., \cite[Lemma 3.2]{chen1993convergence}.  In essence, cross-convexity  can be interpreted as a compatibility property between energy and cost,  see \Cref{sec:compatibility}.  In the cross-convex setting,  existence of a solution $x(\cdot)$ to \eqref{eq:evi_diff_intro} for a given initial point $x_0$  can be  obtained  by  taking the limit $\tau\to 0$ in the following alternating minimization (see \eqref{eq:iteration_y}--\eqref{eq:iteration_x}) based on the $c/\tau$-transform $f^{c/\tau}(y):=\sup_{x'\in X}[f(x')-\frac{c(x',y)}{\tau}]$, 
\begin{align}
	y_{i+1}^{\tau}&\in \argmin_{y \in X} \left\{ \frac{c(x_{i}^{\tau},y)}{\tau}+g(x_{i}^{\tau}) + f^{c/\tau}(y)\right\}, \label{eq:iter_y_intro} \\
	x_{i+1}^{\tau}&\in\argmin_{x \in X} \left\{ \frac{c(x,y_{i+1}^{\tau})}{\tau} +g(x)+ f^{c/\tau}(y_{i+1}^{\tau})\right\}.\label{eq:iter_x_intro}
\end{align}
 These iterations correspond to  a splitting scheme over $\phi=f+g$, with one explicit step on $f$ and one implicit on $g$. Schemes of this form were extensively studied in \cite{leger2023gradient}. Under some cross-convexity requirements  (see  \Cref{ass:ass2}), for $c$ symmetric,  a  lower-bounded $\phi$, and $\tau$ small enough, we show in \Cref{thm:existence_limit_scheme_fg} the following error estimate between the discrete-time scheme and its continuous-time limit:
\begin{equation}\label{eq:error_estimate_intro}
	c \left (\bar{x}^{\tau}_t, x_t \right ) \le 2\tau (\phi(x_0)-\inf \phi) \quad \forall t \ge 0,
\end{equation}
 where $\bar{x}^{\tau}(\cdot)$ is the piece-wise constant in time interpolant of the discrete values $\{x^\tau_i\}$ on the uniform partition $\{i\tau\}$. 

We now list three examples of costs $c$ of interest.

\begin{example}(distances, $d^p$)
	\label{ex:distance}  Let $(X,d)$ be a complete metric space and $c=d^p$ for $p\ge 1$. Then, for $f=0$, \eqref{eq:iter_x_intro} is the usual implicit Euler discretization.  Flows with asymmetric distances were also considered in \cite{Rossi2009,Chenchiah2009,Ohta2023}.
\end{example}

 Similarly to \cite{ags08},  a key example in the following is  
 $X=\calP(\X)$,  that is  the  set  of  non-negative Borel measures over the measurable space  $\X$ with total mass 1.  Our analysis  covers  in particular that of \cite{ags08} if the cost $c$ is the Wasserstein-2 distance. Two other prominent costs, on which we will showcase our assumptions and theorems,  are  the Kullback--Leibler and  the  Sinkhorn divergences.
\begin{example}(Kullback--Leibler divergence, $\KL$)
	\label{ex:KL} The {\it Kullback--Leibler divergence} or {\it relative entropy} over probability measures is
	\begin{equation*}
		\KL(\mu|\bar \mu) = \left\{
		\begin{array}{ll}
			\int_{\X}\log \left(\frac{\de \mu}{\de \bar \mu}(x)\right) {\rm d}  \mu(x)& \mbox{if } \mu \ll \bar \mu ,  \\
			+\infty & \mbox{otherwise ,  }
		\end{array}\right.
	\end{equation*}	
	where, for any $\mu,\nu\in \calP(\X)$, we write $\mu \ll \nu$ when $\mu$ is absolutely continuous w.r.t\ $\nu$, i.e., when it  admits  a Radon--Nikodym derivative $ {\rm d}  \mu/ {\rm d}  \nu$. The $\KL$ is a Bregman divergence of the entropy $\KL(\cdot| \rho)$ where $ \rho $ is  interpreted as  a positive reference  measure (the Lebesgue measure  on  $\X\subset \R^d$,  for instance). Note that  $\KL$ is not symmetric and does not satisfy the  triangle  inequality. 
\end{example}

\begin{example}(Sinkhorn divergence, $\Seps$)
	\label{ex:Sinkhorn} Assume $\X\subset \R^n$ to be compact. Fix $\epsilon>0$ and take a ground cost $c_\X\in C^1(\X\times\X;\R)$ such that $\exp(-c_\X/\epsilon)$ is a positive definite and universal reproducing kernel, see \cite{feydy2019interpolating}. Then, the {\it entropic optimal transport (EOT) dissimilarity} is defined as
	\begin{equation}\label{eq:oteps}
		\OTeps(\mu,\nu)=\min_{\pi \in \Pi(\mu,\nu)} 
		\int_{\X\times\X} c_\X(x,y) d\pi(x,y)+\epsilon\KL(\pi|\mu\otimes \nu)=\eps\KL(\pi|e^{-c_\X/\eps}\mu\otimes\nu),
	\end{equation}
	where we denote by $\Pi(\mu,\nu)$ the set of couplings having first marginal $\mu$ and second marginal $\nu$.  As $\OTeps(\mu,\mu)\not =0$ in general \cite[Section 1.2]{feydy2019interpolating}, the {\it Sinkhorn divergence} was defined in \cite{genevay2018learning} as  
	\begin{equation}\label{eq:s_eps}
		\Seps(\mu,\nu)=\OTeps(\mu,\nu)-\frac12\OTeps(\mu,\mu)-\frac12\OTeps(\nu,\nu)
	\end{equation}
     which indeed fulfills $\Seps(\mu,\mu)=0$. 
	We refer to the introduction of \cite{Carlier2017}  for a thorough presentation  of $\OTeps$ and to that of \cite[p.~5]{lavenant2024riemannian} for $\Seps$. A self-contained discussion on the limiting behaviours for $\epsilon\to 0$ or $\epsilon\to \infty$ can be found in \cite{Neumayer2021}. As evidenced in \cite[Section 7.1]{lavenant2024riemannian},  neither  $\Seps$ nor $\sqrt{\Seps}$  satisfy the triangle inequality, even though $\Seps$ is symmetric and metrizes the convergence in law  \cite[Theorem 1]{feydy2019interpolating}.
\end{example}
Other natural examples of costs include doubly nonlinear evolution equations  in a vector space  $X$, $\Psi:X\to \R$, and $c_\tau(x,y)=\tau\Psi((x-y)/{\tau})$,  see   \cite{Rossi2009}.  In our framework, we can only deal with  
 homogeneous   potentials  $\Psi$.  In fact, existence of formulations of the form \eqref{eq:evi_diff_intro} for $\Psi$ not homogeneous do not presently seem available.    

Due to the variety of the applications in machine learning, e.g.\ learning on graphs, several directions have been pursued  departing  from the Euclidean-distance  setting.  For instance, these new non-metric costs can be inspired by optimal transport and based on Lagrangians \cite{pooladian2024neural}, or on elastic nets \cite{klein2024learning}. Sometimes  as in \cite{auffenberg2025unsupervised}, the new cost is learnt, e.g.\ via neural or representation-based ground costs. With the emergence of many tasks of learning where $X$ is a space of probabilities, or of probabilities on probabilities, we  foresee  that  non-metric costs, e.g.\ $f$-divergences such as the $\chi^2$, Bregman divergences, or hybrid transport–reaction functionals,  will   become more common, although metrics as in \cite{cuturi2014ground} will remain prominent.

\medskip

\paragraph{\em\bfseries Related work}  Reference works on EVIs in metric spaces are the  three great expositions  in   \cite{ags08,Ambrosio2012userguide,MuratoriSavare}.  Notably, our theory is designed to recover some of these metric results in the case of $(X,d)$ being  a complete metric space and $c={d^2}/{2}$.  The  continuous setting of \Cref{sec:EVI}  is somewhat   closer to \cite[Section 3]{MuratoriSavare},  whereas the  discrete setting of \Cref{sec:f+g}  takes  inspiration from \cite[Section 3.2.4]{Ambrosio2012userguide},  especially in its use of dyadic partitions.  

Several extensions of the canonical gradient flows in metric settings have  already   been considered.  In particular, \cite{Ohta2023}  deals with  asymmetric distances and EDEs on Finsler manifolds,  and  \cite{Craig2017}  studies  EVIs when $\lambda d^2$ in \eqref{eq:evi_diff_intro} is  replaced by $\lambda \omega(d^2)$  for  $\omega:[0,+\infty)\to [0,+\infty)$,  as  motivated by  the celebrated  Osgood's criterion. Beyond  metric spaces,  \cite{Rossi2024} developed a theory for minimizing movements  under so-called  {\it action} costs $a(\tau,x,y)$.  This theory actually covers our choice ${c(x,y)}/{\tau}$, stopping short of discussing the EVI formulation, nonetheless.  

Compactness and completeness are  the two alternative tools which are used to ascertain the existence of minimizing-movement limiting curves. Preferring one over the other naturally brings to two alternative arguments, see e.g.~\cite{rankin2024jkoschemesgeneraltransport}.  Specifically, \cite{MuratoriSavare}  is exclusively using  completeness  whereas \cite{Ohta2023} is based  on compactness.  In our analysis, we elaborate on both approaches, by  providing corresponding sets  
of assumptions.   

In  the metric setting, a  discrete implicit-implicit splitting scheme based on the discrete EVI  has been advanced in \cite[Eq.~(1.1)]{Clement2011}.   An explicit Euler scheme to deal with evolutions in the space of probability measures has been studied in \cite{CavagnariSavareSodini22}, where also the notion of EVI solution is used. 
To our best knowledge,  we provide here the first theory for both implicit and explicit schemes in this EVI context  and in such generality. 

\medskip

\paragraph{\em\bfseries Plan of the paper}   \textbf{Section \ref{sec:EVI}}    is devoted to  formulate the EVI  
with general costs: in Section \ref{sec:comp} we discuss the topological framework we adopt, and in particular the interplay between the cost function and the underlying topology.  In  Section \ref{sec:efevi}, we present the equivalent formulations of the EVI and  discuss the properties of EVI solutions.    Section \ref{sec:slope_local}  discusses  possible local formulations of the EVI,  as well as how the EVI in the metric case leads to the metric slope, and hence to the EDE and EDI. In \textbf{Section \ref{sec:f+g}}, we discuss the existence of EVI solutions for $\phi=f+g$ via an alternating minimization scheme.  This is done first  in  Section \ref{sec:f=0}  in the specific case of the implicit scheme when $f=0$, and then in full generality in Section \ref{sec:f+gproof}.  Prior to this, sufficient compatibility conditions between the energy and cost, extending those from the metric setting,  are discussed in Section \ref{sec:compatibility}.

\section{Evolution Variational Inequalities with general cost \texorpdfstring{$c$}{c}}\label{sec:EVI}

\paragraph{\em\bfseries Basic notation}
In the following, we set $\R_{>0}=(0,+\infty)$.  We use the shorthand l.s.c.\ for lower semicontinuous. Given the real function $u:\R_{>0} \to \mathbb{R}$, we indicate its {\it right derivative} at $t>0$ (whenever existing) as $u'(t+)$. The {\it right Dini derivatives} are denoted by 
$$\frac{\rm d^+}{\rm d t}u(t)=\limsup_{s\downarrow t}\frac{u(s)-u(t)}{s-t},\quad \frac{\rm d}{\rm d t^+}u(t)=\liminf_{s\downarrow t}\frac{u(s)-u(t)}{s-t}.$$

Letting $X$ be a nonempty set, we indicate a trajectory $x:\R_{>0}\to X$ with the symbol $x(\cdot)$ or $(x_t)_{t>0}$. The point at time $t>0$ on the trajectory is denoted by $x_t$. The image of the map $x$ over $I \subset \R_{>0}$ is indicated by $\{x_t\}_{t\in I}$. Given a function $\phi:X \to (-\infty,+\infty]$, we set $\dom{\phi}=\{x\in X\::\: \phi(x)<+\infty\}$ to be its {\it domain} and we say that $\phi$ is {\it proper} if $\dom{\phi}\not = \emptyset$.

Given a {\it directed set} $\mathcal{A}$ (i.e., a nonempty set with a preorder $\leq$  such that every pair of elements has an upper bound), we recall that a {\it net} in $X$ is a mapping $x:\mathcal{A} \to X$, which we also indicate as $(x_a)_{a\in \mathcal A}$. We say that $(y_b)_{b\in \mathcal B}$ is a  subnet   of $(x_a)_{a\in \mathcal A}$  if  there exists a non-decreasing {\it final} function $J:\mathcal B \to \mathcal A$ such that $y_b=x_{J(b)}$ for all $b\in \mathcal B$. We use the symbol $\sigma{-}\lim_a x_a$ to indicate the limit of $(x_a)_{a\in \mathcal A}$ with respect to the topology $\sigma$.  We also write $x_\alpha \xrightarrow[]{\sigma} x$. 
\medskip

\paragraph{\em\bfseries Costs}
We assume to be given a {\it cost} $c:X\times \YY \to \mathbb{R}$. We say that the cost is {\it dissipative} if  
    \begin{equation}\label{eq:addass}\tag{Diss}
     c \ge 0 \quad \text{ and } \quad c(x,x')=0 \text{ if and only if } x=x'.
    \end{equation}
    This in particular entails the dissipativity of both discrete-time and continuous-time evolutions driven by 
    $\phi=f+g$.  
    More precisely,  in  \Cref{sec:f+g}, for splitting schemes on $\phi=f+g$, non-negativity is used to weave the two iterations together. For  the  explicit scheme, i.e.\ $g=0$, non-negativity is not necessary since $c$-concavity  ensures that $\phi=f$ decreases. Costs fulfilling \eqref{eq:addass} that are symmetric have some surprising connections with notions of tropical monotonicity \cite[Proposition 4.1]{aubin2022tropical}. As also made clear in \Cref{sec:f+g}, we often  handle specific expressions, in which differences of cost appear,  so  that replacing the lower bound $0$ in \eqref{eq:addass} by any other constant is admissible.  We restrict ourselves to finite-valued costs in order to avoid subtleties when treating differences of costs. Extended-valued costs may also be considered at the expense of a more intricate analysis.

\subsection{Conditions for compatibility of the topology \texorpdfstring{$\sigma$}{s} with \texorpdfstring{$(c, \phi)$}{c,p}}\label{sec:comp} To discuss convergence and continuity  we need to introduce a topology $\sigma$ on the space $X$. This topology $\sigma$ will be required to be compatible with the cost $c$ and the energy $\phi$. We start by specifying such compatibility requirement in Definition \ref{def:comp}, providing later some sufficient conditions  for such compatibility to hold.  

\begin{definition}[Compatible topology]\label{def:comp} Given $c$ satisfying \eqref{eq:addass} and  $\phi:X \to (-\infty,+\infty]$,   we say that a Hausdorff topology $\sigma$ on $X$ is \emph{compatible with the pair $(c,\phi)$}  if
\begin{enumerate}
    \item[\rm (1)]\label{it:sigma_c_conv} $c$-convergent nets in sublevels of $\phi$ are $\sigma$-convergent: if $(x_\alpha)_{\alpha\in \calA} \subset \{ \phi \le r  \}$ for some $r  \in \R$, $x \in X$, and $c(x_\alpha, x) \to 0$ or $c(x,x_\alpha) \to 0$, then $x_\alpha \xrightarrow[]{\sigma} x$.
    \item[\rm (2)]\label{it:sigma_c_g_lsc} $\phi$ is $\sigma$-lower semicontinuous and $c$  is jointly $\sigma$-lower semicontinuous.
\end{enumerate}
We say that  $\sigma$ is \emph{forward-Cauchy-compatible with the pair $(c,\phi)$} if furthermore 
\begin{enumerate}
    \item[\rm (3)] $c$-forward-Cauchy sequences in sublevels of $\phi$ are $\sigma$-convergent: if $(x_n)_n \subset \{ \phi \le r  \} \subset X$ for some $r  \in \R$ satisfies
    \[ \forall \eps >0  \,\exists N_{\eps} 
    \in \N \text{ s.t. } c(x_n, x_m) < \eps \quad \forall   n \ge m > N_\eps ,\]
    then there exists $x\in X$ such that $x_n \xrightarrow[]{\sigma} x$.
    \end{enumerate}
\end{definition}

Note that we do not require $c$ to be symmetric. In particular,  condition (1) above  allows for situations where $c(x_\alpha, x) \to 0$ but $c(x, x_\alpha) \not\to 0$. When $c$ is symmetric,  we simply refer to  $c$-Cauchy sequences, omitting the term {\it forward}.  
As  the topology $\sigma$ is not assumed to be metrizable,  compactness
does not imply sequential compactness,  which forces us  in the following  to   deal with nets instead of sequences.

\begin{continuance}{ex:distance} Let $\omega: [0,+\infty) \to [0,+\infty)$ be a non-negative, strictly increasing function such that $\omega(0)=0$. Let $(X, d)$ be a complete metric space with induced topology $\sigma_d$, and let $c:= \omega(d)$. If $\phi$ is $\sigma_d$-lower semicontinuous, then $\sigma_d$ is compatible with $(c,\phi)$ and $c$ is jointly $\sigma_d$-continuous. In \cite{Chenchiah2009,Rossi2009,Ohta2023} asymmetric distances are also considered: assume $(X, d)$ is an asymmetric metric space (that is, $d$ satisfies all the axioms of a distance but symmetry) and let $\sigma$ be the topology induced by the forward balls (cf.~\cite[Definition 2.1]{Ohta2023}, \cite[Definition 2.1]{Chenchiah2009}). Then, the assumptions 4.2, 4.3, 4.7 of \cite{Chenchiah2009} on $\sigma$ and $\phi$  are fulfilled in  our setting, that is, $\sigma$ is forward-Cauchy compatible with $(c,\phi)$.
\end{continuance}

\begin{continuance}{ex:KL}
    For $X\subset\calP(\X)$ with $\X$  being  a Polish space  and  $c=\KL$, $\sigma$ can be chosen to be the weak topology on measures which is dominated by the strong topology induced by the $\TV$ distance. By Pinsker's inequality, 
    valid on measurable spaces \cite[Theorem 31]{van2014renyi}, $\KL$ dominates the $\TV$ distance,  whence {\rm (1)} is satisfied. 
    Moreover, it  is known that $\KL$ is jointly $\sigma$-lower semicontinuous \cite[Theorem 19]{van2014renyi}, so that  {\rm (2)} holds for all $\phi$ which are $\sigma$-l.s.c. To ensure finite values for $c$, we can take $X$ to be a $\sigma$-closed subset of $\calP(\X)$ such that $\KL(x|y)<+\infty$ for all $x,y\in X$, e.g., fixing a measure $\rho$ and setting $\mu\in X$ if and only if $\mu =f\rho$ with $f\in [a,b]$ $\rho$-a.s.\ with $0<a<b<+\infty$. In the latter case, $X$ is naturally identified with a subset of $L^1(\rho)$ and one could consider on $X$ the $L^1(\rho)$-topology $\sigma_\rho$ which is stronger than $\sigma$. Consequently, $\KL$ is jointly $\sigma_\rho$-continuous on $X$, so that $\sigma_\rho$ is compatible with $(c, \phi)$ for any $\sigma_\rho$-lower semicontinuous functional $\phi:X \to (-\infty, + \infty]$.
\end{continuance}

\begin{continuance}{ex:Sinkhorn} 
    For $X=\calP(\X)$ with compact $\X$, $c=\Seps$ and $\sigma$ the weak topology on measures, by \cite[Theorem 1]{feydy2019interpolating}, $\Seps$ is  jointly  $\sigma$-continuous, so ({\rm 2)} holds  whenever  $\phi$ is   $ \sigma$-l.s.c.    Moreover, $\Seps$ metrizes  the  weak convergence, hence {\rm (1)} is satisfied.    Since $X=\calP(\X)$ is $\sigma$-compact,  owing to \Cref{le:topology}  below one has that forward-Cauchy-compatibility also holds.  
\end{continuance}
We  now present some sufficient  conditions   for the compatibility of $\sigma$ with the  
 pair  $(c,\phi)$. The first condition  applies to the case of a function $\phi$ with  $\sigma$-compact  sublevels.  

\begin{lemma}[$\phi$ with $\sigma$-compact sublevels]\label{le:topology}  Let $c$ be a cost on $X$ satisfying \eqref{eq:addass}, let  $\sigma$ be a Hausdorff topology on $X$ such that  $c$ is jointly $\sigma$-lower semicontinuous, and   such that  the  sublevels of $\phi:X\to(-\infty,+\infty]  $ are $\sigma$-compact.  Then, $\sigma$ is forward-Cauchy-compatible with $(c,\phi)$.
\end{lemma}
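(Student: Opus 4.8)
The plan is to verify the three conditions in \Cref{def:comp}, of which conditions (1) and (2) are nearly immediate and the content is in condition (3).

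\emph{Conditions (1) and (2).}
First I would note that (2) is hypothesized directly: $c$ is jointly $\sigma$-lower semicontinuous by assumption, and $\sigma$-lower semicontinuity of $\phi$ follows from $\sigma$-compactness of its sublevels — indeed, a function whose sublevels are $\sigma$-compact has in particular $\sigma$-closed sublevels (compact subsets of a Hausdorff space are closed), which is exactly $\sigma$-lower semicontinuity. For (1), suppose $(x_\alpha)_{\alpha\in\calA}\subset\{\phi\le r\}$ and $c(x_\alpha,x)\to 0$ (or $c(x,x_\alpha)\to 0$). By $\sigma$-compactness of $\{\phi\le r\}$, every subnet of $(x_\alpha)$ has a further subnet $\sigma$-converging to some point of $\{\phi\le r\}$. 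Along such a subnet, joint $\sigma$-lower semicontinuity of $c$ gives $c(\text{limit},x)\le\liminf c(x_\alpha,x)=0$ (resp.\ $c(x,\text{limit})\le 0$), so by \eqref{eq:addass} the limit equals $x$. Since in a Hausdorff space a net all of whose subnets have a subnet converging to a common point $x$ must itself converge to $x$, we conclude $x_\alpha\xrightarrow{\sigma}x$. This standard subnet argument is the one place to be slightly careful.

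\emph{Condition (3).}
Now let $(x_n)_n\subset\{\phi\le r\}$ be $c$-forward-Cauchy. By $\sigma$-compactness of $\{\phi\le r\}$ there is a subnet $(x_{n_\beta})_{\beta\in\calB}$ with $x_{n_\beta}\xrightarrow{\sigma}x$ for some $x\in\{\phi\le r\}$. The claim is that the \emph{whole} sequence $\sigma$-converges to $x$. Fix $\eps>0$ and take $N_\eps$ from the forward-Cauchy condition, so that $c(x_n,x_m)<\eps$ for all $n\ge m>N_\eps$. Fix any $m>N_\eps$. Then for all $n\ge m$ we have $x_n\in\{c(\cdot,x_m)\le\eps\}$, which is $\sigma$-closed by joint $\sigma$-lower semicontinuity of $c$; passing to the limit along the subnet $(x_{n_\beta})$ (eventually $n_\beta\ge m$, since $n_\beta\to\infty$ along $\calB$ because $J$ is final) gives $c(x,x_m)\le\eps$. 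As $m>N_\eps$ was arbitrary, $c(x,x_m)\le\eps$ for all $m>N_\eps$. But now condition (1), just proved, applied to the sequence $(x_m)_{m>N_\eps}$ — wait, that would need $c(x,x_m)\to0$, which we only have up to $\eps$; instead argue directly: since $\eps>0$ was arbitrary and $N_\eps$ was chosen accordingly, we obtain $c(x,x_m)\to 0$ as $m\to\infty$. Then condition (1) yields $x_m\xrightarrow{\sigma}x$, proving (3).

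\emph{Main obstacle.}
The substantive point is the passage from subnet convergence to convergence of the whole sequence in the non-metrizable setting: one cannot extract a convergent subsequence, only a convergent subnet, so the Cauchy-type condition must be leveraged through the $\sigma$-closed sublevel sets $\{c(\cdot,x_m)\le\eps\}$ to pin the subnet limit down to within $\eps$ of all sufficiently late terms, and then let $\eps\downarrow0$. Everything else is bookkeeping with subnets, Hausdorffness, and the elementary consequences of \eqref{eq:addass} and joint lower semicontinuity.
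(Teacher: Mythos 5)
Your proof is correct and follows essentially the same route as the paper's: (2) from $\sigma$-compactness plus Hausdorffness, (1) via the subnet/lower-semicontinuity argument combined with \eqref{eq:addass}, and (3) by extracting a $\sigma$-convergent subnet, using lower semicontinuity of $c(\cdot,x_m)$ to obtain $c(x,x_m)\le\eps$ for all $m>N_\eps$, hence $c$-convergence of the sequence to $x$, and then invoking the already-proved property (1).
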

\begin{proof} Since $\sigma$ is Hausdorff,  the  sublevels of $\phi$ are closed, so that $\phi$ is $\sigma$-lower semicontinuous  and  property (2) of Definition \ref{def:comp} is satisfied. 

Let us show that property (1) holds: let $(x_\alpha)_{\alpha\in \calA} \subset X$ be a net in a sublevel of $\phi$ and $x \in X$ be such that $c(x,x_\alpha)\to 0$. Let $(x_\beta)_{\beta \in \calB}$ be any subnet of $(x_\alpha)_{\alpha\in \calA}$  which is  $\sigma$-converging to a point $y\in X$. By  joint   $\sigma$-lower semicontinuity of $c$, we have
\[ 0 = \liminf_{\beta \in \calB} c(x, x_\beta) \ge c(x,y)\]
which implies that $x=y$ by   \eqref{eq:addass}. This, together with the $\sigma$-compactness of sublevels of $\phi$, shows that the net $(x_\alpha)_{\alpha\in \calA}$ $\sigma$-converges to $x$.

 In order to check the  $c$-forward-Cauchy  property (3) of Definition \ref{def:comp},  let $(x_n)_n \subset \{ \phi \le r \} \subset X$ for some $r  \in \R$ and suppose that $(x_n)_n$ is $c$-forward-Cauchy.  As the sublevel $\{ \phi \le r \} $ is $\sigma$-compact, we can find a $\sigma$-convergent subnet $(x_{J(\lambda)})_{\lambda \in \Lambda}$, where  $\Lambda$ is a directed set  and  $J:\Lambda \to \N$ is a final  monotone   function,  namely,  
$\sigma{-}\lim_\lambda x_{J(\lambda)} =x$ for some $x \in X$. 
Fix $\eps>0$ and let $N_\eps \in \N$ be such that $c(x_n, x_m) < \eps$ for every  $n \ge m \ge N_\eps$. Since $J$ is a final function, we can find some $\lambda_{\eps,m} \in \Lambda$ such that $\lambda \ge m\ge N_{\eps}$ for every $\lambda \ge \lambda_{\eps,m}$. Thus we have
\[ c(x_{J(\lambda)}, x_{m}) < \eps \quad \text{ for every } m \ge N_\eps, \, \lambda \ge \lambda_{\eps,m}.\]
Passing first to the $\liminf_\lambda$ and using the  joint  $\sigma$-lower semicontinuity of $c$, we get that 
\[ c(x, x_m) \le \eps \quad \text{ for every } m \ge N_\eps,\]
which means that $x_n$ $c$-converges to $x$.  By the first part of this proof, we deduce that $x_n$ $\sigma$-converges to $x$,  which proves the assertion. 
\end{proof}

Alternatively to checking that a given topology is compatible, one can use the  
cost $c$ 
to  construct  one. Given a  symmetric cost $c$ satisfying \eqref{eq:addass},  referred to as  a {\it semimetric} in part of the literature, see \cite[Section 2]{semim2}, one can define  a topology $\sigma_c$ on  $X$ by prescribing that a set $A\subset X$ is {\it open} (hence, $A\in \sigma_c$)  
if for every $x \in A$ there exists $r>0$ such that
\[ B_c(x,r):=\{y \in X : c(x,y) < r \} \subset A. \]
Notice that in general $\sigma_c$ is not Hausdorff, $B_c(x,r) \notin \sigma_c$, $c$-convergent sequences do not have the $c$-Cauchy property, and $c$ is not  jointly  $\sigma_c$-continuous.   However, if $c$ is a \emph{regular semimetric}  in the sense of Lemma \ref{lem:semi} below,  these pathologies do not appear.   

\begin{lemma}[$c$ regular semimetric]\label{lem:semi} Suppose that  $c$ is a symmetric cost satisfying \eqref{eq:addass} and that it is additionally  \emph{regular}, i.e.
\begin{equation}\label{eq:semi} \lim_{r \to 0 } \sup_{y \in X} \sup_{z,w \in B_c(y,r)} c(z,w)=0,
\end{equation}
or, equivalently, that $\Phi_c:\R_{>0}\times \R_{>0} \to \R_{>0}$ defined as
\begin{equation}\label{eq:semi2}
    \Phi_c(r_1,r_2):=\sup \{ c(x,y) \, | \, \exists \, p \in X: c(p,x) \le r_1, \, c(p,y)\le r_2\}
\end{equation}
is continuous at $(0,0)$.  Moreover, assume that   $(X,c)$ is $c$-complete (i.e.,~$c$-Cauchy sequences are $c$-convergent) and $\phi$ is $\sigma_c$-lower semicontinuous.  Then,   $\sigma_c$ is compatible with $(c,\phi)$ and $c$ is jointly $\sigma_c$-continuous.
\end{lemma}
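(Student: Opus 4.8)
The plan is to verify the three conditions of Definition \ref{def:comp} in the order (2), then (1), then the completeness-type property (3), using the regularity hypothesis \eqref{eq:semi} (equivalently, the continuity of $\Phi_c$ at $(0,0)$) as the main tool, together with $c$-completeness. The overall strategy is that regularity turns the ``quasi-triangle inequality'' defect into something controllable: for $z,w$ both $c$-close to a common point $p$, the quantity $c(z,w)$ is small, uniformly. This is exactly the surrogate for the triangle inequality that is missing for a general semimetric, and it is what makes $\sigma_c$ behave like a genuine metric topology.

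\emph{Step 1: joint $\sigma_c$-continuity of $c$, and Hausdorffness.} First I would show that every ball $B_c(x,r)$ is $\sigma_c$-open: if $y\in B_c(x,r)$, set $\delta=r-c(x,y)>0$ and pick $\rho>0$ small enough that $\Phi_c(\rho,\rho)<\delta$ and $\rho\le\Phi_c(\rho,\rho)$ is not needed — rather, use that for $z\in B_c(y,\rho)$ one has $c(x,z)\le\Phi_c(c(x,y),\rho)$; by continuity of $\Phi_c$ at... here one must be slightly careful since $\Phi_c$ is only asserted continuous at $(0,0)$, so instead I would argue directly: choose $\rho$ with $\sup_{z,w\in B_c(q,\rho)}c(z,w)<\delta$ uniformly in $q$ (from \eqref{eq:semi}); applying this with $q=y$ and noting $x$ need not lie in $B_c(y,\rho)$, the cleaner route is to first prove the triangle-type estimate $c(x,z)\le c(x,y)+\eta$ whenever $c(y,z)$ is small, which follows by taking $p=y$ in \eqref{eq:semi2}. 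Concretely, $c(x,z)\le \Phi_c(c(x,y),c(y,z))$, and since $\Phi_c(r_1,\cdot)\to \Phi_c(r_1,0)=r_1$... — again this uses continuity away from $(0,0)$. The robust fix: prove that $r\mapsto \Phi_c(r_1,r)$ is bounded near $0$ and deduce openness of balls from \eqref{eq:semi} applied with center the midpoint, i.e. use that $x\in B_c(x,r')$ for small $r'$ and $z\in B_c(y,\rho)\subset B_c(x,r)$ once $\rho$ is small, by a two-step chaining through $y$. Once balls are open, a net $x_\alpha\xrightarrow{\sigma_c}x$ has $c(x,x_\alpha)\to 0$ eventually inside every $B_c(x,r)$, and conversely; combined with \eqref{eq:semi} this yields $c(x_\alpha,x_\beta)\to 0$ for $x_\alpha\to x$, $x_\beta\to x$ in the appropriate product sense, giving joint continuity of $c$, hence joint lower semicontinuity, hence condition (2) (the $\sigma_c$-l.s.c.\ of $\phi$ being assumed). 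Hausdorffness of $\sigma_c$ follows: for $x\ne x'$ we have $c(x,x')=r>0$ by \eqref{eq:addass}, and picking $\rho$ with $\Phi_c$-control we separate $x,x'$ by balls $B_c(x,\rho),B_c(x',\rho)$ using the chaining estimate to show they are disjoint.

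\emph{Step 2: condition (1).} Let $(x_\alpha)\subset\{\phi\le r\}$ with $c(x,x_\alpha)\to 0$ (the case $c(x_\alpha,x)\to 0$ being identical by symmetry). I claim $x_\alpha\xrightarrow{\sigma_c}x$, i.e. for every $\eps>0$ eventually $x_\alpha\in B_c(x,\eps)$, which is immediate from $c(x,x_\alpha)\to 0$ and the definition of $\sigma_c$ once we know $B_c(x,\eps)$ is $\sigma_c$-open — but here we only need eventual membership in the ball, not its openness, and membership is by definition of $c$-convergence. So (1) is essentially tautological for $\sigma_c$; the sublevel restriction plays no role. (It is (3) where sublevels and completeness enter.)

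\emph{Step 3: condition (3) via $c$-completeness.} Let $(x_n)\subset\{\phi\le r\}$ be a $c$-Cauchy sequence (``forward'' is vacuous by symmetry). By $c$-completeness of $(X,c)$ there exists $x\in X$ with $c(x_n,x)\to 0$; by Step 2, $x_n\xrightarrow{\sigma_c}x$, which is precisely (3). Thus $\sigma_c$ is forward-Cauchy-compatible — in particular compatible — with $(c,\phi)$, and Step 1 gives the joint $\sigma_c$-continuity of $c$.

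\emph{Main obstacle.} The only genuinely delicate point is Step 1: deducing from \eqref{eq:semi} — which only controls $c(z,w)$ when $z,w$ share a common $r$-close anchor — a usable approximate triangle inequality and the openness of $c$-balls, given that $\Phi_c$ is assumed continuous \emph{only at the origin}. The right lemma to isolate is: there exist a modulus $\omega_c$ with $\omega_c(0+)=0$ such that $c(x,z)\le c(x,y)+\omega_c\big(c(y,z)\big)$ whenever $c(y,z)$ is below some threshold, obtained by anchoring at $p=y$ in \eqref{eq:semi2} together with a short argument that $\Phi_c(s,t)\to s$ as $t\to 0$ uniformly for $s$ in bounded sets (which does follow from continuity at $(0,0)$ after a covering/monotonicity argument in the second variable, using that $\Phi_c$ is nondecreasing in each argument and $\Phi_c(s,0)=s$). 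Everything else is bookkeeping with the definition of $\sigma_c$.
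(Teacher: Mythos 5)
Your Steps 2 and 3 are correct, but they are the easy part of the lemma: condition (1) of Definition \ref{def:comp} is immediate from the very definition of $\sigma_c$ (every $\sigma_c$-neighbourhood of $x$ contains some ball $B_c(x,r)$), and condition (3) then follows from $c$-completeness via Step 2. The substance of the statement --- Hausdorffness of $\sigma_c$ and the joint $\sigma_c$-lower semicontinuity/continuity of $c$ --- is exactly your Step 1, and there the proposal has a genuine gap. The ``key lemma'' you isolate, namely an approximate triangle inequality $c(x,z)\le c(x,y)+\omega_c\bigl(c(y,z)\bigr)$ for small $c(y,z)$, equivalently $\Phi_c(s,t)\to s$ as $t\downarrow 0$ locally uniformly in $s$, does \emph{not} follow from continuity of $\Phi_c$ at $(0,0)$: regularity \eqref{eq:semi} only controls $c(z,w)$ when \emph{both} arguments are close to a common anchor, and says nothing when one of the two distances stays of order one. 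Concretely, take $X=\{x,y\}\cup\{z_n:n\in\N\}$ with $c(y,z_n)=1/n$, $c(z_n,z_m)=|1/n-1/m|$, $c(x,y)=1$, $c(x,z_n)=2$, extended symmetrically; \eqref{eq:addass} holds, every ball of radius $r<1$ has $c$-diameter at most $2r$ (so \eqref{eq:semi} holds), and the space is $c$-complete. Yet $\Phi_c(1,t)\ge 2$ for every $t>0$, so no modulus $\omega_c$ as in your plan can exist, and in fact $z_n\xrightarrow{\sigma_c}y$ while $c(x,z_n)=2\not\to c(x,y)=1$. Hence the ``covering/monotonicity argument'' you invoke is false, and Step 1 cannot be completed along the lines you indicate; since condition (2) of Definition \ref{def:comp} (joint $\sigma_c$-l.s.c.\ of $c$) and the Hausdorff property are only obtained through Step 1, the compatibility claim is not established either.

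This is precisely where the paper takes a different route: it does not attempt any triangle-type estimate at positive scales, but invokes the Chittenden-type metrization theorem for regular semimetric spaces, \cite[Theorem 3.2]{semim}, by which regularity and completeness produce a complete metric $\varrho$ uniformly equivalent to $c$; Hausdorffness of $\sigma_c$ and the identification of $c$-convergence with $\sigma_c$-convergence then come for free. So your instinct that Step 1 is the main obstacle is right, but the obstacle is exactly the content outsourced to that citation, not bookkeeping. Note, incidentally, that the example above also shows that passing from uniform equivalence to continuity of $c$ at pairs with $c(x,y)>0$ is delicate; a self-contained argument in your style would require either the uniformity/metrization machinery or a strengthening of regularity (control of $\Phi_c$ near all points $(s,0)$, not just at the origin).
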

\begin{proof} The equivalence  between \eqref{eq:semi} and \eqref{eq:semi2}   can be found in \cite[Lemma 1]{bessenyei2014contractionprinciplesemimetricspaces}. By \cite[Theorem 3.2]{semim}, the regularity and completeness of $c$ imply the existence of a complete metric $\varrho$ on $X$ which is uniformly equivalent to $c$, i.e.,
\begin{enumerate}
    \item for every $\eps>0$ there exists $\delta>0$ such that $\varrho(x,y)< \eps$ whenever $c(x,y)< \delta$;
    \item for every $\eps>0$ there exists $\delta>0$ such that $c(x,y)< \eps$ whenever $\varrho(x,y)< \delta$.
\end{enumerate}
This gives in particular that $\sigma_c$ is induced by $\varrho$ so that $\sigma_c$ is Hausdorff, $\varrho$ is $\sigma_c$-continuous (hence also $c$ is), and that $c$-convergent sequences are $\varrho$-convergent, hence also $\sigma_c$-convergent.
\end{proof}

\subsection{Equivalent formulations and main properties of the EVI}\label{sec:efevi}

 In this section, we introduce the notion of EVI solution and discuss some of its properties. To start with, we present a result on equivalent characterizations of trajectories.

\begin{theorem}[Equivalent  EVI definitions]\label{thm:equiv}  Let $\lambda \in \R$, $\phi:X\to (-\infty, + \infty]$,  $c$  satisfies  \eqref{eq:addass},  and the topology  $\sigma$ be compatible with the pair $(c,\phi)$.  Given the trajectory $x(\cdot):\R_{>0}  \to X$,  for either $\lambda \ge 0$ or $\lambda < 0$ and $c$  additionally  separately $\sigma$-continuous, the following are equivalent:
\begin{enumerate}[\rm i)]
    \item\label{it:evi_diff} $x(\cdot)$ is $\sigma$-continuous,  $x_t \in \dom{\phi}$ for all $t>0$,  and $x(\cdot)$ satisfies the \emph{EVI in differential form}:
    \begin{equation}
    \frac{\de^+}{\de t} c(x,x_t) + \lambda c(x,x_t) \le \phi(x)-\phi(x_t) \quad \forall t>0,  x \in \dom{\phi}.    \label{eq:evi_diff}
    \end{equation}
    \item\label{it:evi_int} For all $x\in \dom{\phi}$, the maps $t\mapsto \phi(x_t)$ and $t\mapsto c(x,x_t)$ belong to $L^1_{ \rm loc}(\R_{>0})$, $(s,t)\mapsto c(x_s,x_t)$ is Lebesgue measurable on  $\R_{>0}\times \R_{>0}$,   and $x(\cdot)$ satisfies the \emph{EVI in integral form}
    \begin{equation}
    c(x,x_t)-c(x,x_s) + \lambda\int_{s}^t c(x,x_r) \de r \\
    \le (t-s)\phi(x)-\int_s^t \phi(x_r) \de r \quad \forall \, 0 < s \le t.    \label{eq:evi_int}
    \end{equation}
    \item\label{it:evi_exp_int} $x(\cdot)$ satisfies the \emph{EVI in exponential-integral form}
    \begin{equation}
    e^{\lambda(t-s)}c(x,x_t)-c(x,x_s)
    \le E_\lambda(t-s)\left(\phi(x)-\phi(x_t)\right) \quad \forall \, 0 < s \le t,   x \in \dom{\phi},    \label{eq:evi_exp_int}
    \end{equation}
    where $E_\lambda(t)\coloneqq \int_0^t e^{\lambda r}  \de  r = \left\{ \begin{array}{cl}
          \frac{e^{\lambda t}-1}{\lambda}  &\text{ for } \lambda\neq 0 \\
          t &\text{ for } \lambda = 0.
    \end{array} \right.$
\end{enumerate}
    Furthermore, if $x(\cdot)$ satisfies any of the above, then $t\mapsto \phi(x_t)$ is lower semicontinuous and non-increasing (in particular, right-continuous), and we have the oriented  local Lipschitzianity relation
    \begin{equation}\label{eq:Lip_c}
        0 \le c(x_s,x_t)\le E_{-\lambda}(t-s)(\phi(x_{s_0})-\phi(x_{t_0})) \quad \forall \, 0<s_0 \le s \le t \le t_0. 
    \end{equation}
     \end{theorem}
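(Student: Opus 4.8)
The plan is to take the differential form \eqref{eq:evi_diff} as the pivot and prove the two equivalences \eqref{eq:evi_diff} $\Leftrightarrow$ \eqref{eq:evi_exp_int} and \eqref{eq:evi_diff} $\Leftrightarrow$ \eqref{eq:evi_int} by passing between pointwise Dini inequalities and their integrated forms, reading off the remaining assertions along the way. Two tools will be used repeatedly. The first is a classical one-sided comparison lemma: a lower semicontinuous $y:\mathbb{R}_+\to\mathbb{R}$ with $\frac{\de^+}{\de t}y(t)<+\infty$ everywhere is right-continuous, and if moreover $\frac{\de^+}{\de t}y(t)\le\zeta(t)$ with $\zeta\in L^1_{\rm loc}$ then $y(t)-y(s)\le\int_s^t\zeta\,\de r$; I will apply it to $y(t)=e^{\lambda t}c(x,x_t)$ — legitimate since $e^{\lambda t}$ is positive and $C^1$ and $c(x,\cdot)$ is $\sigma$-lower semicontinuous by \Cref{def:comp}(2) — to turn \eqref{eq:evi_diff} into integrated inequalities, and conversely I will divide an integrated inequality on $[t,t+h]$ by $h$ and let $h\downarrow0$ to recover \eqref{eq:evi_diff}. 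The second tool is \Cref{def:comp} itself, which gives the lower semicontinuity of $t\mapsto c(x,x_t)$ and $t\mapsto\phi(x_t)$ along a $\sigma$-continuous curve and, crucially, upgrades $c$-convergence of curve values to $\sigma$-convergence; this is how I will obtain the $\sigma$-continuity of $x(\cdot)$ in the cases \eqref{eq:evi_int}, \eqref{eq:evi_exp_int}, where it is not assumed. I expect the delicate point to be the monotonicity of $t\mapsto\phi(x_t)$: once it is in hand it makes $t\mapsto\phi(x_t)$ locally bounded, hence $L^1_{\rm loc}$, it lets one replace $\int_s^t\phi(x_r)\,\de r$ by $(t-s)\phi(x_t)$, and it delivers \eqref{eq:Lip_c}.

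\emph{From \eqref{eq:evi_diff} to \eqref{eq:evi_exp_int}.} Assuming \eqref{eq:evi_diff}, each $t\mapsto c(x,x_t)$ is l.s.c.\ with finite upper right Dini derivative (by \eqref{eq:evi_diff} itself), hence right-continuous, and $t\mapsto\phi(x_t)$ is l.s.c. For the monotonicity I argue by contradiction: if $\phi(x_{t_1})>\phi(x_{t_0})$ for some $0<t_0<t_1$, let $(a,b)\ni t_1$ be the connected component of the open set $\{t>0:\phi(x_t)>\phi(x_{t_0})\}$; then $t_0\le a<t_1$, so $x_a\in\dom{\phi}$ and $\phi(x_a)\le\phi(x_{t_0})<\phi(x_r)$ for all $r\in(a,b)$. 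Testing \eqref{eq:evi_diff} at $x=x_a$ and writing $q(r):=c(x_a,x_r)\ge0$ with $q(a)=0$, one gets $\frac{\de^+}{\de r}\bigl(e^{\lambda r}q(r)\bigr)\le e^{\lambda r}\bigl(\phi(x_a)-\phi(x_r)\bigr)\le0$ on $(a,b)$, so by the comparison lemma $r\mapsto e^{\lambda r}q(r)$ is nonincreasing on $(a,b)$ and right-continuous at $a$ with value $0$, forcing $q\equiv0$ on $(a,b)$; by \eqref{eq:addass} this gives $x_r=x_a$ there, contradicting $\phi(x_r)>\phi(x_a)$. Hence $t\mapsto\phi(x_t)$ is nonincreasing; being l.s.c.\ it is then right-continuous and locally bounded, so $\phi(x_\cdot)\in L^1_{\rm loc}(\mathbb{R}_+)$. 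Applying the comparison lemma to $y(t)=e^{\lambda t}c(x,x_t)$ with $\zeta(t)=e^{\lambda t}\bigl(\phi(x)-\phi(x_t)\bigr)\in L^1_{\rm loc}$ then gives $e^{\lambda t}c(x,x_t)-e^{\lambda s}c(x,x_s)\le\int_s^t e^{\lambda r}\bigl(\phi(x)-\phi(x_r)\bigr)\de r\le e^{\lambda s}E_\lambda(t-s)\bigl(\phi(x)-\phi(x_t)\bigr)$, the last step by monotonicity; dividing by $e^{\lambda s}$ yields \eqref{eq:evi_exp_int}. Since $c\ge0$, the same displayed bound also shows $c(x,x_\cdot)$ is locally bounded, hence in $L^1_{\rm loc}$.

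\emph{From \eqref{eq:evi_exp_int} back to \eqref{eq:evi_diff}, and the consequences.} Given \eqref{eq:evi_exp_int}: fixing any $x_*\in\dom{\phi}$, it forces $\phi(x_t)\le\phi(x_*)+c(x_*,x_s)/E_\lambda(t-s)<+\infty$, so $x_t\in\dom{\phi}$ for all $t>0$; testing at $x=x_s$ gives $\phi(x_s)-\phi(x_t)\ge e^{\lambda(t-s)}c(x_s,x_t)/E_\lambda(t-s)\ge0$, so $t\mapsto\phi(x_t)$ is nonincreasing, and the same choice gives $c(x_s,x_t)\le E_{-\lambda}(t-s)\bigl(\phi(x_s)-\phi(x_t)\bigr)$, which together with monotonicity is \eqref{eq:Lip_c}. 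Letting $t\downarrow s$, resp.\ $t\uparrow s$, in this estimate and using local boundedness of $\phi(x_\cdot)$ against $E_{-\lambda}(t-s)\to0$, one gets $c(x_s,x_t)\to0$, resp.\ $c(x_t,x_s)\to0$, hence $x_t\xrightarrow[]{\sigma}x_s$ by \Cref{def:comp}(1): $x(\cdot)$ is $\sigma$-continuous, so $t\mapsto\phi(x_t)$ is l.s.c.\ — hence, being nonincreasing, right-continuous — and $t\mapsto c(x,x_t)$ is l.s.c. Finally, dividing \eqref{eq:evi_exp_int} for the pair $(t,t+h)$ by $h$, using $E_\lambda(h)/h\to1$ and right-continuity of $t\mapsto\phi(x_t)$, and taking $\limsup_{h\downarrow0}$, I recover $\frac{\de^+}{\de t}c(x,x_t)+\lambda c(x,x_t)\le\phi(x)-\phi(x_t)$, i.e.\ \eqref{eq:evi_diff}. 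The one point requiring care is that $\tfrac{e^{\lambda h}-1}{h}\,c(x,x_{t+h})$ must tend to $\lambda c(x,x_t)$: for $\lambda\ge0$ the needed one-sided bound already follows from lower semicontinuity of $c(x,\cdot)$, while for $\lambda<0$ the factor is negative and one genuinely needs $c(x,x_{t+h})\to c(x,x_t)$, which is precisely where the extra hypothesis that $c$ be separately $\sigma$-continuous is consumed.

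\emph{Equivalence with \eqref{eq:evi_int}.} That \eqref{eq:evi_diff} implies \eqref{eq:evi_int}, with the stated integrability and measurability, follows by applying the comparison lemma to $y(t)=c(x,x_t)$ with $\zeta(t)=\phi(x)-\phi(x_t)-\lambda c(x,x_t)\in L^1_{\rm loc}$ (using the local boundedness of $c(x,x_\cdot)$ obtained above), the joint measurability of $(s,t)\mapsto c(x_s,x_t)$ being clear since this map is $\sigma$-l.s.c.\ along the $\sigma$-continuous curve. Conversely, from \eqref{eq:evi_int}, valid for all $0<s\le t$, I will deduce just as before that $t\mapsto\phi(x_t)$ is nonincreasing (test $x=x_s$ at a.e.\ $s$ with $\phi(x_s)<+\infty$, then promote $x_t\in\dom{\phi}$ to every $t$ by lower semicontinuity of $\phi$ along $x_{t_n}\xrightarrow[]{\sigma}x_t$ with $\phi(x_{t_n})$ finite and decreasing), that $x(\cdot)$ is $\sigma$-continuous, and that $t\mapsto\phi(x_t),\,c(x,x_\cdot)\in L^1_{\rm loc}$; then, dividing \eqref{eq:evi_int} on $(t_0,t_0+h)$ by $h$, using $\tfrac1h\int_{t_0}^{t_0+h}\phi(x_r)\de r\to\phi(x_{t_0})$ and $\liminf_{h\downarrow0}\tfrac1h\int_{t_0}^{t_0+h}c(x,x_r)\de r\ge c(x,x_{t_0})$ (an equality when $\lambda<0$, again by separate $\sigma$-continuity of $c$), and passing to $\limsup_{h\downarrow0}$, one obtains \eqref{eq:evi_diff}. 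This closes the cycle, and the remaining conclusions of the theorem have been collected en route. The main obstacle, as anticipated, is the monotonicity argument, together with the bookkeeping of which one-sided semicontinuity of $c(x,x_\cdot)$ is available — which is what forces the separate-continuity assumption when $\lambda<0$.
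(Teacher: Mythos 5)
Your handling of the cycle i) $\Rightarrow$ iii) $\Rightarrow$ i) and i) $\Rightarrow$ ii) is sound and in places genuinely different from the paper: in particular, your derivation of the monotonicity of $t\mapsto\phi(x_t)$ directly from the differential form \eqref{eq:evi_diff} (the connected-component contradiction argument, testing at $x=x_a$ and forcing $c(x_a,x_r)\equiv 0$ via \eqref{eq:addass}) is a nice self-contained alternative to the paper, which instead extracts monotonicity inside the implications iii) $\Rightarrow$ i) and ii) $\Rightarrow$ i) and only then proves i) $\Rightarrow$ iii). One small caution: your ``classical comparison lemma'' is stated with a bound that is merely $L^1_{\rm loc}$; in that generality the pointwise Dini bound does not control singular increase, and you should state it (as in [MuratoriSavare, Lemma A.1], which the paper uses) with an upper semicontinuous bounding function --- which is what you actually have in every application, since the bounds are built from $-\phi(x_t)$ and, for $\lambda<0$, from $c(x,x_t)$ which is then continuous in $t$ by the separate-continuity hypothesis.

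The genuine gap is in ii) $\Rightarrow$ i). You claim that monotonicity of $\tilde\phi=\phi\circ x$ follows ``just as before'' by testing $x=x_s$, but the earlier argument does not transfer: in \eqref{eq:evi_int} the right-hand side contains $\int_s^t\phi(x_r)\,\de r$, not the pointwise value $\phi(x_t)$, so testing at $x=x_s$ (which is moreover only admissible for a.e.\ $s$, since a priori $x_s\in\dom{\phi}$ only almost everywhere) yields, after discarding $c(x_s,x_t)\ge 0$, only the averaged inequality $\int_s^t\phi(x_r)\,\de r\le (t-s)\phi(x_s)$; and when $\lambda<0$ the term $\lambda\int_s^t c(x_s,x_r)\,\de r$ has the wrong sign and cannot be discarded at all. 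Passing from this averaged, a.e.-in-$s$ information to pointwise monotonicity of $\tilde\phi$ (and then to $x_t\in\dom{\phi}$ for \emph{every} $t$, right and left $\sigma$-continuity of the curve, and local boundedness of $\tilde\phi$ and of $c(x,x_\cdot)$) is exactly where the paper invests its main effort: it integrates the tested inequality in $s$ over $[t_0,t_1]$, evaluates the resulting double integral as in \eqref{eq:bound_eta}, uses Gronwall to absorb the $\lambda$-term (see \eqref{eq:afterg}) and Lebesgue differentiation to identify the limit \eqref{eq:limsup_bound}, obtaining monotonicity at Lebesgue points, and then runs separate arguments (including a second Gronwall step, \eqref{eq:nowgron}) to upgrade this to all times and to left $\sigma$-continuity. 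None of this is routine, and your one-sentence sketch (including the parenthetical promotion of $x_t\in\dom{\phi}$, which already presupposes right $\sigma$-continuity of the curve along a sequence $t_n\downarrow t$) does not supply a substitute. As written, ii) $\Rightarrow$ i) is therefore not proved; you would either need to reproduce an argument of the paper's type or find a genuinely new route from the averaged inequality to pointwise monotonicity, also covering $\lambda<0$.
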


\begin{remark}[On the EVI formulations and the estimates] Arguably, the differential \eqref{eq:evi_diff} and  the  integral \eqref{eq:evi_int} formulations of the EVI are the natural generalization of the classical formulations of the metric case  in  \cite[Theorem 3.3]{MuratoriSavare}. The one with the exponential \eqref{eq:evi_exp_int} is nevertheless very  useful  to  obtain   estimates. Its drawback is that it misses the energy identity by a factor $2$, as easily seen when taking $\lambda=0$ and $x=x_s$ in \eqref{eq:evi_exp_int}, or noticing that for $x=x_t$ the  left-hand side in non-positive.   As a corrective, for symmetric costs, the  missing  term is expressed in \eqref{eq:evi_exp_int_ref} below. Notice also that the formulation in \eqref{eq:evi_exp_int} does not depend on the topology $\sigma$. Finally, we highlight that we used the term ``local Lipschitzianity" to describe \eqref{eq:Lip_c} even if the cost $c$ may not be a distance; in the metric case, this would indeed give that the curve $x(\cdot)$ is locally Lipschitz and, in particular, locally absolutely continuous.
\end{remark}

\begin{remark}[Separate and joint lower semicontinuity] It  would   also  be  possible to formulate and prove Theorem \ref{thm:equiv}  by  assuming the cost $c$ to be merely separately $\sigma$-lower semicontinuous in the case $\lambda \ge 0$, discarding the joint measurability of $(s,t)\mapsto c(x_s,x_t)$ in \ref{it:evi_int}. Indeed, the only point where this is used is in \eqref{eq:evi_int_applied3} when writing a double integral, and the latter can be dropped using that $\lambda c\ge 0$ when $\lambda \ge 0$. In the case $\lambda<0$, one should nevertheless assume $c$ to be additionally jointly $\sigma$-continuous, in order for the full proof to work.
\end{remark}

\begin{proof} The proof  follows the blueprint of  
\cite[Theorem 3.3]{MuratoriSavare}, the main difference being that here we do not assume that $c$ is  jointly $\sigma$-continuous, but merely   jointly  
$\sigma$-l.s.c.\ and we do not use the triangle inequality. This calls for some adaptation of the argument. Along this proof, we use the short-hand notation $\tilde \phi = \phi \circ x$. Before entering  into   the details of the proof, we outline here the arguments used the achieve the different implications:
\begin{enumerate}
    \item \ref{it:evi_diff} $\Rightarrow$ \ref{it:evi_int}: this is based on a simple application of \cite[Lemma A.1]{MuratoriSavare} to transform the distributional/differential  formulation  \ref{it:evi_diff} into the integral   formulation  \ref{it:evi_int}.
    \item \ref{it:evi_exp_int} $\Rightarrow$ \ref{it:evi_diff}: we first use \eqref{eq:evi_exp_int} together with \eqref{eq:addass} and the compatibility of $c$ with $\sigma$ in \Cref{def:comp} to derive the monotonicity of $\tilde \phi = \phi \circ x  $ and the right $\sigma$-continuity of $x(\cdot)$; then the validity of \eqref{eq:evi_diff} follows directly from differentiating \eqref{eq:evi_exp_int}. Finally the left $\sigma$-continuity of $x(\cdot)$ follows again by a manipulation of \eqref{eq:evi_exp_int}, the compatibility of $c$ with $\sigma$, and the monotonicity of $\tilde \phi$.
    \item \ref{it:evi_int} $\Rightarrow$ \ref{it:evi_diff}: we repeat the first part of the proof of the implication \ref{it:evi_exp_int} $\Rightarrow$ \ref{it:evi_diff} using \eqref{eq:evi_int} instead of \eqref{eq:evi_exp_int} to show the right $\sigma$-continuity of $x(\cdot)$. Integrating \eqref{eq:evi_int}, we prove that $\tilde \phi$ is non-increasing when restricted to its Lebesgue points; the extension to all points is then obtained using the $\sigma$-lower semicontinuity of $\phi$, again \eqref{eq:evi_int}, and the joint $\sigma$-lower semicontinuity of $c$ in \Cref{def:comp}. Finally, we use the monotonicity of $\tilde \phi$ to show the left $\sigma$-continuity of $x(\cdot)$ and that \eqref{eq:evi_int} gives \eqref{eq:evi_diff}.
    \item \ref{it:evi_diff} $\Rightarrow$ \ref{it:evi_exp_int}: after multiplying \eqref{eq:evi_diff} by $e^{\lambda t}$, the resulting expression is integrated again through \cite[Lemma A.1]{MuratoriSavare} to obtain \eqref{eq:evi_exp_int}, also using that $\tilde \phi$ is not increasing, as shown in the implication \ref{it:evi_int} $\Rightarrow$ \ref{it:evi_diff}.
\end{enumerate}
We now proceed with the detailed proofs of the various implications.

  \ref{it:evi_diff} $\Rightarrow$ \ref{it:evi_int}:  Fix $s>0$ and let $\zeta(t) : =c(x,x_t)$ and $\eta(t) : =\frac{\lambda}{2}c(x,x_t)+\phi(x_t)-\phi(x)$. Both functions are lower semicontinuous, since $\phi$, $c(x,\cdot)$, and $\lambda c(x,\cdot)$ are such, and $x(\cdot)$ is $\sigma$-continuous.  Then, \eqref{eq:evi_diff} and \cite[Lemma A.1]{MuratoriSavare} gives that $\zeta,\eta\in L^1_{ \rm loc}( \R_{>0} )$ and $t\mapsto \zeta(t)+\int_s^t \eta(r) \de  r $ is non-increasing on $[s,+\infty)$ which is precisely \eqref{eq:evi_int}. The lower semicontinuity, whence the measurability of $(s,t) \mapsto c(x_s, x_t)$ follows from the $\sigma$-continuity of $x(\cdot)$ and  from  the  joint  $\sigma$-lower semicontinuity of $c$.

 \ref{it:evi_exp_int} $\Rightarrow$ \ref{it:evi_diff}:   Inequality  \eqref{eq:evi_exp_int}  in particular implies  that $\phi(x_t)<+\infty$, so  that  we can choose $x=x_s\in\dom{\phi}$ in \eqref{eq:evi_exp_int}.  Since $c(x_s,x_s)=0$  by \eqref{eq:addass},  this   gives 
\begin{equation}\label{eq:evi_int_applied}
    0\le\frac{  e^{\lambda(t-s)}}{E_\lambda(t-s)}c(x_s,x_t) \le \phi(x_s)-\phi(x_t).
\end{equation}
 This proves that  $\tilde \phi$ is non-increasing,  hence  also locally bounded  in $ \R_{>0}$. As $E_\lambda$ is continuous and $E_\lambda(0)=0$, we can take limits in \eqref{eq:evi_exp_int}, obtaining for all $t_0>0$ and $x \in \dom{\phi}$
\begin{equation*}
    \limsup_{t\downarrow t_0} c(x,x_t) \le c(x,x_{t_0})\le \liminf_{s\uparrow t_0} c(x,x_s).
\end{equation*}
 The second inequality entails that $t\mapsto c(x,x_t)$ is left $\sigma$-l.s.c.  Evaluating at $x=x_{t_0}$  gives $\limsup_{t\downarrow t_0} c(x_{t_0},x_t)=0$,  hence that $\lim_{t\downarrow t_0} c(x_{t_0},x_t)=0$. The compatibility  with $\sigma$ in \Cref{def:comp}-(1) gives that $x(\cdot)$ is right $\sigma$-continuous, whence $t\mapsto c(x,x_t)$ is also right l.s.c.,  hence l.s.c. Since $\tilde \phi$ is non-increasing, $x(\cdot)$ is right $\sigma$-continuous and $\phi$ is $\sigma$-l.s.c.,  we have that  $\tilde \phi$ is l.s.c.   As the   $\limsup$ of a sum is larger than  the  sum of  a  $\limsup$ and  a  $\liminf$, we  moreover have   that  $$\frac{\de^+}{\de t} \bigr |_{t=t_0} (e^{\lambda(t-t_0)}c(x,x_t))\ge\frac{\de^+}{\de t} \bigr |_{t=t_0}c(x,x_{t})+\liminf_{h\downarrow 0}\lambda c(x,x_{t_0+h})\quad \forall t_0>0.$$ 
  Note that we have $\liminf_{h\downarrow 0}\lambda c(x,x_{t_0+h})\ge \lambda c(x,x_{t_0})$ both for $\lambda \geq 0$, as $c$ is  jointly  $\sigma$-l.s.c., and $\lambda<0$, as $c$ is assumed to be separately $\sigma$-continuous in this case. By 
  dividing by $(t-s)$  in \eqref{eq:evi_exp_int} and taking the limit $t\downarrow s$, we  hence  obtain \eqref{eq:evi_diff}. 

 To conclude, we have to show that $x(\cdot)$ is left $\sigma$-continuous.  Letting   $t>0$, we use the non-increasingness of $\tilde \phi$ and \eqref{eq:evi_int_applied} to obtain for $s\in[t/2,t]$
 \begin{equation}\label{eq:evi_int_applied2}
    0\le c(x_s,x_t) \le \frac{E_\lambda(t-s)}{  e^{\lambda(t-s)}}(\phi(x_s)-\phi(x_{t})) \le \frac{E_\lambda(t-s)}{  e^{\lambda(t-s)}}(\phi(x_{t/2})-\phi(x_{t})).
\end{equation}
Taking the  $\limsup$ as $s\uparrow t$, we  deduce  $\lim_{s \uparrow t} c(x_s,x_t)=0$ so that, by the compatibility with $\sigma$ in \Cref{def:comp}-(1), we get that  $x_s \stackrel{\sigma}{\to} x_t$ and   $x(\cdot)$ is left $\sigma$-continuous at $t$.

 \ref{it:evi_int} $\Rightarrow$ \ref{it:evi_diff}:   First of all, by taking limits in \eqref{eq:evi_int}, for all $t_0>0$ and $x \in \dom{\phi}$, we have  that  $ \limsup_{t\downarrow t_0} c(x,x_t) \le c(x,x_{t_0})$.  Reasoning as in the previous implication, we deduce that $x(\cdot)$ is right $\sigma$-continuous. 
 Now we show that $\tilde{\phi}$ is non-increasing, first when restricted to its Lebesgue points, and then in general.

 Let $0<t_0<t_1$ be Lebesgue points for $\tilde \phi$.  For  a.e.~$s \in [t_0,t_1]$ we  write  \eqref{eq:evi_int} for $t=s+h$ and $x=x_s$. Integrating w.r.t.~$s\in [t_0,t_1]$ we obtain 
\begin{align}
    &\int_{t_0}^{t_1} c(x_s,x_{s+h})\de s+\lambda\int_{t_0}^{t_1}\int_{s}^{s+h} c(x_s,x_{ r }) \de r \de s \nonumber\\
    &\quad \le \int_{t_0}^{t_1}\int_{s}^{s+h}\left(\tilde \phi(s)- \tilde \phi(r)\right) \de r \de s \eqqcolon h^2\eta(h).    \label{eq:evi_int_applied3}
\end{align}
Now, let us work on the r.h.s.\ to relate it to $\tilde \phi(t_0)-\tilde \phi(t_1)$.  We write 
\begin{align}
    & \int_{t_0}^{t_1}\int_{s}^{s+h}\left(\tilde \phi(s)- \tilde \phi(r)\right) \de r \de s = \int_{t_0}^{t_1}\int_{0}^{h}\left(\tilde \phi(s)- \tilde \phi(s+r)\right) \de r \de s \nonumber \\
    &\quad = \int_{0}^{h} \int_{0}^{r}\left(\tilde \phi(t_0+\xi)- \tilde \phi(t_1+\xi)\right) \de \xi \de r \nonumber\\
    &\quad= \int_{0}^{h} \int_{0}^{h}\left(\tilde \phi(t_0+\xi)- \tilde \phi(t_1+\xi)\right)\chi_{[0,r]}(\xi) \de \xi \de r \nonumber \\
    &\quad= \int_{0}^{h}\left(\tilde \phi(t_0+\xi)- \tilde \phi(t_1+\xi)\right)(h-\xi)\de \xi\nonumber\\
    &\quad= h^2\int_{0}^{1}\left(\tilde \phi(t_0+h\xi)- \tilde \phi(t_1+h\xi)\right)(1-\xi)\de \xi. \label{eq:bound_eta}
\end{align}
 We set $u(h):= \int_{t_0}^{t_1} c(x_s, x_{s+h})\de s$ so that \eqref{eq:evi_int_applied3} yields
\[ u(h)+ \lambda \int_0^h u(r) \de r \le h^2 \eta(h) \quad  \forall  h>0.\]
An application of Gronwall's lemma entails
\begin{equation}\label{eq:afterg}
h^{-2}u(h) \le e^{h  \lambda^-}\sup_{ \delta \in [0,h]} \eta(\delta)    
\end{equation}
 with $ \lambda^-:=\max\{0,-\lambda\}$  
so that, passing to the $\limsup_{h \downarrow 0}$  in \eqref{eq:evi_int_applied3},  we get 
\begin{equation}\label{eq:limsup_bound}
    0 \stackrel{0\le c}{\le }  \limsup_{h\downarrow 0}\int_{t_0}^{t_1} \frac{c(x_s,x_{s+h})}{h^2}\de s \le \lim_{h \downarrow 0} \eta(h) = \frac{1}{2}(\tilde \phi(t_0)-\tilde \phi(t_1)),
\end{equation}
where the last equality follows by, e.g.,~\cite[Theorem 2.1]{realan} and the fact that $t_0$ and $t_1$ are Lebesgue points of $\tilde{\phi}$. 
Hence, $\tilde \phi$ is non-increasing when restricted  to its Lebesgue points.  However, since $\phi$ is l.s.c.and $x(\cdot)$ is right $\sigma$-continuous, for every Lebesgue point $t_0$ and any $t>t_0$  we have  that $\tilde \phi(t_0) \ge \tilde \phi(t)$.  We deduce in particular that $x_t \in \mathscr{D}(\phi)$ for every $t>0$ and that, if $s,t$ and $h>0$ are given  in such a way that   that $0<s<s+h<t$, then $\tilde \phi (t) \le \tilde \phi (t_0)$ for every Lebesgue point $t_0$ of $\tilde \phi$ in $(s,s+h)$.  This in particular, gives that   $\tilde \phi(t) \le \frac{1}{h} \int_0^h \tilde \phi(s+r) \de r$. 

 We now fix $0<s<t$ and  take any $h>0$ such that $s+h<t$.  By applying  \eqref{eq:evi_int} to $x=x_s$  and $t=s+h$,  discarding  the non-negative term $c(x_s,x_{s+h})$, and  using the above   bound  for $\tilde{\phi}(t)$  we  get 
\begin{equation*}
    \tilde{\phi}(t) - |\lambda| \int_0^1 c(x_s,x_{s+rh})\de r \le \tilde \phi(s) \quad \forall \,  0<s<s+h<t .
\end{equation*}
 Passing to $\limsup_{h \downarrow 0}$  and using the  joint   $\sigma$-lower semicontinuity of $c$ and the $\sigma$-right continuity of $x(\cdot)$, an application of Fatou's lemma (recall  that  $c$ is bounded from below) gives $\tilde \phi(t) \le \tilde \phi (s)$ for every $0 < s< t$, as desired.

 Using that $\tilde{\phi}$ is non-increasing, we show next that $x(\cdot)$ is left  $\sigma$-continuous.  Choosing  $x=x_s$ in  \eqref{eq:evi_int}  and setting   $v(t):=c(x_s,x_t)$ we obtain
\begin{equation}\label{eq:nowgron}
    v(t) \le |\lambda| \int_s^t v(r) \de r + \int_s^t (\tilde{\phi}(s)-\tilde{\phi}(r)) \de r \quad \forall \, 0< s \le t.
\end{equation}
Now fix any $t>0$ and take any $0<s_0<t<t_0$.  By applying Gronwall's lemma to \eqref{eq:nowgron} and integrating by parts we deduce 
\begin{align*}
v(t) &\le \int_s^t (\tilde{\phi}(s)-\tilde{\phi}(r))\de r + \int_s^t \int_s^r(\tilde{\phi}(s)-\tilde{\phi}(u))\de u \,|\lambda | e^{|\lambda|(t-r)} \de r \\
& \le \int_s^t (\tilde{\phi}(s_0)-\tilde{\phi}(t_0))\de r + \int_s^t \int_s^r(\tilde{\phi}(s_0)-\tilde{\phi}(t_0))\de u \,|\lambda | e^{|\lambda| (t-r)} \de r \\
& = (\tilde{\phi}(s_0)-\tilde{\phi}(t_0))\int_0^{t-s} e^{|\lambda| r} \de r 
\end{align*}
for every $s_0<s<t$.  Taking the $\limsup_{s \uparrow t}$ in  the above inequality we deduce that $\limsup_{s \uparrow t} c(x_s,x_t)\le 0$, thus giving, via the usual compatibility conditions, that $x(\cdot)$ is left $\sigma$-continuous.

Furthermore, since $\tilde \phi$ is l.s.c., we have 
\begin{equation}\label{eq:phi_lsc_proof}
    \phi(x_s) \le \liminf_{h\downarrow 0} \tilde \phi(s+h)\le \liminf_{h\downarrow 0} \frac{1}{h} \int_s^{s+h}  \tilde \phi(r) \de r.
\end{equation}
So, when dividing \eqref{eq:evi_int} by $(t-s)$ and taking the $\limsup$ for $t\downarrow s$, the r.h.s.\ can be bounded from above by $\phi(x)-\phi(x_s)$ and we obtain \eqref{eq:evi_diff} using Fatou's lemma for $\lambda \ge 0$ and the   separate   $\sigma$-continuity of $c$ and  the dominated convergence theorem for $\lambda<0$. 

 \ref{it:evi_diff} $\Rightarrow$ \ref{it:evi_exp_int}:  Multiplying \eqref{eq:evi_diff} by $e^{\lambda t}$ gives
\begin{align*}
     &\frac{\de^+}{\de t}(e^{\lambda t} c(x,x_t)) = \limsup_{h \downarrow 0 } \frac{e^{\lambda(t+h)}c(x,x_{t+h})-e^{\lambda t} c(x, x_t)}{h} \\
    &\quad= \limsup_{h \downarrow 0 } \left [e^{\lambda(t+h)} \frac{c(x,x_{t+h})-c(x,x_t)}{h} + c(x,x_t) \frac{e^{\lambda(t+h)}-e^{\lambda t}}{h} \right ] \\
& \quad\le \limsup_{h \downarrow 0 } \left [e^{\lambda(t+h)} \frac{c(x,x_{t+h})-c(x,x_t)}{h} \right ] + \limsup_{h \downarrow 0} \left [ c(x,x_t) \frac{e^{\lambda(t+h)}-e^{\lambda t}}{h} \right ] \\
& \quad= e^{\lambda t} \frac{\de^+}{\de t} c(x,x_t) + \lambda e^{\lambda t} c(x,x_t) \le e^{\lambda t} (\phi(x)-\phi(x_t)).
\end{align*}
Integrating the latter through \cite[Lemma A.1]{MuratoriSavare} and  using the fact that $\tilde \phi$  does not increase,  which is  shown in \ref{it:evi_int} $\Leftrightarrow$ \ref{it:evi_diff},  this  proves  that \eqref{eq:evi_exp_int} holds. 
\end{proof}

\begin{definition}[EVI solution]
\label{def:evisolution}  Let $\phi:X\to (-\infty, + \infty]$,  and   $c$  satisfy  \eqref{eq:addass}. A trajectory  $x:\R_{>0} \to X$ satisfying \eqref{eq:evi_exp_int} in  Theorem \ref{thm:equiv} for some $\lambda \in \R$ is called  a \emph{EVI solution} for the triplet $(X,c, \phi)$  and we  write $x(\cdot) \in \EVI_\lambda(X,c, \phi)$.  Equivalently, $x(\cdot) \in \EVI_\lambda(X,c, \phi)$ if it satisfies either \eqref{eq:evi_diff} or \eqref{eq:evi_int} for a topology $\sigma$ on $X$ which is compatible with the pair $(c, \phi)$ (with $c$ additionally separately $\sigma$-continuous, if $\lambda<0$). 
\end{definition}

\begin{remark}[Definition up to time  $t=0$]\label{rem:t0}  The equivalences of  Theorem \ref{thm:equiv} holds also for a curve $x:[0,+\infty) \to X$ down to time $t=0$, provided  that  the cost $c$ is  additionally  separately $\sigma$-continuous (and not just  jointly  $\sigma$-lower semicontinuous).   Indeed,  in this case  relation  \ref{it:evi_int} written for $0<s \le t$ is equivalent to \ref{it:evi_int} written  for  $0 \le s \le t$ which is in turn equivalent to \ref{it:evi_exp_int}  for  $0 \le s \le t$. To conclude, it is enough to observe that the implication \ref{it:evi_exp_int} $\Rightarrow$ \ref{it:evi_diff} works exactly  as above   upon replacing $t_0>0$ with $t_0 \ge 0$.

 On the basis of these observations, if $c$ is  additionally  separately $\sigma$-continuous,  one can extend the definition of EVI solutions to curves $x:[0,+\infty) \to X$ and, without introducing new notation, use the same symbol $\EVI_\lambda(X,c, \phi)$.  In particular, notice that every curve $(x_t)_{t \ge 0} \in \EVI_\lambda(X,c, \phi)$ is $\sigma$-continuous in $[0,+\infty)$.
\end{remark}

We now introduce two concepts which are the analogues in the case $c={d^2}/{2}$ to the metric derivative and to an oriented local slope.

\begin{definition}[$c$-cost derivative, oriented local slope] Let $\phi:X\to (-\infty, + \infty]$, $c$ be a symmetric cost satisfying \eqref{eq:addass}, and $x:[0,+\infty)\to X$. We  say that $x(\cdot)$ has a \emph{$c$-cost derivative} at time $t  \ge 0 $ if
    \begin{equation}\label{eq:def_c_derivative}
        |\dot x_{t+ }|_c^2:=\lim_{h\downarrow 0}\frac{2c(x_t,x_{t+h})}{h^2}
    \end{equation}
    exists and is finite. We also define the oriented local slope of $\phi$ along $x(\cdot)$ at time $t\ge 0$ as
\begin{equation}\label{eq:def_local_slope_along}
    |\partial \phi|_c(t;x(\cdot)):=\limsup_{h\downarrow 0} \frac{(\phi(x_t)-\phi(x_{t+h}))^+}{\sqrt{2 c(x_{t},x_{t+h})}}.
   \end{equation}
\end{definition}
Symmetry of $c$ proves to be a crucial requirement: in particular, it allows us to derive the $\lambda$-contraction \eqref{eq:lambda_contraction} and the energy identity \eqref{eq:energy_identity} in the proof of Theorem \ref{thm:eviprop}. Notice that symmetry is however not needed in the definition of an EVI solution. Note also that by using the EVI in the integral form \eqref{eq:evi_int} it is easy to give a lower (resp.\ upper) bound to $c(x_t,x_{t+h})$ (resp.\ $c(x_{t+h},x_t)$). Symmetry then justifies that the $c$-derivative is a limit. 

Concerning the slope, as shown in \eqref{eq:evi_int_applied3}, the EVI formulation carries a $h^2$ scaling which justifies taking a square root of $c$ when considering local quantities such as \eqref{eq:def_c_derivative} and \eqref{eq:def_local_slope_along}. Though the cost $c$ itself does not have to have be to be a square metric, if we assume (local) metric properties, then the full usual energy identity can be recovered, see \Cref{cor:metric_case}.

The following theorem is in the spirit of \cite[Theorem 3.5]{MuratoriSavare}, and its forerunner \cite[Theorem 6.9]{Danieri2014}.

\begin{theorem}[Properties of  EVI solutions]\label{thm:eviprop} Let $\lambda \in \R$, $\phi:X\to (-\infty, + \infty]$,  $c$ be a symmetric cost satisfying \eqref{eq:addass},  $\sigma$ be compatible with the pair $(c,\phi)$, let $x:\R_{>0} \to X$ belong to  $ \EVI_\lambda(X,c,\phi)$, and assume that $c$ is additionally separately $\sigma$-continuous if $\lambda<0$. Then the following holds.
\begin{itemize}[leftmargin=5mm]
    \item  {\bf $\lambda$-contraction. }Let $\tilde x(\cdot)\in \EVI_\lambda(X,c,\phi)$. Then,  
    \begin{equation}
    c(x_t,\tilde x_t) \le e^{-2\lambda (t-s)} c(x_s,\tilde x_s) \quad \forall \, 0 < s \le t. \label{eq:lambda_contraction}
    \end{equation}
    \item {\bf Energy identity and regularizing effect.}  For every $t>0$ the following right limits exist, are finite, and are equal:
    \begin{equation}\label{eq:energy_identity}
    \frac{\de}{\de t}\phi(x_{t+})= -  |\dot x_{t+ }|_c^2.   \end{equation}
     Moreover the map $t\mapsto e^{2\lambda t}|\dot x_{t+}|^2_c$ is non-increasing and the map $t \mapsto \phi(x_t)$ is continuous and (locally semi-, if $\lambda <0$) convex in  $\R_{>0}$.  As a consequence, $t \mapsto \phi(x_t)$ is differentiable outside a countable set $\mathcal{T} \subset  \R_{>0}$. Moreover it holds
     \begin{equation}\label{eq:locorsl}
            |\dot x_{t+ }|_c = |\partial \phi|_c(t;x(\cdot)) \quad \forall \, t>0.
     \end{equation}
    \item {\bf A priori estimates}. For every $x\in\dom{\phi}$, we have the following refinement of \eqref{eq:evi_exp_int}:  
    \begin{align}
    e^{\lambda(t-s)}c(x,x_t)-c(x,x_s) + \frac{(E_\lambda(t-s))^2}{2}|\dot x_{t+ }|_c^2 
    \le E_\lambda(t-s)\left(\phi(x)-\phi(x_t)\right) \quad \forall \, 0 < s \le t.    \label{eq:evi_exp_int_ref}
    \end{align}
    
    \item {\bf Asymptotic behaviour as $t\to +\infty$.} Assume that $\phi$ is bounded from below. If $\lambda>0$ and either {\it i)}
$\sigma$ is Cauchy-compatible with $(c,\phi)$ or {\it ii)} the sublevel sets $\phi$ are  $\sigma$-compact,  then $\phi$ has a unique minimum point $\x$ and
    \begin{align}
     &\lambda c(\x,x_t)\le \phi(x_t)-\phi(\x) \le \frac{\lambda c(\x,x_{t_0})}{e^{\lambda (t-t_0)}-1} \text{ and} \nonumber\\
    &c(\x,x_t)\le c(\x,x_{t_0})e^{- 2 \lambda(t-t_0)} \quad  \forall \, 0 < t_0  <  t. \label{eq:asympt_lambda}
    \end{align}
    If $\lambda=0$ and there exists a minimizer $\x$ of $\phi$, then $t \mapsto c(\x,x_t)$ is non-increasing and
    \begin{equation}
        \phi(x_t)-\phi(\x) \le \frac{c(\x,x_{t_0})}{t-t_0} \quad  \forall \, 0 < t_0 < t. \label{eq:asympt_lambda0}
    \end{equation}
   For $\lambda\ge0$  and $\x$ a minimizer of $\phi$, we  furthermore have,  for every $0<t_0 < t$, that
    \begin{equation}
    2\lambda(\phi(x_t)-\phi(\x)) \le |\dot x_{t+ }|_c^2, \quad |\dot x_{t+ }|_c \le |\dot x_{t_0+ }|_c e^{-\lambda(t-t_0)}, \quad  |\dot x_{t+ }|_c\le \frac{ \sqrt{2c(\x,x_{t_0})}}{E_\lambda(t-t_0)}.\label{eq:asympt_lambda_deriv}
    \end{equation}
     If $\lambda=0$, $\phi$ has  $\sigma$-compact sublevel sets,  and $c$ is  jointly  $\sigma$-continuous,  then $x_t$ $\sigma$-converges to a minimizer of $\phi$  as  $t\to +\infty$.
\end{itemize}

\end{theorem}

    \begin{remark}[Time $t=0$] Following up on Remark \ref{rem:t0}, we  briefly discuss the possibility of extending  
    some of the results of Theorem \ref{thm:eviprop}  to the case of  an EVI  solution  $x:[0,+\infty) \to X$ 
    defined  at time $t=0$,  as well.  Recall that, in this case, the cost function $c$ is assumed to be  additionally  separately $\sigma$-continuous in order for the three formulations in Theorem \ref{thm:equiv} (written down to time $t=0$) to be equivalent.  We have the following: 
    \begin{enumerate}
    \item  ($\lambda$-contractivity and uniqueness)  If $\tilde{x}: [0,+\infty) \to X$ belongs to $\EVI_\lambda(X,c,\phi)$ and $c$ is jointly $\sigma$-continuous at $(x_0, \tilde{x}_0)$,  the contractivity estimate  \eqref{eq:lambda_contraction} holds also for $0 \le s \le t$. In particular, if $x_0\in \dom{\phi}$ and $c$ is jointly $\sigma$-continuous at $(x_0, x_0)$, there exists at most one EVI solution starting from $x_0$.
    \item (Stability w.r.t.~the initial condition) If $x^n:[0,+\infty) \to X$ belong to $\EVI_\lambda(X,c,\phi)$, $ x^n_0\stackrel{\sigma}{\to}x_0$,  and $c$ is jointly $\sigma$-continuous at the points $(x_0^n, x_0)$ for every $n \in \N$,  then  $x^n_t \stackrel{\sigma}{\to}x_t$ for all  $t \ge 0$. This  directly  follows from \eqref{eq:lambda_contraction} written  for $x^n(\cdot)$ and $x(\cdot)$  at times $0=s\le t$.
        
    \end{enumerate}
        
    \end{remark}

\begin{proof}[Proof of Theorem \ref{thm:eviprop}] 
    \tb{\bf  $\lambda$-contraction.}  Fix  $0 < s \le t$. We write \eqref{eq:evi_exp_int} for  $x_t$ with test point $ x=\tilde{x}_t$, we multiply it by $\rme^{\lambda(t-s)}$ and, using the symmetry of $c$, we sum it to \eqref{eq:evi_exp_int} written for $\tilde{x}_t$  with test point $ x= x_s$. The terms $e^{\lambda(t-s)}c(\tilde{x}_t, x_s)$ compensate by symmetry and we obtain
    \begin{align*}
    &\rme^{2\lambda(t-s)} c(\tilde{x}_t, x_t) - c(x_s, \tilde{x}_s) \\
    &\quad \le \mathrm{E}_\lambda(t-s) \left (\phi(x_s)-\phi(x_t) \right ) + \mathrm{E}_\lambda(t-s) \left ( e^{\lambda(t-s)}-1 \right ) \left ( \phi(\tilde{x}_t) - \phi(x_t) \right )
    \end{align*}
    for every $0  <  s \le t < + \infty$. Reversing the roles of  $x(\cdot)$ and $\tilde{x}(\cdot)$,  summing up and multiplying by $\rme^{2\lambda s}$ we get
   \[ 2 \rme^{2\lambda t} c(x_t, \tilde{x}_t) - 2\rme^{2\lambda s} c(x_s, \tilde{x}_s) \le \rme^{2\lambda s} \mathrm{E}_\lambda(t-s) \left ( \phi(x_s)-\phi(x_t) + \phi(\tilde{x}_s)-\phi(\tilde{x}_t) \right )\]
    for every $0 \le s \le t < + \infty$. We  now  fix $s>0$,  divide by $t-s>0$,  and  pass to the $\limsup$ as $t \downarrow s$. Observing that $\lim_{r \downarrow 0}\mathrm{E}_\lambda(r)/r=1$ and using that $\tilde \phi$ is $\sigma$-l.s.c. so that the r.h.s.\ is nonpositive in the limit, we obtain
    \[ \frac{\de^+}{\de s} \rme^{2\lambda s} c(x_s, \tilde{x}_s) \le 0 \quad \text{ for every } s >0.\]
    An application of \cite[Lemma A.1]{MuratoriSavare} gives  \eqref{eq:lambda_contraction}.\\

    \tb{\bf Energy identity and Regularizing effect.} We proceed to prove the energy identity, which will be achieved through the following roadmap given by, for every $t>0$, 
    \begin{gather*}
         \limsup_{h\downarrow 0}\frac{2c(x_{t},x_{t+h})}{h^2} \stackrel{\eqref{eq:bound_limsup_lowerDini}}{\le}  -\frac{\de}{\de t^+}\phi(x_{t}) \stackrel{\eqref{eq:energy_bound_local2}}{\le} \limsup_{h\downarrow 0}\frac{2c(x_{t},x_{t+h})}{h^2}, \\
         -\frac{\de^+}{\de t}\phi(x_{t})+\frac{1}{2}\frac{\de}{\de t^+}\phi(x_{t}) \stackrel{\eqref{eq:energy_bound_local_liminf}}{\le}  \liminf_{h\downarrow 0}\frac{c(x_{t},x_{t+h})}{h^2},\\
         \frac{\de^+}{\de t}\phi(x_{t}) = \frac{\de}{\de t^+}\phi(x_{t}).
    \end{gather*}
The results \eqref{eq:energy_bound_local2}, \eqref{eq:bound_limsup_lowerDini},  and  \eqref{eq:energy_bound_local_liminf} will be derived directly from the EVI. The first line gives that $\frac{\de}{\de t_0^+}\phi(x_{t_0})$ is equal to $-\limsup_{h\downarrow 0}\frac{2c(x_{t_0},x_{t_0+h})}{h^2}$. The equality between the Dini derivatives is a consequence of the fact that if a Dini derivative of a continuous
function is non-decreasing, then the function is convex, hence left- and right-differentiable at each point in the interior of its domain, see, e.g.,~\cite[Lemma 1]{BruOstr62}. So we will also need to show the continuity of $\tilde{\phi}=\phi \circ x$.\\

\textit{Finiteness of $\limsup_{h\downarrow 0}\frac{2c(x_{t},x_{t+h})}{h^2}$.} We start from \eqref{eq:afterg} in the  proof of the implication \ref{it:evi_int} $\Rightarrow$ \ref{it:evi_diff} in Theorem \ref{thm:equiv}: recalling the definition of $\eta$ and $u$ in \eqref{eq:evi_int_applied3} and below \eqref{eq:bound_eta}, respectively,  one has  that for every $0< t_0 < t_2$ and $h>0$ it holds, for $ \lambda^-:=\max\{0,-\lambda\}$,
    \begin{align*}\int_{t_0}^{t_2} \frac{c(x_s,x_{s+h})}{h^2} \de s &\le e^{h \lambda^-} \sup_{0 \le \delta \le h} \int_0^1 \left (\tilde{\phi}(t_0+\delta \xi)-\tilde{\phi}(t_2+\delta \xi) \right)(1-\xi) \de \xi \\
    & \le e^{h \lambda^-} \frac{1}{2}(\tilde{\phi}(t_0)-\tilde{\phi}(t_2+h)), 
    \end{align*}
    where we used that $\tilde{\phi}$ is non-increasing. Note that \eqref{eq:afterg} is written for Lebesgue points of $\tilde{\phi}$ but it holds in general (the Lebesgue property is used only later in \eqref{eq:limsup_bound}). Now let $0<t_0<t_1$ and, for any $t_2 \in (t_0, t_1)$ and $h>0$, we  bound from below the left-hand side of  the above inequality using the contractivity \eqref{eq:lambda_contraction} for the curves $r \mapsto x_r$, $r \mapsto \tilde{x}_r:= x_{r+h}$ at times $t_1>s$ to get
    \begin{equation}\label{eq:tobediv}
       \frac{c(x_{t_1+h}, x_{t_1})}{h^2} \int_{t_0}^{t_2} e^{2\lambda(t_1-s)} \de s  \le \int_{t_0}^{t_2} \frac{c(x_s,x_{s+h})}{h^2}\de s \le \frac{e^{h\lambda^-}}{2}(\tilde{\phi}(t_0)-\tilde{\phi}(t_2+h)).
    \end{equation}
    Hence, taking a $\limsup$ as $h \downarrow 0$  by   keeping $t_0, t_1$ and $t_2$ fixed, we get that 

\begin{equation}\label{eq:isfiniteright}
\limsup_{h\downarrow 0}\frac{2c(x_{t},x_{t+h})}{h^2}<+\infty \quad \text{ for every } t>0.
\end{equation}
We can derive the same inequality for the left derivatives: let $s>0$ and, for every $h \in (0,s/2)$, apply the contractivity \eqref{eq:lambda_contraction} to the curves $r \mapsto x_r$ and $r \mapsto x_{r+h}$ at times $s-h>s/2$ to obtain
\[
c(x_{s-h}, x_s) \le e^{-2\lambda(s/2-h)} c(x_{s/2}, x_{s/2+h}).
\]
Dividing by $h^2$ and taking a $\limsup$ as $h \downarrow 0$, and using \eqref{eq:isfiniteright}, we get
\begin{equation}\label{eq:isfiniteleft}
    \limsup_{h \downarrow 0} \frac{c(x_{s-h}, x_s)}{h^2} \le e^{-\lambda s} \limsup_{h \downarrow 0} \frac{c(x_{s/2}, x_{s/2+h})}{h^2} < + \infty \quad \text{ for every } s>0.
\end{equation}

\textit{Continuity of $\tilde{\phi}$.} Since we already know that $\tilde{\phi}$ is right continuous and lower semicontinuous, to prove the continuity $\tilde{\phi}$, it is enough to show that
\begin{equation}\label{eq:leftcont}
    \limsup_{s \uparrow t} \tilde{\phi}(s) \le \tilde{\phi}(t) \quad \text{ for every } t>0.
\end{equation}
Let $0<r<s<t$ and let us write \eqref{eq:evi_exp_int} for times $r<s$ and $x=x_t$:
\[
\frac{e^{\lambda(s-r)}c(x_t, x_s) - c(x_t, x_r)}{E_\lambda(s-r)} + \tilde{\phi}(s) \le  \tilde{\phi}(t).
\]
Dividing and multiplying the l.h.s by $(t-r)^2$ and taking a $\limsup$ as $s \uparrow t$, we obtain
\[
-\frac{(t-r)^2}{E_\lambda(t-r)} \frac{c(x_t, x_r)}{(t-r)^2} + \limsup_{s \uparrow t}\tilde{\phi}(s) \le\tilde{\phi}(t),
\]
where we have used that $\lim_{s \to t} c(x_t, x_s)=0$ by \eqref{eq:Lip_c}. Passing to the $\liminf$ as $r \uparrow t$ and using \eqref{eq:isfiniteleft} gives \eqref{eq:leftcont}.  This proves that $\tilde \phi=\phi \circ x$ is continuous.  \\

We now  show some  inequalities on the Dini derivatives of $\tilde{\phi}$. We start again from \eqref{eq:tobediv}: taking first a $\limsup$ as $h \downarrow 0$, dividing by $(t_2-t_0)$, and  then taking the   $\liminf$ as $t_2 \downarrow t_0$ we get
\begin{equation}\label{eq:weakbound}
    e^{2\lambda t_1} \limsup_{h \downarrow 0} \frac{c(x_{t_1+h}, x_{t_1})}{h^2} \le - \frac{e^{2\lambda t_0}}{2} \frac{\de^+}{\de t_0} \tilde \phi (t_0) \quad \text{ for every } 0< t_0 < t_1.
\end{equation}
We write \eqref{eq:evi_int} for $x=x_{t+h}$ and times $0<t<t+h$, $h>0$, and we use \eqref{eq:Lip_c} to get
    \begin{align}
       \frac{c(x_{t+h},x_t)}{h^2} &\ge \int_0^1 \frac{\phi(x_{t+hr}) -\phi(x_{t+h})}{h} \de r - |\lambda | \int_0^1 \frac{c(x_{t+h},x_{t+rh})}{h} \de r \\
       & \ge \int_0^1 \frac{\phi(x_{t+hr}) -\phi(x_{t+h})}{h} \de r - |\lambda | (\phi(x_t)-\phi(x_{t+h})) \int_0^1 \frac{e^{-\lambda h(1-r)}-1}{-\lambda h} \de r \\ \label{eq:reveq}
       & = \int_0^1 \frac{\phi(x_{t+hr}) -\phi(x_{t+h})}{h} \de r -\frac{|\lambda |}{2} (\phi(x_t)-\phi(x_{t+h}))(1+o(1))
    \end{align} 
    as $h \downarrow 0$. We can thus pass to the $\limsup$ as $h \downarrow 0$ discarding thus through lower semicontinuity the non-negative term in $|\lambda |$. Using the inequality $\limsup (a_n+b_n) \ge \limsup a_n + \liminf b_n$, we obtain 
    \begin{align}\nonumber
       \limsup_{h \downarrow 0} \frac{ c(x_t,x_{t+h})}{h^2} &\ge \limsup_{h \downarrow 0}\int_0^1 \frac{\phi(x_{t+hr})-\phi(x_t)+\phi(x_t) -\phi(x_{t+h})}{h} \de r \\
       &\ge \limsup_{h \downarrow 0} \frac{\phi(x_t) -\phi(x_{t+h})}{h}+\liminf_{h \downarrow 0}\int_0^1 \frac{\phi(x_{t+hr})-\phi(x_t)}{hr} r\de r\nonumber\\
       &\ge -\frac{\de}{\de t^+}\phi(x_{t})+\frac{1}{2}\frac{\de}{\de t^+}\phi(x_{t})=-\frac{1}{2}\frac{\de}{\de t^+}\phi(x_{t})\nonumber \\
       &\ge -\frac{1}{2}\frac{\de^+}{\de t}\phi(x_{t}),\label{eq:energy_bound_local2} 
    \end{align}
    where, to pass from the second to the third line, we have used the following inequality
\begin{equation}\label{eq:pierre}
\liminf_{h \downarrow 0}\int_0^1 \frac{\phi(x_{t+hr})-\phi(x_t)}{hr} r\de r  \ge \frac{1}{2} \frac{\de}{\de t^+}\phi(x_{t}).
\end{equation}
To prove  \eqref{eq:pierre}, we cannot use Fatou's lemma since the integrand is nonpositive. Instead let us denote the  r.h.s.\  by $\frac{1}{2} \kappa \in [-\infty, 0]$.  If   $\kappa = -\infty$, there is nothing to prove, so suppose $\kappa >-\infty$.  For every $\eps>0$, by  the very  definition of $\liminf$ in $\frac{\de}{\de t^+}$, we  find  $\delta_\eps>0$ such that
\[ \frac{\phi(x_{t+h'})-\phi(x_t)}{h'} \ge \kappa -\eps \quad \text{ for every }  h' \in (0, \delta_\eps).\]
In particular
\[ \frac{\phi(x_{t+hr})-\phi(x_t)}{hr} r \ge r(\kappa -\eps) \quad \text{ for every } h \in (0, \delta_\eps), \, r \in (0,1),\]
so that
\[ \int_0^1 \frac{\phi(x_{t+hr})-\phi(x_t)}{hr} r\de r  \ge \frac{1}{2} (\kappa - \eps) \quad \text{ for every } h \in (0, \delta_\eps).\]
Passing to the $\liminf$ as $h \downarrow 0$ and then to the limit as $\eps \downarrow 0$, gives \eqref{eq:pierre}. Multiplying \eqref{eq:energy_bound_local2} by $e^{2\lambda t}$ and using \eqref{eq:weakbound} we get
\begin{equation}
    -\frac{e^{2\lambda t_0}}{2} \frac{\de^+}{\de t_0} \tilde{\phi}(t_0) \stackrel{\eqref{eq:weakbound}}{\ge} e^{2\lambda t_1} \limsup_{h \downarrow 0} \frac{c(x_{t_1+h}, x_{t_1})}{h^2} \stackrel{\eqref{eq:energy_bound_local2}}{\ge}  -\frac{e^{2\lambda t_1}}{2} \frac{\de^+}{\de t_1} \tilde{\phi}(t_1) \quad \text{ for every } 0<t_0< t_1.
\end{equation}
This shows that the map
\[
t \mapsto e^{2\lambda t} \frac{\de^+}{\de t} \tilde{\phi}(t) \in [ - e^{2\lambda t}\limsup_{h \downarrow 0} \frac{c(x_{t+h}, x_{t})}{h^2}, 0]
\]
is non-decreasing and always taking finite values by \eqref{eq:isfiniteright}. Fix $\delta>0$  and  note that the continuous function
\[
u_\delta(t):= e^{2\lambda t} \tilde{\phi}(t) - 2\lambda \int_\delta^t \tilde{\phi}(s) e^{2\lambda s} \de s, \quad t \in (\delta, +\infty)
\]
satisfies
\[
\frac{\de^+}{\de t} u_\delta(t) = e^{2\lambda t} \frac{\de^+}{\de t} \tilde{\phi}(t) \quad \text{ for every } t \in (\delta, +\infty).
\]
By ~\cite[Lemma 1]{BruOstr62}  we get that $u_\delta$ is convex in $(\delta, +\infty)$ so that it is right-differentiable in $(\delta, +\infty)$ by the same lemma.  In particular, $\tilde{\phi}$ is right-differentiable in $( 0 , +\infty)$ since $\delta>0$ was arbitrary. This also show that $\tilde{\phi}$ is (locally semi-, if $\lambda <0$) convex in  $\R_{>0}$ since $u_\delta$ is convex.

\medskip
Now let $t_0, h>0$ and write \eqref{eq:evi_int} with $s=t_0 < t_0+h =t$ and $x=x_{t_0}$ to get
\begin{equation}\label{eq:lowerder}
    \frac{c(x_{t_0},x_{t_0+h})}{h^2} + \frac{\lambda}{h^2} \int_{t_0}^{t_0+h} c(x_{t_0}, x_r) \de r
    \le \frac{1}{h^2} \int_{t_0}^{t_0+h} \left (\tilde{\phi}(t_0)-\tilde{\phi}(r) \right) \de r = \int_0^1 \frac{\phi(x_{t_0})-\phi(x_{t_0+hr})}{hr} r\de r.
    \end{equation}
Observe that
\[
\int_{t_0}^{t_0+h} \frac{c(x_{t_0}, x_{r})}{h^2} \de r
= \int_{0}^1 \frac{c(x_{t_0}, x_{t_0+sh})}{h} \de s.
\]
Thanks to \eqref{eq:tobediv}, the integrand is uniformly bounded  independently of $h$.  An application of the reversed Fatou's lemma thus gives
\begin{equation}\label{eq:thelastplease} 
     \limsup_{h \downarrow 0} \int_{t_0}^{t_0+h} \frac{c(x_{t_0}, x_{r})}{h^2} \de r \le \int_0^1 \limsup_{h \downarrow 0} \frac{c(x_{t_0}, x_{t_0+sh})}{h} \de s =0,
\end{equation}
where we  also  used \eqref{eq:isfiniteright} to conclude that the limit inside the integral is $0$. Combining \eqref{eq:thelastplease} with the non-negativity of $c$ then gives
\begin{equation*}
    0=\lim_{h \downarrow 0} \int_{t_0}^{t_0+h} \frac{c(x_{t_0}, x_{r})}{h^2} \de r.
\end{equation*}
Using \eqref{eq:thelastplease} and applying the same argument as in \eqref{eq:pierre} to the r.h.s.\ of \eqref{eq:lowerder} give, taking a $\limsup$ as $h \downarrow 0$ in \eqref{eq:lowerder}, that
    \begin{equation}\label{eq:bound_limsup_lowerDini}
    \limsup_{h \downarrow 0}\frac{c(x_{t_0},x_{t_0+h})}{h^2}
    \le \limsup_{h \downarrow 0} \int_0^1 \frac{\phi(x_{t_0})-\phi(x_{t_0+hr})}{hr} r\de r \le -\frac{1}{2} \frac{\de}{\de t_0^+}\tilde{\phi}(t_0).
    \end{equation}
However, since $\tilde{\phi}$ has been  already checked  to be right-differentiable, we can replace the lower right Dini derivative with the upper one, thus giving 
    \begin{equation}\label{eq:thefirst}
        -\frac{\de^+}{\de t_0}\phi(x_{t_0}) \ge \limsup_{h\downarrow 0}\frac{2c(x_{t_0},x_{t_0+h})}{h^2} \quad \text{ for every } t_0>0.
    \end{equation}
Adding \eqref{eq:thefirst} to the chain of  inequalities  \eqref{eq:energy_bound_local2} implies that \eqref{eq:thefirst} is actually an equality.  The same computations  that led to  \eqref{eq:energy_bound_local2},  repeated with  $\liminf$ instead of $\limsup$ (and thus using the inequality $\liminf (a_n+b_n) \ge \liminf a_n + \liminf b_n$) yield 
\begin{align}\nonumber
       \liminf_{h \downarrow 0} \frac{ c(x_t,x_{t+h})}{h^2}
       &\ge \liminf_{h \downarrow 0} \frac{\phi(x_t) -\phi(x_{t+h})}{h}+\liminf_{h \downarrow 0}\int_0^1 \frac{\phi(x_{t+hr})-\phi(x_t)}{hr} r\de r\nonumber\\
       &\ge -\frac{\de^+}{\de t}\phi(x_{t})+\frac{1}{2}\frac{\de}{\de t^+}\phi(x_{t})=-\frac{1}{2}\frac{\de^+}{\de t}\phi(x_{t})\label{eq:energy_bound_local_liminf} 
    \end{align}
where the last equality is due to $\tilde{\phi}$ having right-derivatives. This  implies that
$\lim_{h \downarrow 0 } \frac{c(x_t,x_{t+h})}{h^2}$  exists at all $t>0$  and \eqref{eq:energy_identity}  holds. \\

    \tb{\bf Oriented local slope.} We now prove \eqref{eq:locorsl}; fix $t>0$. We prove separately the two inequalities, starting with $|\partial \phi|_c(t;x(\cdot)) \le |\dot x_{t+ }|_c$. If $|\dot x_{t+ }|_c=0$, since $s\mapsto e^{\lambda s}|\dot x_{s+ }|_c$ is non-increasing, the $c$-derivative remains zero  at   times larger than $t$. The energy identity \eqref{eq:energy_identity} and the continuity of $s \mapsto \phi(x_s)$ give that $\phi$ is also constant in $[t,\infty)$; this gives that $|\partial \phi|_c(t;x(\cdot))=0$. Now assume that $|\dot x_{t+ }|_c>0$; in particular, we can assume that $c(x_{t+h}, x_t)>0$ for $h>0$ sufficiently small. We start from \eqref{eq:reveq}: 
        \begin{align*}
       \frac{c(x_{t+h},x_t)}{h^2} \ge \int_0^1 \frac{\phi(x_{t+hr}) -\phi(x_{t+h})}{h} \de r -\frac{|\lambda |}{2} (\phi(x_t)-\phi(x_{t+h}))(1+o(1)).
    \end{align*} 
    We multiply by $2h$ and divide by $\sqrt{2c(x_{t+h},x_t)}$, then take a $\limsup_{h \downarrow 0}$. Using the continuity of $\tilde \phi$ and that $|\dot x_{t+ }|_c$ is a limit, we get that
       \begin{align}\nonumber
       |\dot x_{t+ }|_c&=\limsup_{h \downarrow 0} \frac{ \sqrt{2c(x_{t+h},x_t)}}{h} \\
       &\ge 2\limsup_{h \downarrow 0}\int_0^1 \frac{\phi(x_{t+hr})-\phi(x_t)+\phi(x_t) -\phi(x_{t+h})}{\sqrt{2c(x_{t+h},x_t)}} \de r \nonumber \\
       &\ge 2\limsup_{h \downarrow 0} \frac{\phi(x_t) -\phi(x_{t+h})}{\sqrt{2c(x_{t+h},x_t)}}+2\liminf_{h \downarrow 0}\int_0^1 \frac{\phi(x_{t+hr})-\phi(x_t)}{hr} \frac{h}{\sqrt{2c(x_{t+h},x_t)}}r\de r\nonumber\\
       &\ge 2\limsup_{h \downarrow 0} \frac{\phi(x_t) -\phi(x_{t+h})}{\sqrt{2c(x_{t+h},x_t)}} -2\frac{|\dot x_{t+ }|_c^2}{2}\frac{1}{|\dot x_{t+ }|_c}= 2|\partial \phi|_c(t;x(\cdot))-|\dot x_{t+ }|_c.\label{eq:energy_bound_local2_slope} 
    \end{align}
    Hence,
        \begin{equation*}
        |\partial \phi|_c(t;x(\cdot)) \le |\dot x_{t+ }|_c.
    \end{equation*}

    We now proceed like in \cite[eq.(3.33)--(3.34)]{MuratoriSavare} to show the reverse inequality. We fix $\epsilon>0$ and we take $h$ small enough so that for all $r\in[0,1]$ it holds
    \begin{equation*}
        \frac{\phi(x_{t}) -\phi(x_{t+hr})}{\sqrt{2 c(x_{t},x_{t+hr})}} \le |\partial \phi|_c(t;x(\cdot))+\epsilon, \quad \frac{\sqrt{2 c(x_{t},x_{t+hr})}}{h} \le |\dot x_{t+ }|_c+\epsilon, \quad \sqrt{c(x_t, x_{t+hr})} <  \epsilon
    \end{equation*}
    We write \eqref{eq:evi_int} for $x=x_{t}$ at times $t$ and $t+h$, and we divide by $h^2$. Consequently, for $h$ small enough, we have
    \begin{align*}
       \frac{c(x_{t+h},x_t)}{h^2} &\le \int_0^1 \frac{\phi(x_{t}) -\phi(x_{t+hr})}{h} \de r - \lambda  \int_0^1 \frac{c(x_{t},x_{t+hr})}{h} \de r \\
       & = \int_0^1 \frac{\phi(x_{t}) -\phi(x_{t+hr})}{\sqrt{2 c(x_{t},x_{t+hr})}} \frac{\sqrt{2 c(x_{t},x_{t+hr})}}{h}\de r - \lambda  \int_0^1 \sqrt{c(x_{t},x_{t+hr})}\frac{\sqrt{c(x_{t},x_{t+hr})}}{h} \de r\\
       & \le (|\partial \phi|_c(t;x(\cdot))+\epsilon) (|\dot x_{t+ }|_c+\epsilon) + \lambda^-\epsilon (|\dot x_{t+ }|_c+\epsilon).
    \end{align*} 
    Taking the limit $h\downarrow 0$ and then $\epsilon \downarrow 0$, this results in
    \begin{equation*}
        |\dot x_{t+ }|_c^2 \le |\partial \phi|_c(t;x(\cdot)) |\dot x_{t+ }|_c.
    \end{equation*}
    Consequently the local slope along the curve is equal to the $c$-derivative, i.e.\ $|\partial \phi|_c(t;x(\cdot))=|\dot x_{t+ }|_c$.\\

    \tb{\bf A priori estimates.}  To show \eqref{eq:evi_exp_int_ref}  let us start by  noting   that, by taking derivatives in $\tau$ and using the definition of $E_\lambda$, for $\tau \ge 0$ we have
\begin{equation*}
    \frac{(E_{\lambda}(\tau))^2}{2}e^{-2\lambda \tau}=\frac{(E_{-\lambda}(\tau))^2}{2}=\frac{1}{2} \left(\int_0^\tau e^{-\lambda r} \de r\right)^2= \int_0^\tau E_{-\lambda}(r) e^{-\lambda r} \de r.
\end{equation*}
     Hence, using the  fact that $t\mapsto e^{\lambda t}|\dot x_{t+ }|_c$  is non-increasing on $\R_{>0}$,  the   energy identity  \eqref{eq:energy_identity},   the almost everywhere differentiability of  $t \mapsto \phi(x_t)$, and  by  integrating by parts, we obtain
    \begin{align*}
        \frac{(E_\lambda(t-s))^2}{2}|\dot x_{t+ }|_c^2
         &\le \int_0^{t-s} E_{-\lambda}(r) e^{-\lambda r} e^{2\lambda r}|\dot x_{(s+r)+ }|_c^2 \de r \\
         & \hspace{-2.2cm} \stackrel{\eqref{eq:energy_identity}}{ = }  -\int_0^{t-s} E_{-\lambda}(r) e^{\lambda r}\left( \frac{\de}{\de r}\phi(x_{(s+r)})\right) \de r\\
        &\hspace{-2cm}=\int_0^{t-s}  e^{\lambda r} \phi(x_{(s+r)}) \de r- E_{-\lambda}(t-s) e^{\lambda (t-s)}\phi(x_t)\\
        &\hspace{-2cm}=\int_s^{t} e^{\lambda (r-s)}\left(\phi(x_r)-\phi(x_t)\right) \de r \\
         &\hspace{-2.15cm} \stackrel{\eqref{eq:evi_diff}}{\le }  -\int_s^t \left(e^{\lambda (r-s)} c(x_r,x)\right )' \de r + \int_s^t \left(e^{\lambda (r-s)}(\phi(x)-\phi(x_t) \right ) \de r \\
        &\hspace{-2cm} = {}-e^{\lambda(t-s)}c(x,x_t)+c(x,x_s) + E_\lambda(t-s)\left(\phi(x)-\phi(x_t)\right).
    \end{align*}\\
    
    \tb{\bf Asymptotic behaviour as $t\to +\infty$.} Since $t\mapsto \phi(x_t)$  is non-increasing and bounded from below, it converges to some $A\in\R$  as $t \to + \infty$ and   $\{x_t\}_{t >0}$  is contained in a sublevel set of $\phi$.  In the case  $\lambda>0$,  taking  $0<s\le t$ \eqref{eq:evi_exp_int} reads 
    \begin{equation}\label{eq:cv_infty2}
        0 \le c(x,x_t)\le e^{-\lambda (t-s)}E_\lambda(t-s)\big(\phi(x)-\phi(x_t)
        \big)+e^{-\lambda (t-s)}c(x,x_s)  \quad \forall  x \in \dom{\phi}.
    \end{equation}
    Hence, for $x=x_s$ we obtain 
    \begin{equation}\label{eq:cv_infty}
        0 \le c(x_s,x_t)\le \frac{e^{\lambda(t-s)}-1}{\lambda e^{\lambda(t-s)}}(\phi(x_s)-\phi(x_t))\le \frac{|\phi(x_s)-\phi(x_t)|}{\lambda} . 
    \end{equation}
      Since $c$ is symmetric, the inequality holds for any  $t,s \ge 0$.  If i) $\sigma$ is Cauchy-compatible, as $\phi(x_s)$ converges, it is Cauchy, thus by \eqref{eq:cv_infty} $x(\cdot)$ converges to some $\x \in X$  as  $t\to +\infty$. 
    Otherwise,  let us assume that  ii) the sublevel sets of $\phi$ are  $\sigma$-compact. Fix $t_0>0$ and set $A_{t_0}:=\{x\, | \, \phi(x)\le \phi(x_{t_0})\}$. It is enough to show that, given two subnets $(x_{t_\eta})_\eta$ and $(x_{t_\gamma})_\gamma$ converging to points $\x,\x' \in A_{t_0}$,  we necessarily have that  $\x=\x'$. By \eqref{eq:cv_infty}, we have
    \[ 0 \le c(x_{t_\eta}, x_{t_\gamma}) \le \frac{|\phi(x_{t_\eta})- \phi( x_{t_\gamma})|}{\lambda} \quad  \forall  \eta, \gamma.\]
    Using the  joint $\sigma$- lower semicontinuity of $c$ and the fact that $\lim_{t \to + \infty}\phi(x_t)=A$, we deduce that $c(\x,\x')=0$ so that $\x=\x'$.

    Let us show that $\x$ is the unique minimizer of $\phi$.  Taking the   $\liminf$ as $t \to + \infty$ in \eqref{eq:cv_infty2} and, using the  $\sigma$- lower semicontinuity properties of $c$ and $\phi$, we get 
    \begin{equation}\label{eq:cv_infty_min}
        0 \le \lambda c(x,\x)\le \phi(x)-\limsup_{t\to+\infty}\phi(x_t)\le \phi(x)-\phi(\x)  \quad  \forall  x \in \dom{\phi},
    \end{equation}
     which shows that $\x$ is a minimizer of $\phi$ and that it is unique by  the nondegeneracy  of $c$  coming from \eqref{eq:addass} .

    Similarly,  for given $0<t_0<t$,  considering \eqref{eq:evi_exp_int} for  $s=t_0$, and $x=\x$, we obtain
    \begin{equation*}
        -c(\x,x_{t_0})\le e^{\lambda(t-t_0)}c(\x,x_t)-c(\x,x_{t_0})
    \le E_\lambda(t-t_0)\left(\phi(\x)-\phi(x_t)\right) \quad  \text{ for every } 0 < t_0 \le t 
    \end{equation*}
    which  implies $\phi(x_t)-\phi(\x) \le \frac{\lambda c(\x,x_{t_0})}{e^{\lambda (t-t_0)}-1}$.  In combination   with \eqref{eq:cv_infty_min} evaluated at $x=x_t$,  the latter  gives the first  inequality in  \eqref{eq:asympt_lambda}.

     Note that \eqref{eq:cv_infty_min} is  nothing but relation  \eqref{eq:evi_diff} written for the  constant trajectory  $t \mapsto \x$,  proving that it indeed  belongs to $\EVI_\lambda(X,c,\phi)$. Therefore, the second part of \eqref{eq:asympt_lambda} is simply  the  contractivity  \eqref{eq:lambda_contraction} for the curves $t \mapsto x_t$ and $t \mapsto \x$ with $s=t_0$.

      In the case  $\lambda=0$, we use the fact that $t\mapsto \phi(x_t)$  is non-increasing and the integral formulation \eqref{eq:evi_int} with $x=\bar{x}$  and $s=t_0$ to obtain that,  for every $0 < t_0 \le t$, it holds 
    \begin{equation}\label{eq:cv_lambda0}
        (t-t_0)(\phi(x_t)-\phi(\x)) \le \int_{t_0}^t (\phi(x_r)-\phi(\x)) \, {\rm d}r  \stackrel{\eqref{eq:evi_int}}{\le} c(\x,x_{t_0})-c(\x,x_{t}) \stackrel{c \geq 0}{\le} c(\x,x_{t_0}). 
    \end{equation}
    This  gives that $t \mapsto c(\bar{x}, x_t)$ is non-increasing and proves \eqref{eq:asympt_lambda0}.

    The second inequality in \eqref{eq:asympt_lambda_deriv} follows by the already proven fact that $t \mapsto e^{2\lambda t} |\dot{x}_{t+}|_c^2$ is non-increasing  on  $\mathbb{R_+}$.  The first one follows from the same fact,  energy identity \eqref{eq:energy_identity}, and has to be proven only for $\lambda>0$:
    \begin{align*}
        \phi(x_t)-\phi(\x)&=-\int_t^{+\infty} \frac{\de}{\de  r }\phi(x_{r}) \de r  = \int_t^{+\infty} e^{-2 \lambda r} e^{2 \lambda r}|\dot x_{r+ }|^2_c\de r \\
        & \le e^{2 \lambda t}|\dot x_{t+ }|^2_c \int_t^{+\infty} e^{-2 \lambda r} \de r = \frac{|\dot x_{t+ }|^2_c}{2\lambda}.
    \end{align*}
     Using  \eqref{eq:evi_exp_int_ref} with  $x=\x$  and $s=t_0$ we  obtain that, for every $0 < t_0 \le t$, it holds 
    \begin{equation}
    e^{\lambda(t-t_0)}c(\x,x_t)+E_\lambda(t-t_0)\left(\phi(x_t)-\phi(\x)\right)+ \frac{(E_\lambda(t-t_0)) ^2}{2}|\dot x_{t+ }|_c^2
    \le c(\x,x_{t_0}).
    \end{equation}
    As the first two terms on the  l.h.s.\  are non-negative, the third inequality in \eqref{eq:asympt_lambda_deriv} eventually ensues.

     Finally, let us assume that $\lambda=0$ and $\phi$ has $\sigma$-compact sublevel sets, so that it has at least one minimizer $\x$. Since  $\{x_t\}_{t>0}$  is contained in a sublevel set of $\phi$, there exists a subnet $(x_{t_\eta})_\eta$ converging to a point $\x'$. By \eqref{eq:asympt_lambda0}, using the lower semicontinuity of $\phi$, we deduce that 
    \[ \phi(\x') \le \liminf_\eta \phi(x_{t_\eta}) \le \liminf_{\eta} \left [ \phi(\x) + \frac{c(\x,x_{t_0})}{t_\eta-t_0} \right ] = \phi(\x),\]
    where $t_0 >0$ is any value such that $t_\eta >t_0$ eventually in $\eta$. Therefore also $\x'$ is a minimum point of $\phi$.  Since $t \mapsto c(\bar{x}', x_t)$ is non-increasing, there exists the limit $\ell:=\lim_{t \to + \infty} c(\bar{x}', x_t) \in [0,+\infty) $.  Since $c$ is  jointly  $\sigma$-continuous  we deduce that $\lim_\eta c(x_{t_\eta}, \x')=0$.  Hence, we necessarily have that  $\lim_{t \to + \infty} c(x_t, \x')=0$.  By compatibility, this implies  that $x_t$ $\sigma$-converges to $\x'$ as $t \to + \infty$.
    \end{proof}

In the following corollary we show that, whenever the compatibility between $(c,\phi)$ and $\sigma$ is  strengthened  by asking a local metric character of $c$ in appropriate $\sigma$-neighbourhoods, then we can improve \eqref{eq:locorsl} adding the equality with a notion of local slope which coincides with the usual one for $c=\frac{d^2}{2}$ and $\sigma$ the topology induced by $d$.
\begin{corollary}\label{cor:metric_case} Under the same assumptions of Theorem \ref{thm:eviprop}, assume additionally that $c$ is separately $\sigma$-continuous and the following local metric character of $c$: for every $x \in X$ and every $\eps>0$, there exists a $\sigma$-open subset $x \in U \subset X$ such that
\begin{equation}\label{eq:def_locally_square_metric}
\sqrt{c(x,x')}-\sqrt{c(x',x'')} \le  \sqrt{c(x,x'')}(1+ \varepsilon ) \quad \forall \, x',x''\in U.
\end{equation}
Then
\[
|\partial \phi|_c(t;x(\cdot)) = \limsup_{x' \stackrel{\sigma}{\to} x_t} \frac{(\phi(x_t)-\phi(x'))^+}{\sqrt{2 c(x_t,x')}} \quad \forall \, t>0.
\]
\end{corollary}
\begin{proof} Fix $t>0$ and denote the right hand side above by $|\partial \phi|_{c, \sigma}(x_{t})$. Since $x(\cdot)$ is $\sigma$-continuous, we have
    \begin{equation}
        |\partial \phi|_c(t;x(\cdot)) \le |\partial \phi|_{c, \sigma}(x_{t}).
    \end{equation}
    We are left to show the opposite inequality.
     We start with \eqref{eq:evi_int} for test point $x'\neq x_t$ and $t<t+h$, for $h>0$, to get
    \begin{multline}
    \frac{(\sqrt{c(x',x_{t+h})}-\sqrt{c(x',x_{t})})\cdot (\sqrt{c(x',x_{t+h})}+\sqrt{c(x',x_{t})})}{h} + \frac{\lambda}{h}\int_{t}^{t+h} c(x',x_r) \de r \\
    \le \phi(x')-\frac{1}{h}\int_{t}^{t+h} \phi(x_r) \de r.    \label{eq:evi_int_sqrt}
    \end{multline}
    We fix $\eps>0$ and we use the local metric character of $c$ to find a $\sigma$-open subset $U\subset X$ as in \eqref{eq:def_locally_square_metric} with $x=x_t$. Since $x(\cdot)$ is $\sigma$-continuous, we have that $x_{t+h} \in U$ for $h>0$ small enough; therefore, for such values of $h$ and for every $x' \in U$, we deduce from \eqref{eq:evi_int_sqrt} that
    
\begin{equation}
    \frac{\frac{1}{h}\int_{t}^{t+h} \phi(x_r) \de r -\phi(x')}{\sqrt{2c(x_t, x')}} \le  (1+\eps) \frac{\sqrt{c(x_t, x_{t+h})}}{h} \frac{\sqrt{c(x',x_{t+h})}+\sqrt{c(x',x_{t})}}{\sqrt{2c(x_t, x')}} - \frac{\lambda}{h} \int_t^{t+h}c(x',x_r) \de r \label{eq:evi_int_sqrt2}.
\end{equation}
Taking first a $\limsup$ as $h \downarrow 0$, using the continuity of $\tilde \phi$ and of $h \mapsto c(x',x_{t+h})$, we get that for every $x' \in U$ it holds
   \[
   \frac{\phi(x_t)-\phi(x')}{\sqrt{2 c(x_{t},x')}} \le (1+\eps) |\dot x_{t+ }|_c +\frac{\lambda^-}{\sqrt{2}} \sqrt{c(x',x_t)}
   \]
   We take the positive part on both sides and, passing to the $\limsup$ as $x' \stackrel{\sigma}{\to} x_{t}$ and then as $\eps \downarrow 0$, we get
    \begin{equation}
        |\partial \phi|_c(x_{t})\le |\dot x_{t+ }|_c,
    \end{equation}
    concluding the proof.
\end{proof}
We remark that, when $c=\frac{d^2}{2}$ for some metric $d$, redoing the same proof as above with $\limsup$ replaced by $\sup$, leads to the equality between global and local slope along the EVI solution, as given in \cite[formula (3.17)]{MuratoriSavare}.

\subsection{On local characterizations}\label{sec:slope_local}

 In this section, we temporarily depart from the study of EVI  solutions and comment on some alternative formulations of local nature. Indeed, in the metric case $c=d^2/2$ one could consider a hierarchy of weaker notions, which go under the name of {\it Energy-Dissipation Equality/Inequality} (EDE and EDI, for short), as well as of {\it subdifferential formulation}, \cite{Ambrosio2012userguide}. In the metric case, one has the 
chain of implications \cite[Section 3.2]{Ambrosio2012userguide}
\begin{equation*}
    EVI \Rightarrow EDE \Rightarrow EDI \Rightarrow \text{subdifferential formulation}.
\end{equation*}
Except for the EVI, which is global,  the other formulations are of local nature.  Reversing these implications requires  a global property, such as some notion of convexity. This will be considered in \Cref{prop:evi_local_2}.
The following is inspired by \cite[Proposition 3.11]{MuratoriSavare}. 

\begin{proposition}[Local characterization]\label{prop:evi_local_1}  Let $\lambda \ge 0$, $\phi:X\to (-\infty, + \infty]$, $c$ be a cost satisfying \eqref{eq:addass}, 
$\sigma$ be compatible with the pair $(c,\phi)$, and let $x:\R_{>0} \to X$ belong to $ \EVI_\lambda(X,c,\phi)$.  For any  $t_0>0$ and any curve $\gamma :[0,+\infty) \to X$ with $\gamma_0=  x_{t_0}$, we have that
    \begin{equation}\label{eq:evi_local}
    [\dot x, \gamma]_{c,t_0}:= \liminf_{s\downarrow 0} \frac{1}{s} \frac{\de^+}{\de t} c(\gamma_s, x_t)_{|t=t_0} \le \phi'(x_{            t_0};\gamma):= \liminf_{s\downarrow 0} \frac{\phi(\gamma_s)-\phi(x_{t_0})}{s}
    \end{equation}
    where the inequality is intended in $[-\infty,+\infty]$.
\end{proposition}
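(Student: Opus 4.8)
The plan is to read off \eqref{eq:evi_local} directly from the differential form \eqref{eq:evi_diff} of the EVI, which is available here because $\sigma$ is compatible with $(c,\phi)$ and $\lambda\ge 0$, see Theorem \ref{thm:equiv}. Preliminarily, recall from Theorem \ref{thm:equiv} that $x_t\in\dom{\phi}$ for every $t>0$; in particular $\phi(x_{t_0})$ is finite, so the differences $\phi(\gamma_s)-\phi(x_{t_0})$ are well defined in $(-\infty,+\infty]$ for every $s>0$ and no $\infty-\infty$ indeterminacy can occur. Also note that $\frac{\de^+}{\de t} c(\gamma_s,x_t)_{|t=t_0}$, being an upper right Dini derivative, always exists in $[-\infty,+\infty]$.

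Fix $s>0$ with $\gamma_s\in\dom{\phi}$. Applying \eqref{eq:evi_diff} with test point $x=\gamma_s$ at the time $t=t_0$ gives
\[
\frac{\de^+}{\de t} c(\gamma_s,x_t)_{|t=t_0} + \lambda\, c(\gamma_s,x_{t_0}) \le \phi(\gamma_s)-\phi(x_{t_0}).
\]
Dividing by $s>0$ and discarding the term $\tfrac{\lambda}{s}c(\gamma_s,x_{t_0})$, which is nonnegative since $\lambda\ge 0$ and $c\ge 0$ by \eqref{eq:addass}, we obtain
\[
\frac{1}{s}\frac{\de^+}{\de t} c(\gamma_s,x_t)_{|t=t_0} \le \frac{\phi(\gamma_s)-\phi(x_{t_0})}{s}.
\]
When instead $\gamma_s\notin\dom{\phi}$ the right-hand side above equals $+\infty$, so the very same inequality holds trivially. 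Hence it is valid for \emph{all} $s>0$, and taking the $\liminf$ as $s\downarrow 0$ on both sides --- an operation that preserves inequalities in $[-\infty,+\infty]$ --- produces exactly $[\dot x,\gamma]_{c,t_0}\le\phi'(x_{t_0};\gamma)$, which is \eqref{eq:evi_local}.

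I do not expect any serious obstacle: the proposition is essentially a one-line reformulation of the differential EVI. The only points worth a word of care are that $\phi(x_{t_0})<+\infty$, which makes the case $\gamma_s\notin\dom{\phi}$ harmless, and that the standing sign assumption $\lambda\ge 0$ (together with $c\ge0$) is exactly what allows the $\lambda$-term to be dropped after the division by $s$; in particular, no symmetry of $c$, no continuity of $\gamma$, and no slope notion is needed.
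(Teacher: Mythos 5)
Your proof is correct and follows essentially the same route as the paper: test the differential EVI \eqref{eq:evi_diff} at $x=\gamma_s$, drop the nonnegative $\lambda c$ term using $\lambda\ge0$ and \eqref{eq:addass}, divide by $s$, and take $\liminf$ as $s\downarrow 0$. Your explicit handling of the case $\gamma_s\notin\dom{\phi}$ (where the right-hand side is $+\infty$) and the remark that $\phi(x_{t_0})<+\infty$ are slightly more detailed than the paper's terse "restrict to curves with $\gamma_s\in\dom{\phi}$", but the argument is the same.
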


\begin{proof}
    Notice first that we can  restrict ourselves  to the case $\lambda=0$ since $c$ is non-negative and to curves such that $\gamma_s\in\dom{\phi}$ for all $s\in(0,1]$. Hence setting $x=\gamma_s$ in \eqref{eq:evi_diff} and dividing by $s$, we get
    \begin{equation*}
       \frac{1}{s} \frac{\de^+}{\de t} c(\gamma_s, x_t)_{|t=t_0} \le \frac{\phi(\gamma_s)-\phi(x_{t_0})}{s}
    \end{equation*}
    taking the $\liminf$ on both sides, we obtain \eqref{eq:evi_local}.
\end{proof}
\begin{remark}\label{rmk:subdifferential}
     Equation \eqref{eq:evi_local} corresponds to the generalization to the case of a general cost $c$ of the classical subdifferential formulation.  
    Indeed, consider first for simplicity that $X$ is a Hilbert space  with norm $\| \cdot \|$,  let  $c={\|\cdot\|^2}/{2}$,  and  assume that $\phi$ is differentiable.  Then, taking $v\in X$ and $ \gamma_s=x_{t_0}+sv$, we obtain $[\dot x, \gamma]_{c,t_0}=\bracket{-\dot x_{t_0},v} \le \bracket{\nabla \phi(x_{t_0}),v}$. Since this holds for all $v  \in X$,  we get  that $\dot x_{t_0}=-\nabla \phi(x_{t_0})$. 
    
    More generally, assume that $(X,d)$ is a  locally Hilbertian manifold, that $c\in C^2$ with $c = 0$  on the diagonal, and that  $\phi \in C^1$.  Setting  $\dot \gamma_0=v$  for arbitrary $v \in T_{x_{t_0}}M$, as $0=\nabla_2 c(\gamma_0, x_{t_0})$, we get 
    \begin{align}
    &\liminf_{s\downarrow 0} \frac{\phi(\gamma_s)-\phi(x_{t_0})}{s} \ge [\dot x, \gamma]_{c,t_0}=\liminf_{s\downarrow 0} \frac1s \bracket{\nabla_2 c(\gamma_s, x_{t_0}),\dot x_{t_0}}\nonumber\\
    &\quad = \left\langle\lim_{s\downarrow 0}\frac{\nabla_2 c(\gamma_s, x_{t_0})-\nabla_2 c(\gamma_0, x_{t_0})}{s},\dot x_{t_0}\right\rangle=\bracket{\nabla_{1,2} c(x_{t_0}, x_{t_0}) v,\dot x_{t_0}}\label{eq:evi_local2}
    \end{align}
    which is precisely the definition of the elements of the  Fr\'echet  subdifferential $\partial \phi$, hence $$\nabla_{2,1} c(x_{t_0}, x_{t_0})\dot x_{t_0} \in \partial \phi(x_{t_0}).$$

    We thus recover the seminal observation of Kim and McCann \cite[Section 2]{KimMcCann2010} that the ``geometric information of $c$'' is contained in its mixed-Hessian $\nabla_{2,1} c$. Moreover two costs with the same mixed Hessian will induce the same gradient flows, as shown for $\calW_c$ and $\calW_2$ by \cite{rankin2024jkoschemesgeneraltransport}.
\end{remark}

We will now consider the converse to \Cref{prop:evi_local_1}, that is whether the local equation \eqref{eq:evi_local}  implies that  $x(\cdot) \in  \EVI_\lambda(X,c,\phi)$.  To this aim, some global property will be needed.  
 Concerning  $\phi$,  this will be some suitable form of $\lambda$-convexity.  Nonetheless, as the cost $c$ is integrated along trajectories, some form of convexity for $c$ can be useful, as well. The latter is naturally connected to curvature conditions of $c$.  Note however that for geodesic metric spaces the validity of \eqref{eq:decreasing_slope} was shown without using (Alexandrov) curvature, see \cite[Lemma 3.13]{MuratoriSavare}.

More precisely, our goal is to prove that, under some  suitable assumption  on $c$, for some class of curves $\gamma(\cdot)$ such that $\gamma_{ 0 }=x_{t_0}$ and $\gamma_{ 1 }=x$ is left free, we have 
\begin{equation}\label{eq:decreasing_slope}
    [\dot x, \gamma]_{c,t_0}:= \liminf_{s\downarrow 0} \frac{1}{s} \frac{\de^+}{\de t} c(\gamma_s, x_t)_{|t=t_0} \ge \frac{\de^+}{\de t} c(\gamma_{ 1}, x_t)_{|t=t_0}.
\end{equation}
This  would indeed correspond  to \cite[eq.~(3.56), Lemma 3.13]{MuratoriSavare}, which holds for  a complete geodesic space  $(X,d)$, $c={d^2}/{2}$, and with $\gamma$  being  any geodesic.  We hence anticipate that one  is   asked to restrict the class of admissible curves $\gamma(\cdot)$ in the case of general costs $c$, as well.  In preparation for \Cref{prop:evi_local_2}, we recall the following Definition from \cite[Definitions 2.6, 2.7]{leger2024nonnegative}.  

\begin{definition}[NNCC space and variational $c$-segment]\label{def:nncc}
    Let 
    $c\colon X\times \YY\to [-\infty,+\infty]$  be given.  We say that $(X\times \YY,c)$ is a \emph{Non-Negatively Cross-Curved (NNCC)  space} if for every $(x_0,x_1,\y)\in X\times X\times \YY$ such that $c(x_0,\y)$ and $c(x_1,\y)$ are finite, there exists a function  $\gamma:[0,1] \to X $ with $\gamma_0=x$ and $\gamma_1=x_1$  such that 
    \begin{align}  
    c(\gamma_s,\y)-c(\gamma_s,y)&\leq (1-s)[c(\gamma(0),\y)-c(\gamma(0),y)] \nonumber \\
     & \quad + s \,[c(\gamma(1),\y)-c(\gamma(1),y)]
     \quad \forall y \in \YY, s \in (0,1),\label{eq:NNCC-ineq}
    \end{align}
    with the rule $(+\infty)+(-\infty)=+\infty$  for  the r.h.s.\  In this case, we say that the function $\gamma(\cdot)$ is a 
    \emph{variational $c$-segment}. 
\end{definition}

 Relation  \eqref{eq:NNCC-ineq} is  often referred to as the  \emph{NNCC inequality}.  Being required for all $y \in \YY$, it is a quite strong property. In particular, variational $c$-segments may not exist.  
 Correspondingly, situations where variational $c$-segments exist are of  special  
 interest. In the metric setting $c={d^2}/{2}$, NNCC spaces are a   specific  subclass  of complete PC metric spaces (\cite[Proposition 2.26]{leger2024nonnegative}).  In particular,  the Wasserstein space is NNCC \cite[Theorem 3.11]{leger2024nonnegative}. A prime example of non-metric NNCC space is a Banach space equipped with a Bregman divergence, see \cite[Example 2.9]{leger2024nonnegative}.

\begin{proposition}[\eqref{eq:evi_local} implies EVI for NNCC spaces] \label{prop:evi_local_2}    Let $\lambda \ge 0$, $\phi:X\to (-\infty, + \infty]$, $c$ be a cost satisfying \eqref{eq:addass}, and $\sigma$ be compatible with the pair $(c,\phi)$.  
Let $x: \R_{>0}  \to \dom{\phi}$ be a $\sigma$-continuous function. Assume that 
\begin{itemize}
    \item[\rm i)] $(X\times X,c)$ is  a  NNCC space; 
    \item[\rm ii)] for all $x_0,\, x \in X$ 
    there exists a variational c-segment $\gamma:[0,1]\to X$  with  $\gamma_{ 0 }=x_0$ and $\gamma_{ 1}=x$, and 
     $M:[0,1]\to \R \cup \{+\infty\}$ with 
    \begin{equation}\label{eq:convexity_g_c}
        \phi(\gamma_t) \le (1-t)\phi(x_0)+t\phi(x)-\lambda t c(x,x_0)+ M(t) 
    \end{equation}
     such that   $\liminf_{t\downarrow 0}\frac{M(t)}{t}=0$; 
    \item[\rm iii)] \eqref{eq:evi_local} holds along these  variational  $c$-segments.
    \end{itemize}
    Then, $x(\cdot)  \in  \EVI_\lambda(X,c,\phi)$. 
\end{proposition}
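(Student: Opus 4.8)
The plan is to verify the differential form \eqref{eq:evi_diff} of the EVI at an arbitrary $t_0>0$ and an arbitrary test point $x\in\dom{\phi}$, and then to invoke the equivalence in \Cref{thm:equiv} (applicable since $\lambda\ge 0$): together with the standing hypotheses that $x(\cdot)$ is $\sigma$-continuous and $x_t\in\dom{\phi}$ for all $t>0$, this yields $x(\cdot)\in\EVI_\lambda(X,c,\phi)$. So fix $t_0>0$ and $x\in\dom{\phi}$ and let $\gamma:[0,1]\to X$ be the variational $c$-segment from $\gamma_0=x_{t_0}$ to $\gamma_1=x$ provided by (i)–(iii); it is convenient, and by (i) possible, to take $\gamma$ associated with the base point $\bar y=x_{t_0}$, so that the NNCC inequality \eqref{eq:NNCC-ineq} holds with $\bar y=x_{t_0}$, the $\lambda$-convexity bound \eqref{eq:convexity_g_c} holds with $x_0=x_{t_0}$, and \eqref{eq:evi_local} holds along $\gamma$.

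The core of the argument is to establish the decrease inequality \eqref{eq:decreasing_slope}, namely $[\dot x,\gamma]_{c,t_0}\ge \frac{\de^+}{\de t}c(x,x_t)_{|t=t_0}$. I would write the NNCC inequality \eqref{eq:NNCC-ineq} for $\gamma$ with $\bar y=x_{t_0}=\gamma_0$ and test it at $y=x_{t_0+h}$ for $h>0$. Since $c(\gamma_0,\bar y)=c(x_{t_0},x_{t_0})=0$ by \eqref{eq:addass}, the $h$-independent terms collapse and a rearrangement gives, for every $s\in(0,1)$ and $h>0$,
\[
 c(\gamma_s,x_{t_0+h})-c(\gamma_s,x_{t_0})\;\ge\;(1-s)\,c(x_{t_0},x_{t_0+h})+s\bigl(c(x,x_{t_0+h})-c(x,x_{t_0})\bigr).
\]
Dividing by $h$ and discarding the nonnegative term $(1-s)\,c(x_{t_0},x_{t_0+h})/h$ — this is exactly where nonnegativity of $c$ enters, since we control the sign but not the size of $c(x_{t_0},x_{t_0+h})/h$ — then passing to $\limsup_{h\downarrow 0}$ (using monotonicity of $\limsup$ and $\limsup_{h\downarrow 0}(s\,a_h)=s\,\limsup_{h\downarrow 0}a_h$ for $s>0$) yields $\frac{\de^+}{\de t}c(\gamma_s,x_t)_{|t=t_0}\ge s\,\frac{\de^+}{\de t}c(x,x_t)_{|t=t_0}$. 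Dividing by $s$ and taking $\liminf_{s\downarrow 0}$ produces \eqref{eq:decreasing_slope}.

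It then remains to chain the estimates. By \eqref{eq:decreasing_slope}, by (iii) (i.e.\ \eqref{eq:evi_local}), and by the $\lambda$-convexity bound \eqref{eq:convexity_g_c} of (ii) with $x_0=x_{t_0}$ — which, after dividing by $s>0$ and passing to $\liminf_{s\downarrow 0}$ while using $\liminf_{s\downarrow 0}M(s)/s=0$, gives $\phi'(x_{t_0};\gamma)\le \phi(x)-\phi(x_{t_0})-\lambda c(x,x_{t_0})$ — we obtain
\[
 \frac{\de^+}{\de t}c(x,x_t)_{|t=t_0}\;\le\;[\dot x,\gamma]_{c,t_0}\;\le\;\phi'(x_{t_0};\gamma)\;\le\;\phi(x)-\phi(x_{t_0})-\lambda c(x,x_{t_0}),
\]
that is, $\frac{\de^+}{\de t}c(x,x_t)_{|t=t_0}+\lambda c(x,x_{t_0})\le \phi(x)-\phi(x_{t_0})$, which is precisely \eqref{eq:evi_diff} at $t_0$ for the test point $x$. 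Since $t_0>0$ and $x\in\dom{\phi}$ were arbitrary, \Cref{thm:equiv} concludes the proof. (The chain of inequalities is read in $[-\infty,+\infty]$; finiteness of the right-hand side automatically precludes $\frac{\de^+}{\de t}c(x,x_t)_{|t=t_0}=+\infty$, so no degenerate case arises.)

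The main obstacle is the derivation of \eqref{eq:decreasing_slope} from the NNCC inequality: one must use precisely the base point $\bar y=x_{t_0}$ so that all the $h$-independent contributions turn into difference quotients at $t_0$, and one must place the $\limsup$/$\liminf$ correctly, exploiting that the only uncontrolled quantity, $c(x_{t_0},x_{t_0+h})/h$, is nonnegative and enters with a favorable sign.
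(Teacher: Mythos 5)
Your proposal is correct and follows essentially the same route as the paper's proof: you test the NNCC inequality with base point $\y=x_{t_0}$ at $y=x_{t_0+h}$, discard the nonnegative $(1-s)$-term, pass to $\limsup_{h\downarrow 0}$ and $\liminf_{s\downarrow 0}$ to obtain \eqref{eq:decreasing_slope}, and then chain it with \eqref{eq:evi_local} and the convexity bound \eqref{eq:convexity_g_c} to reach \eqref{eq:evi_diff}, concluding via \Cref{thm:equiv}. This matches the paper's argument step for step.
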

 Note that relation  \eqref{eq:convexity_g_c} holds if 
$t\mapsto \phi(\gamma_t) - \lambda t^2 c(x,x_0)$  is convex. Moreover, it follows also if $t\mapsto \phi(\gamma_t) - \lambda c(\gamma_t,x_0)$ is convex and  $\liminf_{t\downarrow 0} \frac{\lambda c(\gamma_t,x_0)}{t}=0$. 
These two  cases  coincide when $(X,d)$ is a geodesic metric space, $c=d^2/2$ and $\gamma$ is a geodesic. The  former  is closer to the  condition on  generalized geodesics of \cite{ags08}, while the  latter  is more inline with  the theory of  NNCC spaces and \eqref{eq:NNCC-ineq} (see also \Cref{rmk:compatibility}).

\begin{proof}[Proof of \Cref{prop:evi_local_2}] Let $t_0>0$, $h>0$, $x\in X$ and take $x_0=\y=x_{t_0}$, $x_1=x$, $y=x_{t_0+h}$ in \eqref{eq:NNCC-ineq}, noticing that, since $\gamma(\cdot)$ does not depend on $y$,
    \begin{equation*}
        c(\gamma_s,x_{t_0})-c(\gamma_s,x_{t_0+h})\leq (1-s)[c(x_{t_0},x_{t_0})-c(x_{t_0},x_{t_0+h})] + s \,[c(x,x_{t_0})-c(x,x_{t_0+h})].
    \end{equation*} 
    As $c(x_{t_0},x_{t_0})=0$ and $c(x_{t_0},x_{t_0+h})\ge 0$, we can  drop  the  (1-s)-term  of the r.h.s., and obtain
    \begin{equation*}
        c(\gamma_s,x_{t_0+h}) - c(\gamma_s,x_{t_0})\geq s \,[c(x,x_{t_0+h})-c(x,x_{t_0})].
    \end{equation*}
    Divide by $h$ and take the  $\limsup$ as $h\downarrow 0$  to obtain
    \begin{equation}
        \frac{\de^+ c(\gamma_s, x_t)}{\de t}_{|t=t_0}\geq s \frac{\de^+ c(x, x_t)}{\de t}_{|t=t_0}  . 
    \end{equation}
    Dividing by $s$ and taking the $\liminf$  for $s\downarrow 0$,  we obtain \eqref{eq:decreasing_slope}. 

    Consider now  \eqref{eq:convexity_g_c}, divide by $t$, and take the   $\liminf$ as  $t\downarrow 0$ to obtain
    \begin{equation}\label{eq:evi_diff_local}
       \frac{\de^+}{\de t} c(x, x_t)_{|t=t_0}\stackrel{\eqref{eq:decreasing_slope}}{\le}  [\dot x, \gamma]_{c,t_0} \stackrel{\eqref{eq:evi_local}}{\le}\phi'(x_{ t_0};\gamma)  \stackrel{\eqref{eq:convexity_g_c}}{\le} \phi(x)-\phi(x_{t_0})-\lambda c(x, x_{t_0})
    \end{equation}
    so $x(\cdot)$ satisfies \eqref{eq:evi_diff},  hence $x(\cdot) \in  \EVI_\lambda(X,c,\phi)$.
\end{proof}

\begin{corollary}   Let $0\le \lambda_1 \le \lambda_2$, $\phi:X\to (-\infty, + \infty]$, $c$ be a cost satisfying \eqref{eq:addass}, and $\sigma$ be compatible with the pair $(c,\phi)$.  Then, $EVI_{\lambda_2}(X,c,\phi)\subset EVI_{\lambda_1}(X,c,\phi)$. 
Conversely, for $(X,c)$  being a   NNCC space, if $x(\cdot) \in  EVI_{\lambda_1}(X \times X,c,\phi)$ and $\phi$ is $\lambda_2$-convex along variational $c$-segments as in \eqref{eq:convexity_g_c}, then $x(\cdot) \in  EVI_{\lambda_2}(X,c,\phi)$.
\end{corollary}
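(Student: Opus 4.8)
\emph{Forward inclusion.} The plan is to pass through the differential form of the EVI. Since $\lambda_2\ge 0$, \Cref{thm:equiv} guarantees that any $x(\cdot)\in EVI_{\lambda_2}(X,c,\phi)$ is $\sigma$-continuous, takes values in $\dom{\phi}$, and satisfies \eqref{eq:evi_diff} with constant $\lambda_2$. Because $c\ge 0$ by \eqref{eq:addass} and $0\le\lambda_1\le\lambda_2$, one has $\lambda_1 c(x,x_t)\le\lambda_2 c(x,x_t)$ for every $t>0$ and every $x\in\dom{\phi}$, whence
\[
\frac{\de^+}{\de t}c(x,x_t)+\lambda_1 c(x,x_t)\le\frac{\de^+}{\de t}c(x,x_t)+\lambda_2 c(x,x_t)\le\phi(x)-\phi(x_t).
\]
This is \eqref{eq:evi_diff} with constant $\lambda_1$, and since $\lambda_1\ge0$ a further application of \Cref{thm:equiv} yields $x(\cdot)\in EVI_{\lambda_1}(X,c,\phi)$.

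\emph{Converse.} Assume $(X\times X,c)$ is NNCC, $x(\cdot)\in EVI_{\lambda_1}(X,c,\phi)$, and $\phi$ is $\lambda_2$-convex along variational $c$-segments in the sense of \eqref{eq:convexity_g_c}. The strategy is to verify the three hypotheses of \Cref{prop:evi_local_2} with $\lambda=\lambda_2$. Hypothesis i) is the NNCC assumption. Hypothesis ii) is precisely the assumed $\lambda_2$-convexity of $\phi$ along variational $c$-segments. For hypothesis iii), one uses \Cref{prop:evi_local_1}: since $\lambda_1\ge0$ and $x(\cdot)\in EVI_{\lambda_1}(X,c,\phi)$, the local inequality \eqref{eq:evi_local} holds along \emph{every} curve $\gamma$ with $\gamma_0=x_{t_0}$, in particular along the variational $c$-segments selected in ii). Moreover $x(\cdot)$ is $\sigma$-continuous with values in $\dom{\phi}$ by \Cref{thm:equiv}, so that the standing regularity assumption of \Cref{prop:evi_local_2} is met. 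Invoking \Cref{prop:evi_local_2} then delivers $x(\cdot)\in EVI_{\lambda_2}(X,c,\phi)$.

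\emph{On the main obstacle.} No genuine difficulty is expected: both directions are compositions of results already established, the only point requiring care being that all the auxiliary regularity carried along --- $\sigma$-continuity of $x(\cdot)$, the inclusion $x_t\in\dom{\phi}$, and the equivalence between the exponential-integral and the differential forms of the EVI --- is available for the non-negative parameters $\lambda_1,\lambda_2\ge0$ without the additional separate-continuity hypothesis on $c$. This is exactly the regime covered by \Cref{thm:equiv}, \Cref{prop:evi_local_1}, and \Cref{prop:evi_local_2}, so the argument closes without extra assumptions.
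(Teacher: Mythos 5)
Your proof is correct and follows essentially the same route as the paper: the forward inclusion comes from monotonicity in $\lambda$ of the differential form \eqref{eq:evi_diff} using $c\ge 0$, and the converse combines \Cref{prop:evi_local_1} (giving \eqref{eq:evi_local}, which is $\lambda$-independent for $\lambda\ge 0$) with \Cref{prop:evi_local_2} applied with $\lambda=\lambda_2$, which is exactly the chain \eqref{eq:evi_diff_local} the paper invokes.
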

\begin{proof}
    The first part  follows immediately from  \eqref{eq:evi_diff}. The converse stems from \eqref{eq:evi_diff_local} applied to $\lambda=\lambda_2$, since \eqref{eq:evi_local}  is independent of $\lambda$ in the case $\lambda \geq 0$.  
\end{proof}

\section{Existence of EVI solutions as limit of splitting schemes}
\label{sec:f+g}

In this section, we are interested in describing a natural  minimizing  movements scheme that converges to the EVI. Unlike previous works, we will not focus  solely  on the implicit (Euler) iteration, but also on a more general splitting scheme. The key idea, following \cite{leger2023gradient}, is to perform a {\it majorization-minimization} of the functional $\phi:=f+g$ where $f:X\to\R$ and $g: X\to (-\infty, +\infty]$, noticing that for any set $X$, $\tau>0$, and any cost function $c:X\times  X\to \R$ we have
\begin{equation}\label{eq:maj-min}
    \phi(x)=f(x)+g(x)\le g(x)+\frac{c(x,y)}{\tau}+f^{c/\tau}(y)=: \Phi(x,y)
\end{equation}
using the $c$-transform $f^{c/\tau}(y):=\sup_{x'\in X}f(x')-\frac{c(x',y)}{\tau}$. Given $x_0\in X$, we then perform an alternating minimization of $\Phi$, assuming that the iterates exist,
\begin{align}
    y_{n+1}^\tau \in &\argmin_{y\in  X} g(x_n^\tau)+\frac{c(x_n^\tau,y)}{\tau}+f^{c/\tau}(y), \label{eq:iteration_y}\\
    x_{n+1}^\tau \in &\argmin_{x\in X}g(x)+\frac{c(x,y_{n+1}^\tau)}{\tau}+f^{c/\tau}(y^\tau_{n+1}). \label{eq:iteration_x}
\end{align}
 The alternating minimization of $\Phi$ does not provide a minimizer of $\phi$ in general. This is however the case if $\phi$ is $c/\tau$-concave (\Cref{def:c-transform}), which entails that $\phi(x)=\inf_{y\in X}\Phi(x,y)$.  

 In the relevant  subcase    $f=0$   we only have implicit iterations. If $c(\cdot,y)$ is lower bounded for each $y\in  X$, then, introducing $\tilde c(x,y):=c(x,y)-\inf_{x'\in X}c(x',y)$, we have that  
$\tilde c\ge 0$ and $\Phi(x,y)=g(x)+\frac{\tilde c(x,y)}{\tau}$ in \eqref{eq:maj-min},  which clearly defines a minimizing movement scheme. To ensure that no information about $g$ is lost, we require that  $0=\inf_{y\in  X  }\tilde c(x,y)$.

In \cite{leger2023gradient} Léger and the first-named author extensively studied the discrete iterations with $\tau>0$ fixed and proved convergence of $(\phi(x_n^\tau))_{n\in \N}$ to the infimum of $\phi$ under  the so-called {\it five-point}   property. We are instead interested  in  taking the limit $\tau\to 0$ and recovering  a trajectory in  $\EVI_\lambda(X,c,\phi)$.  In \Cref{sec:scheme}, we prove that this is possible if  
$f$ and $g$ satisfy that $f$ is $\nicefrac{c}{\tau}$-concave for all small $\tau$ and  some 
cross-convexity property from  
\cite{leger2023gradient}  holds. The latter reduces  to a form of discrete EVI. Similarly to the discrete EVI in the metric case \cite{Ambrosio2012userguide}, this cross-convexity is implied by a more readable compatibility condition between the functional and the cost,  described by  
some convexity of two functionals along the same curve, see \Cref{sec:compatibility}. 

Since we consider separate properties on $f$ and $g$ due to the splitting structure, we consider also two auxiliary variables $\xi,z\in  X $ corresponding to \eqref{eq:iteration_y} with $f=0$ and \eqref{eq:iteration_x} with $g=0$. Their role is made  clear  in \Cref{table:scheme_fg}.

\subsection{Definition of the scheme through alternating minimization}\label{sec:scheme}

We start with the definitions of the fundamental objects we are going to use. Throughout the section, $X$ will be  a  nonempty  set  and $c:X \times  X   \to \R$ will be a given cost.  Note that in the first part of this section we do not  need to  assume the cost to be non-negative, as we did in the previous Section \ref{sec:EVI}. 

\begin{definition}[$c$-transform, $c$-concavity] \label{def:c-transform}  
    The \emph{$c$-transform} of a function $U\colon X\to\R$ is the function $U^c\colon  X \to(-\infty, +\infty]$ defined by 
        \begin{equation} \label{eq:def-c-transform}
            U^c(y):=\sup_{x\in X} U(x)-c(x,y),  \quad y \in X.
        \end{equation}
    We say that a function $U\colon X\to\R$ is $c$-concave if there exists a function $h\colon  X \to (-\infty, +\infty]$ such that
    \begin{equation}\label{eq:def_c_concav}
         U(x)=\inf_{y\in  X }c(x,y)+h(y)\quad \text{ for every } x \in X.
    \end{equation}
     Note that, if $U$ is $c$-concave, then $U^c$ is the smallest $h$ such that \eqref{eq:def_c_concav} holds.
\end{definition}
Since $c$ has finite values, $c$-concave functions cannot take the value $+\infty$. The next lemma shows that a $c$-concave function is also $c/\tau$-concave for $\tau \in (0,1]$, provided the cost satisfies a suitable condition. 
\begin{lemma}
    Let $c':X\times  X \to \R$ be  given.  
    Every $c$-concave function is $c'$-concave if and only if $ x \mapsto c(x,y)$ is $c'$-concave for every $y \in  X $. In particular, given $\tau \in (0,1]$,  any  $c$-concave function is $c/\tau$-concave if and only if, for all $y\in  X $, there exists $h_y: X \to (-\infty, +\infty]$ such that $\tau c(\cdot,y)=\inf_{y'\in  X } c(\cdot,y')+h_y(y')$.

    If $(X,d)$ is a metric space,   such that there exist {\it $\epsilon$-midpoints} in the following sense 
    \begin{equation}
        \forall \epsilon>0,\, \forall \, x\in X,\, x'\in X, \, \exists y\in X , \, \max(d(x,y),d(x',y))\le \frac12 d(x,x')+\epsilon,
    \end{equation}
      then $\frac{d^2(\cdot,x)}{2^n}$ is $d^2$-concave for all $x\in X$ and $n\in\N$. 
\end{lemma}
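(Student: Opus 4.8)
The three assertions should be handled in order, with the first one serving as the engine for the other two. For the first equivalence I would argue both directions directly from the definition of $c$-concavity. For the ``if'' direction, suppose every $c(\cdot,y)$ is $c'$-concave and let $U$ be $c$-concave, say $U(x)=\inf_{y\in X}\big(c(x,y)+h(y)\big)$ with $h\colon X\to(-\infty,+\infty]$; writing $c(x,y)=\inf_{y'\in X}\big(c'(x,y')+k_y(y')\big)$ with $k_y\colon X\to(-\infty,+\infty]$ and exchanging the two infima — which is legitimate since every summand involved lies in $(-\infty,+\infty]$ — one obtains $U(x)=\inf_{y'\in X}\big(c'(x,y')+\tilde h(y')\big)$ with $\tilde h(y'):=\inf_{y\in X}\big(k_y(y')+h(y)\big)$. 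A priori $\tilde h$ is valued in $[-\infty,+\infty]$, but if $\tilde h(y_0')=-\infty$ for some $y_0'$ then testing $y'=y_0'$ forces $U\equiv-\infty$, contradicting that $U$ is real-valued; hence $\tilde h\colon X\to(-\infty,+\infty]$ and $U$ is $c'$-concave. For the ``only if'' direction, note that for fixed $y_0\in X$ the function $x\mapsto c(x,y_0)$ is itself $c$-concave — use the witness $h$ equal to $0$ at $y_0$ and to $+\infty$ elsewhere — hence $c'$-concave by hypothesis.

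The second statement is then the first one applied with $c'=c/\tau$: every $c$-concave function is $c/\tau$-concave if and only if each $c(\cdot,y)$ is $c/\tau$-concave, i.e. if and only if there is $\tilde h_y\colon X\to(-\infty,+\infty]$ with $c(\cdot,y)=\inf_{y'\in X}\big(c(\cdot,y')/\tau+\tilde h_y(y')\big)$; multiplying through by $\tau>0$ and setting $h_y:=\tau\tilde h_y$ (which is $(-\infty,+\infty]$-valued precisely when $\tilde h_y$ is) gives exactly $\tau c(\cdot,y)=\inf_{y'\in X}\big(c(\cdot,y')+h_y(y')\big)$, as claimed.

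For the last statement the crucial point is the identity
\[ \frac{d^2(z,x)}{2}=\inf_{y\in X}\big(d^2(z,y)+d^2(y,x)\big)\qquad\text{for all }z,x\in X, \]
valid in every intrinsic metric space. The inequality ``$\ge$'' follows from $d^2(z,y)+d^2(y,x)\ge\tfrac12\big(d(z,y)+d(y,x)\big)^2\ge\tfrac12 d^2(z,x)$ using $2(a^2+b^2)\ge(a+b)^2$ and the triangle inequality; the inequality ``$\le$'' follows by inserting an $\epsilon$-midpoint $y_\epsilon$ of $z$ and $x$ and letting $\epsilon\downarrow 0$, since then $d^2(z,y_\epsilon)+d^2(y_\epsilon,x)\le 2\big(\tfrac12 d(z,x)+\epsilon\big)^2\to\tfrac12 d^2(z,x)$. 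With the witness $h(y):=d^2(y,x)$ this already shows that $d^2(\cdot,x)/2$ is $d^2$-concave for every $x$, i.e. the case $n=1$ (the case $n=0$ being trivial). For the inductive step, assume $d^2(\cdot,u)/2^n$ is $d^2$-concave for every $u\in X$; scaling the identity by $2^{-n}$ gives $d^2(z,x)/2^{n+1}=\inf_{y\in X}\big(d^2(z,y)/2^n+d^2(y,x)/2^n\big)$, and expanding $d^2(\cdot,y)/2^n=\inf_{w\in X}\big(d^2(\cdot,w)+k_y(w)\big)$ from the inductive hypothesis and swapping infima yields $d^2(z,x)/2^{n+1}=\inf_{w\in X}\big(d^2(z,w)+h(w)\big)$ with $h(w):=\inf_{y\in X}\big(k_y(w)+d^2(y,x)/2^n\big)$; the same real-valuedness argument as before forces $h\colon X\to(-\infty,+\infty]$, closing the induction.

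The only genuinely delicate point throughout is the bookkeeping with the value $-\infty$ when exchanging infima and introducing the new witnesses $\tilde h$ and $h$: one must keep every summand inside $(-\infty,+\infty]$ and invoke the finiteness of the relevant $c$-concave function (respectively of $d^2(\cdot,x)/2^{n+1}$) to rule out the value $-\infty$. Apart from this, all steps are elementary, the midpoint identity being the single place where the intrinsic structure of $(X,d)$ is used.
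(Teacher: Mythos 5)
Your proof is correct and follows essentially the same route as the paper: the indicator-function witness for the ``only if'' direction, the exchange of infima with the check that the new witness cannot take the value $-\infty$, and the $\epsilon$-midpoint identity $\tfrac12 d^2(x,x')=\inf_y\bigl(d^2(x,y)+d^2(x',y)\bigr)$ for the metric part. The only cosmetic difference is in the induction for $2^n$: the paper rescales $d\mapsto d/\sqrt2$ and invokes ``the same proof,'' while you expand the inductive hypothesis and swap infima directly, which amounts to the same argument.
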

\begin{proof} Suppose every $c$-concave function is $c'$-concave. For every $y \in  X $, the function $x \mapsto c(x,y)$ is $c$-concave (it is enough to take $h$ as the indicator function of $\{y\}$ in the definition of $c$-concavity) whence it is $c'$-concave.

Suppose now that $x \mapsto c(x,y)$ is $c'$-concave for every $y \in  X $. This is equivalent to say that, for every $y \in  X $, there exists a function $h_y:  X  \to (-\infty, + \infty]$ such that $c(x,y) = \inf_{y' \in  X } c'(x,y')+ h_y(y')$. If $U:X \to \R$ is $c$-concave, there exists $h: X  \to \R\cup\{+ \infty\}$ such that 
\[  U(x)=\inf_{y\in  X }c(x,y)+h(y)\quad \text{ for every } x \in X.\]
We deduce
\begin{align*}
    U(x)= \inf_{y \in  X  } \left [ \inf_{y' \in  X } c'(x,y')+h_y(y') \right ] + h(y) =
    \inf_{y' \in  X } \left \{ c'(x,y')+ \inf_{y \in  X } \left [ h_y(y')+h(y) \right] \right \}, \quad x\in X.
\end{align*}
Notice that the quantity $h'(y'):= \inf_{y \in  X } \left [ h_y(y')+h(y) \right]$ cannot attain the value $-\infty$ since otherwise we would obtain $U(x)=-\infty$, in contrast with $U(x) \in \R$.

Let $(X,d)$ be an intrinsic metric space. By the triangle  inequality, we have for any $x,x',y\in X$
    \begin{equation*}
        \frac12 d^2(x,x') \le d^2(x,y)+d^2(x',y).
    \end{equation*}
    Taking the infimum over $y$ we deduce that $\frac12 d^2(x,x') \le \inf_{y\in X}d^2(x,y)+d^2(x',y)$.  Take  $y_\epsilon$ to be an $\epsilon$-midpoint. We get $d^2(x,y_\epsilon)+d^2(x',y_\epsilon) \le 2(\frac12 d(x,x')+\epsilon)^2$. Hence taking the limit  as  $\epsilon\downarrow 0$, we obtain $\frac12 d^2(x,x')= \inf_{y\in X}d^2(x,y)+d^2(x',y)$.

 Replacing $d$ by $\frac{d}{\sqrt{2}}$, the same proof shows that $\frac14 d^2(x,\cdot)$ is $\frac{d^2}{2}$ concave, and an immediate induction gives the result for all $2^n$ with $n\in\N$. 
\end{proof}

While $c$-concavity is a classical notion, here used for explicit schemes, we now introduce the key notion of \cite{leger2023gradient}: cross-convexity. It is based on iterates and should be seen as requiring the analogue of the discrete EVI in gradient flows on metric spaces \cite[Corollary 4.1.3]{ags08}.

\begin{definition} Let $U:X \to \R$ be a function and let $x_0 \in X$ and $y_0 \in  X $. We define the (possibly empty) sets
\begin{align*}
\mathcal{P}_c(U,y_0)&:= \argmin_{x \in X} \{ c(x,y_0) - U(x) \} ,\\
\mathcal{Q}_c(U,x_0)&:= \argmin_{y \in  X } \{ c(x_0,y) +U^c(y) \}  ,\\
\mathcal{R}_c(x_0)&:= \argmin_{y \in  X } c(x_0,y)  ,\\
\mathcal{S}_c(x_0)&:=  \{\xi_0\in  X  \, | \, x_0\in\argmin_{x \in X} c(x,\xi_0) \} .
\end{align*}
\end{definition}

\begin{definition}[$c$-cross-concavity/convexity] Let $\mu\ge0$ be given. A function $U: X \to \R$ is said to be 
\begin{itemize}
\item \emph{$\mu$-strongly $c$-cross-concave} if 
\[ U(x)-U(x_1) \le \left [ c(x,y_0)-c(x_1, y_0)\right ] -  \left [ c(x,y_1)-c(x_1, y_1) \right ] -  \mu \left [ c(x,y_1)-c(x_1, y_1) \right ] \]
for every $(x,y_0) \in X\times  X $ and every $x_1 \in \mathcal{P}_c(U,y_0)$, $y_1 \in \mathcal{R}_c(x_1)$;
\item {\it $\mu$-strongly $c$-cross-convex} if 
 \[ U(x)-U(x_0) \ge -\left [ c(x,\xi_0)-c(x_0, \xi_0)\right ] +  \left [ c(x,y_0)-c(x_0, y_0) \right ] +  \mu \left [ c(x,\xi_0)-c(x_0, \xi_0) \right ] \]
 for every $(x,x_0) \in X \times X$ and every $\xi_0 \in \mathcal{S}_c(x_0)$, $y_0 \in \mathcal{Q}_c(U,x_0)$.
\end{itemize}
\end{definition}

Because of the definition of the iterates  and of  the focus on $y_0$ and $\mathcal{P}_c$ for cross-concavity and on $x_0$ and $\mathcal{Q}_c$ for cross-convexity, the two notions are  not symmetric: having $-U$ $c$-cross-concave does not imply that $U$ is $c$-cross-convex. In \Cref{sec:compatibility}, we provide some simpler curve-based sufficient assumptions for these two notions to hold.

\begin{continuance}{ex:distance} For $\tau>0$ and $c=\frac{d^2}{2\tau}$, $c$-cross-concavity is precisely the discrete EVI considered in \cite[Corollary 4.1.3]{ags08}. Cross-convexity is similar  concept,  but applies to an explicit scheme. The $\frac{d^2}{2\tau}$-concavity can be understood as a  local  $\frac{d^2}{2\tau}$-bound on the  growth.
\end{continuance}

\begin{continuance}{ex:KL} Let $X$ be a subset of a Banach space, $X^*$ the dual of the closure of its span, and assume there exists a strictly convex $u:X\to\R$ such that for all $y\in X$ there exists $u'(y)\in X^*$ such that for all $x$
\begin{equation}\label{eq:bregman_div}
    c(x,y)=u(x)-u(y)-\bracket{u'(y),x-y}.
\end{equation}
In particular if $u'(y)$ is the Gateaux derivative of $u$ at $y$, then \eqref{eq:bregman_div} states that $c$ is the Bregman divergence of $u$. The $\KL$ is not Gateaux differentiable for nondiscrete $\X\subset \R^d$, but \eqref{eq:bregman_div} still holds, see \cite[Example 2]{aubin2022mirror_measures} with $u(\mu)=\KL(\mu|\rho)$ and $u'(\mu)=1+\ln(\frac{ \de  \mu}{ \de  \rho})$. 

Cross-concavity is the three-point inequality in mirror descent, see e.g \cite[Lemma 3.2]{chen1993convergence}. It reads similarly to cross-convexity which takes the form: for all $x_0,x\in X$, since $\mathcal{S}_c(x_0)=\{x_0\}$ by strict convexity of $u$,
\begin{equation}\label{eq:bregman_div2}
    [U(x)-\mu u(x)]-[U(x_0)-\mu u(x_0)] \ge \bracket{u'(y_0)-u'(x)-\mu u'(x_0),x-x_0}
\end{equation}
with $y_0\in \mathcal{Q}_c(U,x_0)$. On the other hand, $c$-concavity reads $U(x)=\inf_{y\in  X }c(x,y)+h(y)$ which implies that $U-u$ is concave upper semicontinuous as an infimum of affine functions.

\end{continuance}

We work under the following fundamental hypotheses, which will be progressively reinforced by additional assumptions when needed.

\begin{assumption}\label{ass:ass2}  Let  $c:X \times  X \to \R$, $f:X \to \R$, $g: X \to (-\infty, + \infty]$  be given  and   assume that there exist $\bar{\tau}>0$ and $\lambda_f, \lambda_g \in \R$ such that:
\begin{enumerate}
\item $f$ is $\lambda_f \tau$-strongly $c/\tau$-cross-convex and $c/\tau$-concave for every $0<\tau< \bar{\tau}$.
\item $-g$ is $\lambda_g \tau$-strongly $c/\tau$-cross-concave for every $0<\tau< \bar{\tau}$.
\item for every $x_0 \in X$ there exists $y_0 \in \mathcal{Q}_{c/\tau}(f,x_0)$ and $x_1 \in \mathcal{P}_{c/\tau}(-g,y_0)$ such that $\mathcal{S}_c(x_1)\ne \emptyset$  and $ \mathcal{R}_c(x_1) \ne \emptyset$.
\end{enumerate}
\end{assumption}
Under the above Assumption \ref{ass:ass2}, starting from $x_0 \in X$ we can implement the following scheme: for $i \in  \N $ and $0<\tau< \bar{\tau}$ we define
\begin{equation}\label{eq:scheme2}
x^\tau_0:=x_0, \ \ y_i^\tau \in \mathcal{Q}_{c/\tau}(f,x_i^\tau), \ \ x_{i+1}^\tau \in \mathcal{P}_{c/\tau}(-g, y_i^\tau), \ \ \xi_i^\tau \in \mathcal{S}_c(x_i^\tau), \ \ z_i^\tau \in \mathcal{R}_c(x_i^\tau).
\end{equation}
\begin{figure}[h!]
\begin{tabular}{lccccc}\label{table:scheme_fg}
\hspace{5mm}& $y_{i-1}^\tau$ &  &  &  & $y_{i}^\tau$  \\ 
  & &  $\stackrel{\mathcal{P}_{c/\tau}(-g)}{\searrow}$  &  & $\stackrel{\mathcal{Q}_{c/\tau}(f)}{\nearrow}$ &  \\ 
& &  & $x_i^\tau$  &  &  \\ 
  &  & $\stackrel{\mathcal{R}_c}{\swarrow}$ &  & $\stackrel{\mathcal{S}_c}{\searrow}$  &     \\ 
\hspace{5mm} & $\xi_i^\tau$  &  &  &  & $z_i^\tau$
\end{tabular}
\caption{Diagram of the iterates of the splitting scheme \eqref{eq:scheme2}.
}
\end{figure}

    While $x_{i+1}^\tau \in \mathcal{P}_{c/\tau}(-g, y_i^\tau)$ clearly corresponds to an implicit iteration on $g$, we refer to \cite[Section 3]{leger2023gradient} for the interpretation of $y_i^\tau \in \mathcal{Q}_{c/\tau}(f,x_i^\tau)$ as an explicit iteration on $f$. 
    The intuition for this result is that $y_i^\tau \in \mathcal{Q}_{c/\tau}(f,x_i^\tau)$ is chosen so that the upper bound $f(x)\le\frac{c(x,y_i^\tau)}{\tau} +f^c(y_i^\tau)$ becomes an equality at $x_i^\tau$ under the $c/\tau$-concavity assumption. In differentiable settings, the envelope theorem then gives the relation $\tau \nabla f(x_i^\tau)= \nabla_x c(x_i^\tau,y_i^\tau)$. Hence $y_i^\tau$ is the next iterate of an explicit scheme starting at $x_i^\tau$.  In particular, it is shown in \cite{leger2023gradient} that,  if $X$ is an Euclidean space,  for $c/\tau$-concave $f$, this iteration corresponds to gradient, mirror or Riemmanian descent  for   $c$ taken respectively  to be   the squared Euclidean norm, a Bregman divergence, or a squared Riemannian distance.

\begin{definition}\label{def:interp2} Under Assumption \ref{ass:ass2}, for $0 < \tau < \bar{\tau}$,  given points $(x_i^\tau)_i \subset X$ and $(y_i^\tau)_i, (z_i^\tau)_i \subset  X $ as in \eqref{eq:scheme2}, we define the following interpolating curves 
$\bar{x}^\tau, 
\bar{y}^\tau, \bar{z}^\tau :[0,+\infty) \to  X $ as 
\[ \bar{x}^\tau_t:= x^\tau_{\floor{t/\tau}}, \quad \bar{y}^\tau_t:= y^\tau_{\floor{t/\tau}}, \quad \bar{z}^\tau_t:= z^\tau_{\floor{t/\tau}} \quad t \in [0,+\infty). \]
\end{definition}

 Our aim is to prove the following theorem.

\begin{theorem}[Existence of EVI solutions  via  splitting scheme]\label{thm:existence_limit_scheme_fg}   Let $c$ be a cost satisfying \eqref{eq:addass} with  Assumption \ref{ass:ass2} in place;   suppose in addition that 
\begin{enumerate}
    \item[\rm (1)] $\phi=f+g$ is lower bounded and that $\lambda_f, \lambda_g \ge 0$;
    \item[\rm (2)]\label{it:c_continuous} 
    there exists a  Hausdorff  topology $\sigma$ on $X$ such that $\sigma$ is compatible with $( c ,\phi)$ and  $c$  is $\sigma$-continuous;
    \item[\rm (3)] Either
    \begin{enumerate}
        \item $\sigma$ is Cauchy-compatible with  $( c,\phi)$ and $c$ is symmetric, or
        \item  the sublevel sets  of  $\phi$ are  $\sigma$-sequentially  compact.
    \end{enumerate}
\end{enumerate}
Then, for every $x_0 \in X$ and $\tau \in (0, \bar{\tau})$ the family of curves $(\bar{x}^{\tau/2^n})$ (constructed according to  Definition \ref{def:interp2}  starting from $x_0$) converges (up to a subsequence in case (b)) pointwise (w.r.t.~$\sigma$) as $\tau \downarrow 0$ to a $\sigma$-continuous curve $x:[0,+\infty) \to X$ which satisfies
\begin{multline}
c(x, x_t )-c(x,  x_s  ) + (\lambda_f+\lambda_g)\int_{s}^t c(x, x_r  ) \de r \\
 \le (t-s)\phi(x)-\int_s^t \phi(x_r) \de r \quad \forall \, 0 \le s \le t, \  x \in X.    \label{eq:evi_fg}
\end{multline}
If $\EVI_\lambda(X,c,\phi)$  contains  a unique trajectory $x(\cdot)$ starting from $x_0$, then, $\bar{x}^{\tau}(\cdot)$ $\sigma$-converges pointwise to this $x(\cdot)$, which does not depend on $\tau$. In case (a), we  also have the following error estimate
\begin{equation}\label{eq:error_estimate_f+g}
    c \left (\bar{x}^{\tau}_t, x_t \right ) \le 2\tau (\phi(x_0)-\inf \phi ) \quad \forall t \ge 0, \, \tau \in (0, \bar{\tau}).
\end{equation}
\end{theorem}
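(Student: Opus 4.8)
The plan is to reduce the continuous-time convergence to a careful analysis of the discrete iteration under \Cref{ass:ass2}, following the blueprint of \cite[Section 3.2.4]{Ambrosio2012userguide} and exploiting the dyadic refinement structure. The first task is to derive, from cross-convexity of $f$ and cross-concavity of $-g$, a \emph{discrete EVI}: for the iterates $(x_i^\tau)$ of \eqref{eq:scheme2}, one should obtain an inequality of the form
\begin{equation*}
(1+\lambda_g\tau)c(x,x_{i+1}^\tau) - c(x,x_i^\tau) \le \tau\big(\phi(x) - \phi(x_{i+1}^\tau)\big) - \lambda_f\tau\, c(x,x_i^\tau) + (\text{nonneg. dissipation terms})
\end{equation*}
for all $x\in X$. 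The mechanism is exactly the majorization-minimization \eqref{eq:maj-min}: the $\mathcal{Q}_{c/\tau}$-step gives the explicit descent on $f$ together with the cross-convexity estimate involving $\xi_i^\tau\in\mathcal{S}_c(x_i^\tau)$, and the $\mathcal{P}_{c/\tau}$-step gives the implicit descent on $g$ together with the cross-concavity estimate involving $z_i^\tau\in\mathcal{R}_c(x_i^\tau)$; the two auxiliary points are precisely what is needed to ``weave'' the two half-steps together using $c\ge 0$ (this is where dissipativity \eqref{eq:addass} and the diagram in \Cref{table:scheme_fg} enter). Chaining this telescopically over $i$ and using $\phi\ge\inf\phi$ yields, first, the a priori bound $\sum_i c$-type quantities $\lesssim \phi(x_0)-\inf\phi$, and, by comparing the partition $\{i\tau\}$ with its dyadic refinement $\{i\tau/2^n\}$ and the monotonicity of the scheme, the Cauchy-in-$n$ estimate for $(\bar x^{\tau/2^n})$ that ultimately produces \eqref{eq:error_estimate_f+g}.

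Next, **I would extract the limit curve**. Summing the discrete EVI gives that $t\mapsto c(x,\bar x^{\tau/2^n}_t)$ has uniformly (in $n$) controlled variation and that the interpolants form a $c$-forward-Cauchy (resp.\ bounded-in-a-sublevel) sequence at each fixed $t$; in case (a) one invokes Cauchy-compatibility from \Cref{def:comp}-(3) to get a $\sigma$-limit $x_t$ directly, while in case (b) one uses $\sigma$-sequential compactness of sublevels to extract a subsequential $\sigma$-limit (here lower-boundedness of $\phi$ ensures all iterates stay in a fixed sublevel, since the scheme decreases $\phi$). Passing to the limit in the summed discrete EVI, using joint $\sigma$-lower semicontinuity of $c$ and $\phi$ on the left and $\sigma$-continuity of $c$ (hypothesis (2)) together with Fatou on the right, one arrives at the integral inequality \eqref{eq:evi_fg} with $\lambda=\lambda_f+\lambda_g$; $\sigma$-continuity of the limit follows from the oriented Lipschitz bound \eqref{eq:Lip_c} built into the estimate, exactly as in \Cref{thm:equiv}. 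Then \eqref{eq:evi_fg} is precisely the integral EVI \eqref{eq:evi_int} down to $t=0$, so by \Cref{thm:equiv} (and \Cref{rem:t0}, using that $c$ is separately $\sigma$-continuous here) $x(\cdot)\in\EVI_{\lambda_f+\lambda_g}(X,c,\phi)$.

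For the **uniqueness/full-convergence statement**: if $\EVI_\lambda(X,c,\phi)$ has a unique trajectory from $x_0$, then every subsequential limit of $(\bar x^{\tau/2^n})$ coincides with it, hence the whole family converges and the limit is $\tau$-independent; combined with \eqref{eq:error_estimate_f+g} in case (a) one upgrades to $\sigma$-convergence of the full net $\bar x^\tau$ as $\tau\downarrow 0$ (the error estimate shows $c(\bar x^\tau_t,x_t)\to 0$, then compatibility \Cref{def:comp}-(1) gives $\sigma$-convergence, provided the $\bar x^\tau_t$ stay in a sublevel of $\phi$, which they do).

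**The main obstacle** I expect is the derivation of the discrete EVI with the correct constants and signs, and in particular verifying that the dissipation terms produced by the two half-steps are genuinely nonnegative so they can be discarded --- this is delicate because cross-convexity and cross-concavity are \emph{not} symmetric notions and involve different auxiliary minimizers ($\mathcal{S}_c$ vs.\ $\mathcal{R}_c$, $\mathcal{Q}_c$ vs.\ $\mathcal{P}_c$), so the bookkeeping of which inequality is applied at which point, and the use of $c/\tau$-concavity of $f$ to ensure the majorization \eqref{eq:maj-min} is tight along the iterates, must be done with care. A secondary technical point is the dyadic-refinement comparison: one must check that refining the partition does not increase the relevant energy-type quantities, which relies on the monotonicity of $i\mapsto\phi(x_i^\tau)$ along the scheme and on a ``restart'' property (the scheme started from $x_i^\tau$ agrees with the tail of the scheme started from $x_0$), analogous to \cite[Section 3.2.4]{Ambrosio2012userguide}.
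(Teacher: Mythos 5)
Your proposal is correct and follows essentially the same route as the paper: a discrete EVI obtained by weaving the $c/\tau$-concavity and cross-convexity of $f$ with the cross-concavity of $-g$ through the auxiliary points in $\mathcal{S}_c$ and $\mathcal{R}_c$, telescoping to get monotonicity of $\phi$ along the iterates, a dyadic comparison using the symmetry of $c$ for the Cauchy and error estimates in case (a), and a limit passage (Cauchy-compatibility in case (a); compactness together with the uniform variation bound and a Helly-type extraction in case (b)) that uses the $\sigma$-continuity of $c$, exactly as in the paper's splitting-scheme lemmas and the proof of the implicit-scheme theorem. The only cosmetic difference is that in the paper's discrete EVI for the splitting scheme the dissipation terms cancel outright rather than appearing as extra nonnegative slack, but this bookkeeping detail does not affect the argument.
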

  The proof of Theorem \ref{thm:existence_limit_scheme_fg} is deferred to Section \ref{sec:f+gproof}. In terms of assumptions, the key differences with \Cref{sec:EVI} are that we assume $\phi$ to be lower bounded in order to have a lower bound on $(\phi(x^\tau_n))_n$ independent of $\tau$ and $n$. We also require $c$ to be $\sigma$-continuous so as to take limits in $-c(x,\bar z^\tau_t)$, where lower semicontinuity of $c$ would not  be sufficient  to conclude.

 Before moving on, let us mention that the assumptions of Theorem \ref{thm:existence_limit_scheme_fg} may be weakened in some specific situations. A specific setting where this seems to be possible is the case in which $c$ dominates a (power of a) distance $d^2$. In this case, the a-priori bounds obtained below (see \eqref{eq:intevi}, for instance) would guarantee that the approximating trajectories are equicontinuous w.r.t to $d^2$. In combination with a compactness assumption on the sublevels of $\phi$, this would allow to use an Ascoli--Arzel\`a argument to obtain convergence. 
In the linear-space setting of $X$ being a Hilbert or a Banach space, one may even avoid asking for the compactness of the sublevel sets of $\phi$ and alternatively handle the limit passage by lower-semicontinuity arguments. We give below an application of Theorem \ref{thm:existence_limit_scheme_fg} to \Cref{ex:KL}. 
\begin{corollary}\label{coro:KL_flow}
    Let $\X\subset \R^d$ be a closed subset, $0<a<b<+\infty$, $\rho\in \calP(\X)$, and let $X=\{\mu\in \calP(\X) \, | \, \text{$\mu =h\rho$ with $h(\cdot)\in [a,b]$ $\rho$-a.s.} \}$. Let $\sigma_\rho$ be the  strong  $L^1(\rho)$ topology on $X$, and set $c=\KL$. Let $f,g: X \to \R$ be convex, $\sigma_\rho$-lower semicontinuous, and such that $g$ has $\sigma_\rho$ compact sublevel sets  and  is lower-bounded and $f$ is lower bounded. Assume furthermore that there exists $\bar{\tau}>0$ such that for every $0<\tau< \bar{\tau}$, there exists $h_\tau:X\to (-\infty,+\infty]$  satisfying for all $\mu \in X$ 
    \begin{equation}
        \frac{\KL(\mu | \rho)}{\tau} -f(\mu)=\max_{\mu'\in X} \left\langle \mu,\frac{1}{\tau}\ln\left(\frac{ \de \mu'}{ \de \rho}\right)\right\rangle -h_\tau(\mu').\label{eq:KL_c-concavity}
    \end{equation}
    Then, for every $x_0 \in X$ and $\tau \in (0, \bar{\tau})$ the family of curves $(\bar{x}^{\tau/2^n})$ (constructed according to  Definition \ref{def:interp2}  starting from $x_0$) converges up to a subsequence pointwise (w.r.t.~$\sigma$) as $\tau \downarrow 0$ to a $\sigma$-continuous curve $x:[0,+\infty) \to X$  satisfying  \eqref{eq:evi_fg}.
\end{corollary}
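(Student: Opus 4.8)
The plan is to verify that, taking $\sigma=\sigma_\rho$, $\bar\tau$ as in \eqref{eq:KL_c-concavity}, and $\lambda_f=\lambda_g=0$, all the hypotheses of \Cref{thm:existence_limit_scheme_fg} are in force; its conclusion then gives \eqref{eq:evi_fg}. Since $\KL$ is not symmetric, only case~(b) of \Cref{thm:existence_limit_scheme_fg} is available, which is why we obtain convergence only up to a subsequence and no error estimate of the type \eqref{eq:error_estimate_f+g}.

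\textbf{Topological hypotheses.} The cost $c=\KL$ satisfies \eqref{eq:addass} by Gibbs' inequality. By the continuation of \Cref{ex:KL}, $X$ identifies with a $\sigma_\rho$-closed subset of $L^1(\rho)$ on which $\KL$ is finite and jointly $\sigma_\rho$-continuous, with $\sigma_\rho$ Hausdorff (indeed metrizable), and $\sigma_\rho$ is compatible with $(c,\psi)$ for every $\sigma_\rho$-lower semicontinuous $\psi$. In particular $\phi=f+g$ is $\sigma_\rho$-lower semicontinuous, so hypothesis~(2) of \Cref{thm:existence_limit_scheme_fg} holds, while hypothesis~(1) is immediate since $f$ and $g$ are lower bounded. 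For hypothesis~(3)(b): one has $\{\phi\le r\}\subseteq\{g\le r-\inf f\}$ (and $\inf f>-\infty$), which is $\sigma_\rho$-compact by assumption; being moreover $\sigma_\rho$-closed and $\sigma_\rho$ being metrizable, the sublevels of $\phi$ are $\sigma_\rho$-sequentially compact.

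\textbf{Verification of \Cref{ass:ass2}.} As recalled in the continuation of \Cref{ex:KL}, $c=\KL$ is the Bregman divergence of $u:=\KL(\cdot\,|\,\rho)$ in the sense of \eqref{eq:bregman_div}, with $u'(\mu)=1+\ln(\de\mu/\de\rho)$; since the $\KL(\mu'\,|\,\rho)$ and constant terms cancel (both $\mu$ and $\mu'$ having unit mass), one gets for $\mu,\mu'\in X$ and $\tau>0$
\begin{equation*}
\frac{\KL(\mu\,|\,\mu')}{\tau}=\frac{\KL(\mu\,|\,\rho)}{\tau}-\left\langle\mu,\frac1\tau\ln\frac{\de\mu'}{\de\rho}\right\rangle .
\end{equation*}
Plugging this identity into \eqref{eq:KL_c-concavity} rewrites that hypothesis as $f(\mu)=\min_{\mu'\in X}\left[\frac{c(\mu,\mu')}{\tau}+h_\tau(\mu')\right]$, i.e.\ $f$ is $c/\tau$-concave for $0<\tau<\bar\tau$ with the infimum attained, which is the $c/\tau$-concavity in \Cref{ass:ass2}(1). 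For the cross-convexity of $f$ and the cross-concavity of $-g$ we use the Bregman three-point identity together with $\mathcal{S}_c(x_0)=\{x_0\}$ and $\mathcal{R}_c(x)=\{x\}$ (both forced by \eqref{eq:addass}): a short computation reduces the $0$-strong $c/\tau$-cross-convexity of $f$ to the inclusion $\frac1\tau\big(u'(x_0)-u'(y_0)\big)\in\partial f(x_0)$ for $y_0\in\mathcal{Q}_{c/\tau}(f,x_0)$, and the $0$-strong $c/\tau$-cross-concavity of $-g$ to $\frac1\tau\big(u'(y_0)-u'(x_1)\big)\in\partial g(x_1)$ for $x_1\in\mathcal{P}_{c/\tau}(-g,y_0)$. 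These are the familiar three-point inequalities of mirror descent (cf.~\cite{chen1993convergence}, \cite{leger2023gradient}, the continuation of \Cref{ex:KL}, and \Cref{sec:compatibility}) and follow from the convexity of $f$, resp.\ $g$, via the first-order optimality conditions characterizing $\mathcal{Q}_{c/\tau}(f,x_0)$ and $\mathcal{P}_{c/\tau}(-g,y_0)$. Finally, for \Cref{ass:ass2}(3): $\mathcal{S}_c(x_1)=\{x_1\}$ and $x_1\in\mathcal{R}_c(x_1)$ are automatic from \eqref{eq:addass}; $\mathcal{Q}_{c/\tau}(f,x_0)\neq\emptyset$ because the maximizer $\mu'$ in \eqref{eq:KL_c-concavity} at $\mu=x_0$ belongs to it (one checks that $f^{c/\tau}(\mu')=h_\tau(\mu')$ there, so $\mu'$ attains the infimum defining $f(x_0)$); and $\mathcal{P}_{c/\tau}(-g,y_0)\neq\emptyset$ because $x\mapsto\frac{c(x,y_0)}{\tau}+g(x)$ is proper ($\KL(\cdot\,|\,y_0)<+\infty$ on $X$), $\sigma_\rho$-lower semicontinuous, and has $\sigma_\rho$-compact sublevels (contained in sublevels of $g$), hence attains its minimum.

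\textbf{Invoking the theorem, and the main obstacle.} With \Cref{ass:ass2} and hypotheses (1)--(3)(b) in force, \Cref{thm:existence_limit_scheme_fg} applies and produces the curve $x(\cdot)$ satisfying \eqref{eq:evi_fg}, as claimed. The delicate point, deserving the most care, is the reduction of the cross-convexity/cross-concavity to the two subdifferential inclusions above: because $X$ carries the pointwise constraints $a\le\de\mu/\de\rho\le b$, the minimizers defining $\mathcal{Q}_{c/\tau}$ and $\mathcal{P}_{c/\tau}$ may sit on the boundary of $X$, so those inclusions must be read in the subdifferential of $f+\iota_X$ (resp.\ $g+\iota_X$), i.e.\ up to a normal-cone term. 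This is still sufficient, since the cross-convexity inequalities are only tested at points of $X$, but it requires the sum rules for subdifferentials of convex functions on $L^1(\rho)$, for which one relies on $u=\KL(\cdot\,|\,\rho)$ being finite and continuous on the bounded closed convex set $X$.
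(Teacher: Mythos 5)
Your proof follows the same overall route as the paper's: verify hypotheses (1)--(3)(b) of \Cref{thm:existence_limit_scheme_fg}, verify \Cref{ass:ass2}, and invoke the theorem. The topological part, the nonemptiness of $\mathcal{P}_{c/\tau}(-g,y_0)$ via the compact sublevels of $g$, and the rewriting of \eqref{eq:KL_c-concavity} as attained $c/\tau$-concavity through the identity $\KL(\mu|\mu')=\KL(\mu|\rho)-\langle\mu,\ln\frac{\de\mu'}{\de\rho}\rangle$ all match the paper. Your treatment of the implicit step is also sound: for $-g$, convexity of $g$ and minimality of $x_1$ over the convex set $X$ give, via directional derivatives along segments (legitimate here because densities lie in $[a,b]$), exactly the three-point inequality; this is in substance the paper's route through \Cref{lem:ced_cross-concave} with $\gamma(t)=(1-t)x_1+tx$.

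The gap is in the explicit step. You assert that the $0$-strong $c/\tau$-cross-convexity of $f$, i.e.\ $\frac1\tau(u'(x_0)-u'(y_0))\in\partial(f+\iota_X)(x_0)$, ``follows from the convexity of $f$ via the first-order optimality conditions characterizing $\mathcal{Q}_{c/\tau}(f,x_0)$''. But the condition $y_0\in\mathcal{Q}_{c/\tau}(f,x_0)$ is (given the attained $c/\tau$-concavity) equivalent to the statement that $x_0$ attains the supremum defining $f^{c/\tau}(y_0)$, i.e.\ to the \emph{upper} bound $f(x)-f(x_0)\le\frac1\tau\langle u'(x_0)-u'(y_0),x-x_0\rangle+\frac{\KL(x|x_0)}{\tau}$ for all $x\in X$, which is the inclusion $\frac{u'(y_0)}{\tau}\in\partial\bigl(\frac{u}{\tau}-f+\iota_X\bigr)(x_0)$. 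Cross-convexity is the reverse, \emph{lower} bound, and convexity of $f$ alone does not bridge the two when $x_0$ lies on the relative boundary of the band $X$: there the comparison of the two inclusions only yields $\frac1\tau(u'(x_0)-u'(y_0))\in\partial(f+\iota_X)(x_0)-N_X(x_0)$, with the normal cone entering with the wrong sign, and this is not repaired by the remark that the inequality is only tested at points of $X$. Indeed, in a two-atom example ($\X$ with two points, $f$ convex and $C^1$ with nonzero slope at a point $x_0$ whose density touches $a$, all hypotheses of the corollary satisfied) the set $\mathcal{Q}_{c/\tau}(f,x_0)$ is a whole interval of mirror points and only its extreme element satisfies the required inequality; for the other elements the cross-convexity inequality fails in the direction pointing into $X$. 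This is exactly the point where the paper does \emph{not} rely on a bare optimality condition: it verifies \Cref{ass:ced_cross-convex} and applies \Cref{lem:ced_cross-convex}, using the segment $\gamma(t)=(1-t)x_0+tx$ (along which the Bregman structure makes \eqref{eq:ced_f_c} an identity and convexity of $f$ gives \eqref{eq:ced_f}) together with a curve $z(t)$ of witnesses of the attained $c/\tau$-concavity \eqref{eq:KL_c-concavity} with $z(0)=y_0$ and a continuity condition as $t\downarrow0$; it is this curve of mirror points along $\gamma$ that produces the subdifferential inclusion you need, and your argument omits it. To close the gap you should either reproduce this witness-curve argument (equivalently, restrict the scheme to the selection of $y_0$ furnished by the maximizer in \eqref{eq:KL_c-concavity}) or justify separately why every element of $\mathcal{Q}_{c/\tau}(f,x_0)$ works in your setting.
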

\Cref{coro:KL_flow}  proves  that the limit curve belongs $\EVI_\lambda(X,c, \phi)$. Based on the discussion around \eqref{eq:evi_local2}, taking derivatives $\nabla_{2,1}$ in \eqref{eq:bregman_div}, this flow  formally  corresponds to the mirror flow $$-[\nabla^2 (\KL(\cdot | \rho))(x_{t_0})]\dot x_{t_0} \in \partial \phi(x_{t_0})$$ restricted to the set $X$.

\begin{proof}
We just have to check the assumptions of Theorem \ref{thm:existence_limit_scheme_fg}. Since $c$ clearly  satisfies  \eqref{eq:addass} and (1), (2) and (3){\it b}  hold,  it only remains to check Assumption \ref{ass:ass2}.
Since  the sublevels  of $g$ are compact for  $\sigma_\rho$  and $g(\cdot)+\frac{\KL(\cdot| \mu_0)}{\tau}$ is $\sigma_\rho$-lower semicontinuous for all $\mu_0 \in X$, the set $\calP_{c/\tau}(-g,y_0)$ is  nonempty.  We use \Cref{lem:ced_cross-concave,lem:ced_cross-convex} below, along with the discussion  in Example \ref{ex:KL_sec3} below  on $c=\KL$. Equation \eqref{eq:KL_c-concavity} is just a rewriting of the $c/\tau$-concavity of \eqref{eq:def_c_concav}.
\end{proof}

\subsection{Sufficient conditions: compatibility of energy and cost}\label{sec:compatibility}

We now provide some more readable sufficient assumptions, inspired by \cite[Section 3.2.4]{Ambrosio2012userguide}, also known as C2G2 (Compatible Convexity along Generalized Geodesics) in \cite{Santambrogio2017}.

\begin{assumption}[Compatibility to obtain cross-concavity]\label{ass:ced_cross-concave}
The function $g:X \to (-\infty, + \infty]$ is such that there exists $\lambda\in\R$ such that, for any $y_0\in Y$ and $x_1\in X$ and $y_1\in \calR_c(x_1)$, and for all $x\in X$, there exists functions $\gamma:[0,1]\to X$ and $M:[0,1]\to (-\infty, +\infty]$ such that, for every $t\in[0,1]$
    \begin{align}
        g(\gamma(t)) &\le (1-t)g(x_1)+tg(x)-\lambda t [c(x,y_1)- c(x_1,y_1)]+M_t, \label{eq:ced_g} \\
        c(\gamma(t),y_0) &\le (1-t)c(x_1,y_0)+tc(x,y_0)- t [c(x,y_1)-c(x_1,y_1)]+M_t \label{eq:ced_g_c} 
    \end{align}
    with $\liminf_{t\downarrow 0}\frac{M_t}{t}=0$.  Specifically,  $\gamma(\cdot)$  fulfills 
    \begin{align}
        \liminf_{t\downarrow 0}\frac{g(\gamma(t))-g(x_1)}{t} &\le g(x)-g(x_1)-\lambda [c(x,y_1)- c(x_1,y_1)], \label{eq:ced_g2} \\
        \liminf_{t\downarrow 0}\frac{c(\gamma(t),y_0)-c(x_1,y_0)}{t} &\le c(x,y_0)-c(x_1,y_0)- [c(x,y_1)-c(x_1,y_1)]. \label{eq:ced_g_c2} 
    \end{align}
\end{assumption}

\begin{assumption}[Compatibility to obtain cross-convexity]\label{ass:ced_cross-convex}
   There exists $\bar \tau>0$ such that $f$ is $c/\tau$-concave for all $\tau\in (0,\bar{\tau})$. There exists $\lambda\in\R$ such that, for any $x_0\in X$, and for all $x\in X$ and $\tau\in (0,\bar{\tau})$, $\xi_0\in \mathcal{S}_c(x_0)$, $y_0\in \calQ_{c/\tau}(f,x_0)$, there exists functions $\gamma:[0,1]\to X$, $z:[0,1]\to \YY$ and $M:[0,1]\to (-\infty, +\infty]$ such that $\gamma(0)=x_0$, $z(0)=y_0$ and, for every $t\in[0,1]$, $f(\gamma(t))-f^{c/\tau}(z(t))=\frac{1}{\tau}c(\gamma(t),z(t))$ and
    \begin{align}
        f(\gamma(t)) &\le (1-t)f(x_0)+tf(x)-\lambda t[c(x,\xi_0)-c(x_0,\xi_0)]+M_t, \label{eq:ced_f} \\
        c(\gamma(t),z(t)) &\ge (1-t)c(x_0,z(t))+tc(x,z(t))-t[c(x,\xi_0)-c(x_0,\xi_0)]-M_t, \label{eq:ced_f_c} 
    \end{align}
     with $\liminf_{t\to 0} [c(x,z(t))-c(x_0,z(t))]\le c(x,z(0))-c(x_0,z(0))$ and $\liminf_{t\downarrow 0}\frac{M_t}{t}=0$.
\end{assumption}
\begin{remark}\label{rmk:compatibility} In  the   analysis, it is paramount to choose which functions should be convex along which curves.  Nonetheless, we mostly use assumptions for $t \to 0$ only.  
By introducing the function $t \mapsto M_t$ we intend to cover both convexity of $t\mapsto \phi(\gamma_t) - \lambda t^2 c(x,x_0)$ and of $t\mapsto \phi(\gamma_t) - \lambda c(\gamma_t,x_0)$ as already discussed around  \eqref{eq:convexity_g_c} after \Cref{prop:evi_local_2}. For instance, for \eqref{eq:ced_g}--\eqref{eq:ced_g_c} and $p>1$, taking $M_t= \max(1,\lambda) t^p[c(x,y_1)- c(x_1,y_1)]$, we recover the $(p,\lambda)$-convexity of Ohta and Zhao in the case $c=d^p$ \cite[Definition 4.1]{Ohta2023}. Actually summing \eqref{eq:ced_g} and \eqref{eq:ced_g_c} multiplied by $\nicefrac{1}{\tau}$, one gets a form of local convexity of $g(\cdot)+\frac{c(\cdot,y_0)}{\tau}$ in the spirit of \cite[Assumption 4.0.1]{ags08} or its analogue \cite[Assumption 4.3]{Ohta2023} which both give the discrete EVI with the same proof as below.

\end{remark}

\begin{lemma}\label{lem:ced_cross-concave}
     Let $\tau >0$ and assume that for all $y_0\in \YY$ the set $\calP_{c/\tau}(-g,y_0)$ is nonempty. Then, \Cref{ass:ced_cross-concave} implies the $\lambda\tau$-strong $c/\tau$-cross-concavity of $-g$.
\end{lemma}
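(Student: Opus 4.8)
The plan is to read off the required inequality — the defining inequality of $\lambda\tau$-strong $c/\tau$-cross-concavity of $U=-g$ — directly from the two convexity estimates \eqref{eq:ced_g}--\eqref{eq:ced_g_c} of \Cref{ass:ced_cross-concave} combined with the minimality built into $\calP_{c/\tau}$. First I would fix $y_0 \in Y$; since the cross-concavity condition only constrains $x_1$ ranging over $\calP_{c/\tau}(-g,y_0)$, which is nonempty by hypothesis, I pick $x_1 \in \calP_{c/\tau}(-g,y_0) = \argmin_{x'\in X}\{\tfrac1\tau c(x',y_0)+g(x')\}$, observe that $\calR_{c/\tau}(x_1)=\calR_c(x_1)$ because rescaling the cost by $1/\tau>0$ does not change argmins, pick $y_1 \in \calR_c(x_1)$, and fix a test point $x \in X$. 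The case $g(x)=+\infty$ makes the target inequality trivial, so I may assume $g(x)<+\infty$, whence $g(x_1)<+\infty$ by minimality of $x_1$.

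Next I would invoke \Cref{ass:ced_cross-concave} for this triple $(y_0,x_1,y_1)$ and this $x$ to obtain the curve $\gamma:[0,1]\to X$ and the function $M:[0,1]\to(-\infty,+\infty]$ with $\liminf_{t\downarrow0}M_t/t=0$ such that \eqref{eq:ced_g} and \eqref{eq:ced_g_c} hold for every $t$. The crucial manipulation is to combine these with the minimality of $x_1$ \emph{at a fixed $t$}, before passing to the limit: minimality gives $g(x_1)-g(\gamma(t))\le \tfrac1\tau[c(\gamma(t),y_0)-c(x_1,y_0)]$ for all $t$; I then bound the left-hand side from below using \eqref{eq:ced_g} and the right-hand side from above using \eqref{eq:ced_g_c}. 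Writing $P:=g(x)-g(x_1)-\lambda[c(x,y_1)-c(x_1,y_1)]$ and $Q:=c(x,y_0)-c(x_1,y_0)-[c(x,y_1)-c(x_1,y_1)]$, this chain becomes $-tP-M_t \le \tfrac1\tau(tQ+M_t)$ for every $t\in(0,1]$; dividing by $t>0$ gives $-P-\tfrac1\tau Q \le (1+\tfrac1\tau)\,M_t/t$, and since the left-hand side is a constant, taking $\liminf_{t\downarrow0}$ and using $\liminf M_t/t=0$ yields $-P-\tfrac1\tau Q\le 0$. Unpacking $P$ and $Q$, this reads $g(x_1)-g(x)\le \tfrac1\tau[c(x,y_0)-c(x_1,y_0)]-(\tfrac1\tau+\lambda)[c(x,y_1)-c(x_1,y_1)]$, which, on noting $(-g)(x)-(-g)(x_1)=g(x_1)-g(x)$ and $\lambda\tau\cdot\tfrac1\tau c(\cdot,\cdot)=\lambda c(\cdot,\cdot)$, is exactly the defining inequality of $\lambda\tau$-strong $c/\tau$-cross-concavity of $-g$ evaluated at $(x,y_0,x_1,y_1)$.

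The main (and essentially only) delicate point is the order of limits: one should not start from the $\liminf$-forms \eqref{eq:ced_g2}--\eqref{eq:ced_g_c2}, since the two $\liminf$s could a priori be realized along different sequences $t_k\downarrow0$; instead one must keep \eqref{eq:ced_g}, \eqref{eq:ced_g_c} and the minimality inequality tied to the same $t$ and the same $\gamma,M$, collapse them into the single scalar estimate $-P-\tfrac1\tau Q\le(1+\tfrac1\tau)M_t/t$, and only then send $t\downarrow0$. This also painlessly accommodates the possibility $M_t=+\infty$ for some $t$, since only the sequence realizing $\liminf M_t/t=0$ is used. The remaining bookkeeping of $+\infty$-values of $g$ is disposed of by the trivial-case reduction described above.
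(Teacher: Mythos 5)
Your proof is correct and is essentially the paper's own argument: both combine \eqref{eq:ced_g} and $\tfrac1\tau\cdot$\eqref{eq:ced_g_c} with the minimality of $x_1\in\calP_{c/\tau}(-g,y_0)$ at a fixed $t$, cancel the $t$-independent terms, divide by $t$, and send $t\downarrow 0$ using $\liminf_{t\downarrow 0}M_t/t=0$. Your extra remarks (working with the $t$-dependent inequalities rather than \eqref{eq:ced_g2}--\eqref{eq:ced_g_c2}, and noting $\calR_{c/\tau}=\calR_c$) are sound and consistent with the paper's proof.
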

\begin{proof}
    Fix $ x_1 \in \calP_{c/\tau}(-g,y_0)$, multiply \eqref{eq:ced_g_c} by $1/\tau$ and sum with \eqref{eq:ced_g} to obtain
    \begin{align}
        g(x_1) +\frac{1}{\tau}c(x_1,y_0) \nonumber
        &\le g(\gamma(t)) +\frac{1}{\tau}c(\gamma(t),y_0)\\
        &\le (1-t)g(x_1)+tg(x)-\lambda t c(x,y_1) \nonumber \\
        &\hspace{5mm}+\frac{1-t}{\tau}c(x_1,y_0)+\frac{t}{\tau}c(x,y_0)- \frac{t}{\tau} [c(x,y_1)- c(x_1,y_1)]+(1+\frac{1}{\tau})M_t
    \end{align}
    Observe that the terms not depending on $t$ simplify, divide by $t$, and take the limit $t\to 0$. We obtain that, for every $x\in X$ and every $x_1 \in \mathcal{P}_c(-g,y_0)$  and   $y_1 \in \mathcal{R}_c(x_1)$,
    \begin{equation}
        g(x_1) +\frac{1}{\tau}c(x_1,y_0)\le g(x)+\frac{1}{\tau}c(x,y_0)-\frac{\lambda\tau+1}{\tau}[c(x,y_1)- c(x_1,y_1)]
    \end{equation}
    which is precisely the cross-concavity of $-g$.
\end{proof}

\begin{lemma}\label{lem:ced_cross-convex}
    \Cref{ass:ced_cross-convex} implies the $c/\tau$-cross-convexity of $f$ for $\tau \in (0,\bar{\tau})$.
\end{lemma}
\begin{proof}
    Fix $\tau \in (0,\bar{\tau})$. Use that $f(x_0)-\frac{c(x_0,z(t))}{\tau}\le f^{c/\tau}(z(t))=f(\gamma(t))-\frac{c(\gamma(t),z(t))}{\tau}$, multiply \eqref{eq:ced_f_c} by $1/\tau$ and sum with \eqref{eq:ced_f} to obtain
    \begin{multline}
    f(x_0)\le 
        f(\gamma(t))-\frac{c(\gamma(t),z(t))}{\tau}+\frac{c(x_0,z(t))}{\tau}\\
        \le (1-t)f(x_0)+tf(x)+\frac{1-\lambda\tau}{\tau} t[c(x,\xi_0)-c(x_0,\xi_0)]\\
        +\frac{t}{\tau}c(x_0,z(t))-\frac{t}{\tau}c(x,z(t))+(1+\frac{1}{\tau})M_t.
    \end{multline}
    The term $f(x_0)$ cancels out. We then divide by $t$ and take the limit $t\to 0$. We obtain that, for every $x \in X$, every $\xi_0\in \mathcal{S}_c(x_0)$,  and  $y_0=z(0) \in \calQ_{c/\tau}(f,x_0)$,
    \begin{equation}
        f(x_0)\le f(x)+\frac{1-\lambda\tau}{\tau}[c(x,\xi_0)-c(x_0,\xi_0)]+\frac{1}{\tau}\liminf_{t\to 0}[c(x,z(t))-c(x_0,z(t))].
    \end{equation}
  As  the last term is bounded by $( c(x,z(0))-c(x_0,z(0)))$,  this proves  the cross-convexity of $f$.
\end{proof}

\begin{remark}[Explicit vs. implicit] Notice that the existence of $z(t)$ in \Cref{ass:ced_cross-convex} implies that $f$ is $c/\tau$-concave over $\gamma([0,1])$, but since $x_0=\gamma(0)$ is left free, we should nevertheless request $c/\tau$-concavity over the whole $X$. Moreover, the $c/\tau$-concavity of $f$ does not imply the existence of $z(t)$ which we hence have to assume. Informally, this second trajectory $z(t)$, the smoothness required by the $\liminf$, and the role of $\tau$ do not appear in \Cref{ass:ced_cross-concave}  as  implicit schemes for $g$  do not rely on smoothness  and require only a lower bound on the Hessian of $g$.  On the contrary,  
explicit schemes  require  smoothness and the Hessian of $f$ is assumed to fulfill a lower and an upper bound.    We refer to \cite{leger2023gradient} for more  details  on this aspect.
\end{remark}

\begin{continuance}{ex:distance} For geodesic metric spaces $(X,d)$ and $c=d^2$, \eqref{eq:ced_g_c} and \eqref{eq:ced_f_c} are implied by Alexandrov curvature conditions, for $\gamma(\cdot)$  being  a geodesic between $x_0$ and $x$.

 In particular, if $X$ is a Hilbert space and $c(x,y)=\frac{\|x-y\|^2}{2}$, this  implies that  \eqref{eq:ced_f_c}, with $c(x_0,z(0))=0$, and \eqref{eq:ced_g_c}, with $c(x_1,z_1)=0$,  are both  satisfied and  turn   into equalities. We then see that \eqref{eq:ced_f} and \eqref{eq:ced_g} are nothing  but  the usual $\lambda$-convexity of $f$ or $g$ for  $M_t=t^2[c(x,y_1)- c(x_1,y_1)]$. If $X$ is  a smooth reflexive Banach space, then \Cref{ass:ced_cross-concave}  for $g={\|\cdot\|^2}/{2}$, setting $f=0$ (resp.\ \Cref{ass:ced_cross-convex} for $f={\|\cdot\|^2},{2}$, setting $g=0$), implies that $X$ is Hilbert. Indeed, using \Cref{thm:existence_limit_scheme_fg}  one can find a trajectory in  $\EVI_\lambda(X,c, g)$ starting from any $x_0\in X$,  and we  can apply \cite[Corollary 4.5]{vonRenesse2012}.

  When considering a geodesic space $X$ with nonzero curvature, an interesting phenomenon arises. If $X$ is nonpositively curved (NPC) in the sense of Alexandrov, we can consider a geodesic $\gamma(t)$ for \Cref{ass:ced_cross-concave}, but not for \Cref{ass:ced_cross-convex}; the opposite holds if $X$ is positively curved (PC). In other words, when $X$ is NPC, every $\lambda$-convex function $g$ satisfies \Cref{ass:ced_cross-concave}, and one finds an EVI solution by \Cref{thm:existence_limit_scheme_fg}, as also stated in \cite[Theorem 3.14]{MuratoriSavare}. Conversely, in PC spaces, \Cref{ass:ced_cross-concave} implies that $-g$ is $c/\tau$-cross-concave for $\tau \in (0, 1/\lambda^-)$, which in turn implies that $g$ is $\lambda$-convex, as shown in \cite[Proposition 4.14]{leger2023gradient}.

If $X$ is the Wasserstein space equipped with the distance $W_2$, which is positively curved, \cite[Remark 9.2.8]{ags08} discusses how \Cref{ass:ced_cross-concave} with $M_t = t^2[c(x,y_1)- c(x_1,y_1)]$ corresponds to convexity along generalized geodesics. This assumption implies \cite[Assumption 4.0.1]{ags08}, which leads to the discrete EVI and consequently to the continuous EVI. However, they also emphasize that convexity along generalized geodesics is a stronger requirement, as it implies convexity along standard geodesics.
Note that \eqref{eq:ced_g_c} holds in both NPC spaces (along geodesics) and in NNCC spaces (along variational $d^2$-segments), with NNCC being a subclass of PC spaces.

In summary, explicit schemes are particularly well-suited for positively curved spaces and $c$-concave, $\lambda$-convex functions (which are therefore $c$-cross-convex) while in NPC spaces, $\lambda$-convex functions are more naturally handled via implicit schemes. The strong focus on implicit schemes in the Wasserstein space is likely motivated by the fact that some relevant functionals, such as the entropy, show some convexity but are not $W_2^2$-concave. Furthermore, as our discussion suggests, implicit schemes are conceptually simpler: they require only a single bound and demand less regularity. Finally, as shown in \cite[Theorem 3.11]{leger2024nonnegative}, the Wasserstein space is NNCC (as defined in \Cref{def:nncc}), so its structure is shaped not only by its positive curvature but also by its NNCC nature.
\end{continuance}

\begin{continuance}{ex:KL}\label{ex:KL_sec3} For $c$ as in \eqref{eq:bregman_div} with $X$ convex, in particular $c=\KL$, taking $\gamma(t)=(1-t)x_1+tx$, one can argue similarly to \cite[Example 2.9]{leger2024nonnegative} to show that $(X\times X,c)$ satisfies \Cref{def:nncc} with \eqref{eq:NNCC-ineq}  being  an equality. So \eqref{eq:ced_g_c} and \eqref{eq:ced_f_c} hold with equality. Taking $f$ and $g$ convex will then give \eqref{eq:ced_g} and \eqref{eq:ced_f}. The $c/\tau$-concavity on $f$ implies that $\frac{u}{\tau}-f$ is convex l.s.c., the existence of $z(t)$ corresponds to having a nonempty subdifferential at all points.

\end{continuance}

\begin{continuance}{ex:Sinkhorn} For $\Seps$, preliminary work following \cite{lavenant2024riemannian} seems to indicate that potential functionals of the form $\mathcal{V}(\mu)=\int V(x)d\mu(x)$ with $V$ convex $C^1$  are  candidates for satisfying \Cref{ass:ced_cross-concave}~or~\ref{ass:ced_cross-convex}.  A rigorous verification of this fact remains an open problem at this stage. 

\end{continuance}

\subsection{Existence of the flow for implicit schemes}\label{sec:f=0}

 In this section, we focus  on implicit iterations, i.e.\ $f=0$, and may hence  simplify \Cref{ass:ass2}  as follows. 

\begin{assumption}\label{ass:ass1}  Let  $c:X \times \YY\to \R$ and $g: X \to \R \cup {+\infty}$  be given  and  assume   that there exist $\bar{\tau}>0$ and $\lambda \ge 0$ such that
\begin{enumerate}
\item $-g$ is $\lambda \tau$-strongly $c/\tau$-cross-concave for every $0<\tau< \bar{\tau}$;
\item for every $x_0 \in X$ and every $0< \tau<\bar{\tau}$ there exists some $y_0 \in \mathcal{R}_c(x_0)$ such that $\mathcal{P}_{c/\tau}(-g,y_0) \ne \emptyset$.
\end{enumerate}
\end{assumption}

Under the above Assumption \ref{ass:ass1},  given an arbitrary starting point $x_0 \in X$ and $\tau \in (0,\bar{\tau})$ we consider the following scheme, which corresponds to \eqref{eq:scheme2} with $z_i^\tau=y_i^\tau$ since $f=0$.

\begin{definition}[Scheme for $\phi=g$]\label{def:scheme}
For all  $i \in  \N $ we iteratively define
\begin{equation}\label{eq:scheme}
x^\tau_0:=x_0, \quad y_i^\tau \in \mathcal{R}_c(x^\tau_i), \quad x_{i+1}^\tau \in \mathcal{P}_{c/\tau}(-g, y_i^\tau).
\end{equation} 
\end{definition}

Note that Assumption {\ref{ass:ass1}} guarantees the existence of two sequences $(x_i^\tau)_i \subset X$ and $(y_i^\tau)_i \subset  X $ solving the scheme \eqref{eq:scheme}. We  define the  interpolating curves $\bar{x}^\tau: [0,+\infty) \to X$ and $\bar{y}^\tau:[0,+\infty) \to  X $ as 
\begin{equation} \bar{x}^\tau_t:= x^\tau_{\floor{t/\tau}}, \quad \bar{y}^\tau_t:= y^\tau_{\floor{t/\tau}}, \quad t \in [0,+\infty). \label{def:interp} 
\end{equation}

\begin{lemma}\label{le:intevi} Under Assumption {\ref{ass:ass1}}, let $\bar{x}^\tau$ and $\bar{y}^\tau$ be  defined as in \eqref{def:interp}. For  any $0 \le s= m\tau < n\tau = t$,  we have that
\begin{multline}
    \frac{c(x,\bar{y}_t^\tau)-c(x,\bar{y}_s^\tau)}{t-s} + \frac{1}{t-s} \sum_{i=m}^{n-1}\left [c(x_{i+1}^\tau, y_i^\tau)-c(x_{i+1}^\tau, y_{i+1}^\tau) \right ] + \frac{\lambda \tau}{t-s} \sum_{i=m}^{n-1}c(x,y_{i+1}^\tau)\\
    \quad \le g(x) - \frac{\tau}{t-s} \sum_{i=m}^{n-1} g(x_{i+1}^\tau)\quad \forall x \in X.\label{eq:intevi}
    \end{multline} 
\end{lemma}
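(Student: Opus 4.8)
The plan is to convert the cross-concavity hypothesis on $-g$ into a single-step estimate having the same ``discrete forward-difference'' shape as the target \eqref{eq:intevi}, and then to sum these estimates over the steps $i=m,\dots,n-1$ and telescope.

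First I would invoke the cross-concavity condition in Assumption~\ref{ass:ass1}: for each $i$, the $\lambda\tau$-strong $c/\tau$-cross-concavity of $-g$ is applicable with $x_1=x_{i+1}^\tau$ (legitimate since $x_{i+1}^\tau\in\mathcal{P}_{c/\tau}(-g,y_i^\tau)$ by the scheme \eqref{eq:scheme}), with $y_0=y_i^\tau$ and $y_1=y_{i+1}^\tau$ (legitimate since $y_{i+1}^\tau\in\mathcal{R}_c(x_{i+1}^\tau)=\mathcal{R}_{c/\tau}(x_{i+1}^\tau)$, scaling the cost by $1/\tau$ not affecting the minimizer), and with an arbitrary test point $x\in X$. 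Multiplying the resulting inequality by $\tau>0$ and collecting terms, one reaches
\begin{multline*}
\big[c(x,y_{i+1}^\tau)-c(x,y_i^\tau)\big]+\big[c(x_{i+1}^\tau,y_i^\tau)-c(x_{i+1}^\tau,y_{i+1}^\tau)\big]+\lambda\tau\,c(x,y_{i+1}^\tau)+\tau g(x_{i+1}^\tau)\\
\le \tau g(x)+\lambda\tau\,c(x_{i+1}^\tau,y_{i+1}^\tau).
\end{multline*}
Every term here already appears in \eqref{eq:intevi} except the last one on the right, and disposing of it is the one point requiring the structure of the problem: since $y_{i+1}^\tau\in\mathcal{R}_c(x_{i+1}^\tau)$ minimizes $c(x_{i+1}^\tau,\cdot)$, we have $c(x_{i+1}^\tau,y_{i+1}^\tau)=\min_{y}c(x_{i+1}^\tau,y)\le c(x_{i+1}^\tau,x_{i+1}^\tau)=0$ by \eqref{eq:addass}, so that, because $\lambda\ge0$, this term is nonpositive and may be discarded. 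I regard this interplay between the nonnegativity of $c$ (which vanishes on the diagonal) and $\lambda\ge0$ as the only genuinely substantive ingredient; the remainder is bookkeeping.

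Finally I would fix $0\le s=m\tau<n\tau=t$ and sum the one-step inequalities over $i=m,\dots,n-1$. The bracket $c(x,y_{i+1}^\tau)-c(x,y_i^\tau)$ telescopes to $c(x,y_n^\tau)-c(x,y_m^\tau)$, which by \eqref{def:interp} equals $c(x,\bar y_t^\tau)-c(x,\bar y_s^\tau)$; the three remaining partial sums stay as sums, while $\sum_{i=m}^{n-1}\tau g(x)=(t-s)g(x)$. Dividing through by $t-s=(n-m)\tau>0$ yields precisely \eqref{eq:intevi}. All iterates used (in particular $y_n^\tau$, formed once $x_n^\tau$ is available) exist by Assumption~\ref{ass:ass1}, and since $g$ is real-valued no $\infty-\infty$ ambiguity arises in the rearrangements; I do not expect any obstacle beyond carefully tracking the signs in the single-step rearrangement.
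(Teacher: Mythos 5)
Your proof follows the same route as the paper's: apply the $\lambda\tau$-strong $c/\tau$-cross-concavity of $-g$ with $y_0=y_i^\tau$, $x_1=x_{i+1}^\tau\in\mathcal{P}_{c/\tau}(-g,y_i^\tau)$, $y_1=y_{i+1}^\tau\in\mathcal{R}_c(x_{i+1}^\tau)$ and an arbitrary test point $x$, then sum over $i=m,\dots,n-1$, telescope the first bracket, and divide by $t-s$; all of that is correct and is exactly the paper's argument around \eqref{eq:tobesum}. The one point of divergence is your treatment of the term $\lambda\tau\,c(x_{i+1}^\tau,y_{i+1}^\tau)$: you correctly observe that the definition of cross-concavity produces it on the right-hand side, and you discard it by invoking \eqref{eq:addass} together with $\lambda\ge 0$. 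Strictly speaking, \eqref{eq:addass} is not among the hypotheses of this lemma, which is stated under Assumption \ref{ass:ass1} alone (the paper even stresses at the start of Section \ref{sec:f+g}/\ref{sec:f=0} that nonnegativity of $c$ is not assumed at this stage; it is only added from Lemma \ref{lem:nonincreasing} onward). So your disposal step uses a hypothesis the statement does not grant — under Assumption \ref{ass:ass1} alone one only gets \eqref{eq:intevi} with the $\lambda$-sum replaced by $\sum_{i=m}^{n-1}\bigl[c(x,y_{i+1}^\tau)-c(x_{i+1}^\tau,y_{i+1}^\tau)\bigr]$, which reduces to the stated form precisely when $c(x_{i+1}^\tau,y_{i+1}^\tau)=0$, e.g.\ under \eqref{eq:addass}. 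That said, the paper's own one-step inequality \eqref{eq:tobesum} silently drops the same term, and every subsequent use of the lemma does assume \eqref{eq:addass}, so your bookkeeping is in fact more explicit than the paper's and causes no harm in context; just be aware that, as the lemma is literally stated, the discarding step needs either \eqref{eq:addass} (or at least $\lambda\,c(x_{i+1}^\tau,y_{i+1}^\tau)\le 0$) as an additional hypothesis.
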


 Relation  \eqref{eq:intevi} is a discrete version of  the EVI in integral form  \eqref{eq:evi_int}, our goal is to show that we can indeed take the limit in $\tau$.

\begin{proof} Since $-g$ is $\lambda \tau$-strongly $c/\tau$-cross-concave, by the definition of the scheme, we have
\begin{align}
\frac{c(x,y_{i+1}^\tau)- c(x,y_i^\tau)}{\tau} + \frac{c(x_{i+1}^\tau, y_i^\tau)-c(x_{i+1}^\tau, y_{i+1}^\tau)}{\tau} + \lambda c(x,y_{i+1}^\tau) \nonumber\\
\quad \le g(x)-g(x_{i+1}^\tau) \quad \forall \, x \in X, \, i \in  \N .\label{eq:tobesum}
\end{align}
We sum \eqref{eq:tobesum} from $i=m$ to $i=n-1$ to get
\begin{multline}
    \frac{c(x,\bar{y}_t^\tau)-c(x,\bar{y}_s^\tau)}{\tau} + \frac{1}{\tau} \sum_{i=m}^{n-1}c(x_{i+1}^\tau, y_i^\tau) - \frac{1}{\tau} \sum_{i=m}^{n-1} c(x^\tau_{i+1},y^\tau_{i+1}) +\lambda \sum_{i=m}^{n-1}c(x,y_{i+1}^\tau) \\
    \le (n-m)g(x) - \sum_{i=m}^{n-1} g(x_{i+1}^\tau) \label{eq:poi}
\end{multline}
and dividing by $(n-m)$ we obtain \eqref{eq:intevi}.
\end{proof}

 If $c$ satisfies \eqref{eq:addass}, then  \eqref{eq:scheme} defines a descent scheme over $g$, as well as  a first step toward a Cauchy estimate involving the symmetrization of $c$.
\begin{lemma}[$g$ non-increasing and first Cauchy-like estimate]\label{lem:non-increasing}  Let $c$ be a cost satisfying \eqref{eq:addass}  and let  Assumption \ref{ass:ass1} hold. Then,   the map $t \mapsto g(\bar{x}^\tau_t)$ is non-increasing. Moreover, for any $0 \le n\tau=t$ and $p \in \N$, we have
    \begin{align}
&[c(\bar{x}_{t}^{\tau/2^p},\bar{y}_{t}^\tau)+c(\bar{x}_{t}^\tau,\bar{y}_{t}^{\tau/2^p})]\nonumber\\
 &  \qquad +\sum_{i=0}^{n-1}[c(\bar{x}_{i\tau}^{\tau/2^p},\bar{y}_{(i+1)\tau}^\tau)- c(\bar{x}_{(i+1)\tau}^{\tau/2^p},\bar{y}_{i\tau}^\tau) + c(\bar{x}_{i\tau}^\tau,\bar{y}_{(i+1)\tau}^{\tau/2^p})-c(\bar{x}_{(i+1)\tau}^\tau,\bar{y}_{i\tau}^{\tau/2^p})]\nonumber\\ 
 & \qquad  +2\sum_{i=0}^{n-1}c(\bar{x}_{(i+1)\tau}^\tau, \bar{y}_{i\tau}^\tau)+ 2\sum_{j=0}^{n2^p-1}c(\bar{x}_{(j+1)\tau/2^p}^{\tau/2^p}, \bar{y}_{j\tau/2^p}^{\tau/2^p})\nonumber\\
    & \quad \le \tau(2g(x_0)-g(\bar{x}_{t}^\tau)-g(\bar{x}_{t}^{\tau/2^p})).\label{eq:Cauchy_ineq_full}
\end{align}
\end{lemma}

\begin{proof}
Assumption \eqref{eq:addass} and the fact that $ y^\tau_i \in \mathcal{R}_c(x_i^\tau)$ imply that $c(x_i^\tau,y_i^\tau)=0$.
 As $x^\tau_{i+1} \in \mathcal{P}_{c/\tau}(-g,y_i^\tau)$ and $c\geq 0$ we have that 
 $$g (x_{i+1}^\tau) \stackrel{c\geq0}{\leq} \frac{c(x_{i+1}^\tau,y^\tau_i) }{\tau}+ g (x_{i+1}^\tau) \stackrel{x^\tau_{i+1} \in \mathcal{P}_{c/\tau}(-g,y_i^\tau)}{\leq} \frac{c(x_{i}^\tau,y^\tau_i)}{\tau} + g (x_{i}^\tau) \stackrel{c(x_i^\tau,y_i^\tau)=0}{=} g(x_i^\tau). $$
 This proves that $(g(x_i^\tau))_i$ is non-increasing. 
 
 Since $c \ge 0$  from \eqref{eq:addass}, by using \eqref{eq:poi} with $t=(i+1)\tau$, $s=i\tau$, $n=i+1$, and $m=i$, and recalling that $c(x^\tau_{i+1},y^\tau_{i+1})=0$ as $y^\tau_{i+1}\in {\mathcal R}_c(x^\tau_{i+1})$,  for all $i\in \N$  we get 
\begin{align}\label{eq:tobesum_simple2}
&\frac{c(x,\bar{y}_{(i+1)\tau}^\tau)- c(x,\bar{y}_{i\tau}^\tau)}{\tau} + \frac{c(x_{i+1}^\tau, y_i^\tau)}{\tau}  \le g(x)-g(\bar{x}_{(i+1)\tau}^\tau) \quad \forall \, x \in X.
\end{align}
 Write now \eqref{eq:intevi} for the time   step $\tau/2^p$,  and for the choices  $n=(i+1)2^p$ and $m=i2^p$.  Still   using that $c \ge 0$  and the fact that $c(x^\tau_{i+1},y^\tau_{i+1})=0$  we get  
\begin{align}
    &\frac{c( x ,\bar{y}_{(i+1)\tau}^{\tau/2^p})-c( x ,\bar{y}_{i\tau}^{\tau/2^p})}{\tau}+\frac{1}{\tau} \sum_{j=i2^p}^{(i+1)2^p-1}c(\bar{x}_{(j+1)\tau/2^p}^{\tau/2^p}, \bar{y}_{j\tau/2^p}^{\tau/2^p}) \nonumber\\
    &\quad \le g( x ) -g(\bar{x}_{(i+1)\tau}^{\tau/2^p}) \quad \forall \, x   \in X, \, i \in \N.\label{eq:poi_simple2}
\end{align}
 By summing   \eqref{eq:tobesum_simple2} with $x=\bar{x}_{i\tau}^{\tau/2^p}$ and \eqref{eq:poi_simple2} with $x =\bar{x}_{(i+1)\tau}^\tau$,  and then  \eqref{eq:tobesum_simple2} with $x=\bar{x}_{(i+1)\tau}^{\tau/2^p}$ and \eqref{eq:poi_simple2} with $ x= \bar{x}_{i\tau}^\tau$  we   obtain
\begin{align*}
    &c(\bar{x}_{i\tau}^{\tau/2^p},\bar{y}_{(i+1)\tau}^\tau)- c(\bar{x}_{i\tau}^{\tau/2^p},\bar{y}_{i\tau}^\tau) + c(\bar{x}_{(i+1)\tau}^\tau,\bar{y}_{(i+1)\tau}^{\tau/2^p})-c(\bar{x}_{(i+1)\tau}^\tau,\bar{y}_{i\tau}^{\tau/2^p})\\
    &\quad +c(\bar{x}_{(i+1)\tau}^\tau, \bar{y}_i^\tau)+\sum_{j=i2^p}^{(i+1)2^p-1}c(\bar{x}_{(j+1)\tau/2^p}^{\tau/2^p}, \bar{y}_{j\tau/2^p}^{\tau/2^p})\le \tau(g(\bar{x}_{i\tau}^{\tau/2^p})-g(\bar{x}_{(i+1)\tau}^{\tau/2^p})), \\[1mm]
    &c(\bar{x}_{(i+1)\tau}^{\tau/2^p},\bar{y}_{(i+1)\tau}^\tau)- c(\bar{x}_{(i+1)\tau}^{\tau/2^p},\bar{y}_{i\tau}^\tau) + c(\bar{x}_{i\tau}^\tau,\bar{y}_{(i+1)\tau}^{\tau/2^p})-c(\bar{x}_{i\tau}^\tau,\bar{y}_{i\tau}^{\tau/2^p})\\
    &\quad +c(\bar{x}_{(i+1)\tau}^\tau, \bar{y}_i^\tau)+\sum_{j=i2^p}^{(i+1)2^p-1}c(\bar{x}_{(j+1)\tau/2^p}^{\tau/2^p}, \bar{y}_{j\tau/2^p}^{\tau/2^p}) \le \tau(g(\bar{x}_{i\tau}^\tau)-g(\bar{x}_{(i+1)\tau}^\tau)).
\end{align*}
 Taking the sum of   the above inequalities and  summing  from $i=0$ to $i=n-1$  we obtain \eqref{eq:Cauchy_ineq_full}.
\end{proof}

\begin{proposition}[Cauchy estimate]\label{prop:cauchy}  Let $c$ be a cost satisfying \eqref{eq:addass}  and let  Assumption \ref{ass:ass1} hold. Assume additionally  that $c$     decomposes as $c=c_1+c_2$ with 
\begin{enumerate}
    \item  $c_1\geq 0$  symmetric: $c_1(x_1,x_2)=c_1(x_2,x_1)$ for all $x_1,\, x_2 \in X$;  
    \item $c_2$ fulfilling the triangle inequality: $$c_2(x_1,x_2)\leq c_2(x_1,x_3)+ c_2(x_3,x_2)\quad \forall x_1,\, x_2, \, x_3 \in X.$$  
\end{enumerate}
Then, for every $0 \le n \tau = t$ and every $p \in \N$, we have
\[ c(\bar{x}_{t}^{\tau/2^p},\bar{x}_{t}^\tau)+c(\bar{x}_{t}^\tau,\bar{x}_{t}^{\tau/2^p}) \le \tau(2g(x_0)-g(\bar{x}_{t}^\tau)-g(\bar{x}_{t}^{\tau/2^p})).\]
    
\end{proposition}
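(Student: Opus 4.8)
The plan is to derive the estimate from inequality~\eqref{eq:Cauchy_ineq_full} of Lemma~\ref{lem:nonincreasing}. First I would normalize the scheme: since $c$ satisfies~\eqref{eq:addass}, one has $\mathcal{R}_c(x_0)=\{x_0\}$ for every $x_0\in X$ (indeed $c(x_0,y)\ge 0=c(x_0,x_0)$, with equality only at $y=x_0$), so along~\eqref{eq:scheme} we get $y_i^\tau=x_i^\tau$ for all $i$ and hence $\bar{y}^\tau_t=\bar{x}^\tau_t$, $\bar{y}^{\tau/2^p}_t=\bar{x}^{\tau/2^p}_t$ for all $t$. Fixing $t=n\tau$ and setting $a_i:=\bar{x}^{\tau/2^p}_{i\tau}=x^{\tau/2^p}_{i2^p}$, $b_i:=\bar{x}^\tau_{i\tau}=x^\tau_i$, $\tilde{x}_j:=x^{\tau/2^p}_j$ (so $a_i=\tilde{x}_{i2^p}$, $a_0=b_0=x_0$), relation~\eqref{eq:Cauchy_ineq_full} becomes
\[
\big[c(a_n,b_n)+c(b_n,a_n)\big]+\Sigma+2\sum_{i=0}^{n-1}c(b_{i+1},b_i)+2\sum_{j=0}^{n2^p-1}c(\tilde{x}_{j+1},\tilde{x}_j)\le \tau\big(2g(x_0)-g(b_n)-g(a_n)\big),
\]
with $\Sigma:=\sum_{i=0}^{n-1}\big[c(a_i,b_{i+1})-c(a_{i+1},b_i)+c(b_i,a_{i+1})-c(b_{i+1},a_i)\big]$. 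It therefore suffices to show that $R:=\Sigma+2\sum_{i}c(b_{i+1},b_i)+2\sum_{j}c(\tilde{x}_{j+1},\tilde{x}_j)\ge 0$.

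Next I would invoke the decomposition $c=c_1+c_2$ together with two elementary facts. Because $c_1$ is symmetric, each summand of the $c_1$-part of $\Sigma$ cancels ($c_1(b_i,a_{i+1})=c_1(a_{i+1},b_i)$, $c_1(b_{i+1},a_i)=c_1(a_i,b_{i+1})$), so $\Sigma$ equals its $c_2$-part. Moreover the triangle inequality for $c_2$ with all three points equal gives $c_2(z,z)\le 2c_2(z,z)$, i.e.\ $c_2(z,z)\ge 0$, which combined with $c(z,z)=0$ (from~\eqref{eq:addass}) and $c_1\ge 0$ forces $c_1(z,z)=c_2(z,z)=0$ for every $z$, in particular at $z=x_0$.

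The crucial step is a telescoping lower bound for $\Sigma$. For each $i$ I would apply the triangle inequality for $c_2$ four times,
\[
c_2(a_i,b_{i+1})\ge c_2(a_{i+1},b_{i+1})-c_2(a_{i+1},a_i),\qquad -c_2(a_{i+1},b_i)\ge -c_2(a_i,b_i)-c_2(a_{i+1},a_i),
\]
\[
c_2(b_i,a_{i+1})\ge c_2(b_{i+1},a_{i+1})-c_2(b_{i+1},b_i),\qquad -c_2(b_{i+1},a_i)\ge -c_2(b_i,a_i)-c_2(b_{i+1},b_i),
\]
and sum them; summing the resulting per-index bound over $i$ and telescoping (using $a_0=b_0=x_0$ and $c_2(x_0,x_0)=0$) yields
\[
\Sigma\ \ge\ \big[c_2(a_n,b_n)+c_2(b_n,a_n)\big]-2\sum_{i=0}^{n-1}c_2(a_{i+1},a_i)-2\sum_{i=0}^{n-1}c_2(b_{i+1},b_i).
\]
It remains to absorb the two negative sums. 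Iterating the triangle inequality for $c_2$ along the fine trajectory gives $c_2(a_{i+1},a_i)=c_2(\tilde{x}_{(i+1)2^p},\tilde{x}_{i2^p})\le\sum_{j=i2^p}^{(i+1)2^p-1}c_2(\tilde{x}_{j+1},\tilde{x}_j)$, hence $\sum_i c_2(a_{i+1},a_i)\le\sum_j c_2(\tilde{x}_{j+1},\tilde{x}_j)$. Plugging the lower bound for $\Sigma$ into $R$ and splitting the doubled sums as $c_1+c_2$, the term $-2\sum_i c_2(b_{i+1},b_i)$ is cancelled by the $c_2$-part of $2\sum_i c(b_{i+1},b_i)$, the term $-2\sum_i c_2(a_{i+1},a_i)$ is dominated by the $c_2$-part of $2\sum_j c(\tilde{x}_{j+1},\tilde{x}_j)$, and the leftover $c_1$-parts are nonnegative; so $R\ge c_2(a_n,b_n)+c_2(b_n,a_n)\ge c_2(a_n,a_n)=0$ by a last application of the triangle inequality for $c_2$. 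Combined with the displayed form of~\eqref{eq:Cauchy_ineq_full}, this gives $c(a_n,b_n)+c(b_n,a_n)\le\tau(2g(x_0)-g(b_n)-g(a_n))$, which is the claim.

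I expect the main obstacle to be precisely the choice of telescoping in the crucial step. The naive pairing of the four cross terms makes the diagonal contributions cancel $c_2(a_n,b_n)+c_2(b_n,a_n)$ rather than produce it, which degrades the conclusion to the useless inequality $0\le\tau(2g(x_0)-g(b_n)-g(a_n))$ (mere monotonicity of $g$). One must instead first split off the symmetric part $c_1$ so it drops out of $\Sigma$, and then, for the triangle-inequality part $c_2$, pair each level-$i$ cross term with the diagonal term at the adjacent level that makes the diagonal contributions telescope with the correct sign, paying only by step-increments of $c_2$ that the doubled sums in~\eqref{eq:Cauchy_ineq_full} are tailored to cover.
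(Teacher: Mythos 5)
Your proof is correct and follows essentially the paper's route: after identifying $y_i^\tau=x_i^\tau$ via \eqref{eq:addass}, reduce to showing that the remainder $R$ (the paper's $A_c$) in \eqref{eq:Cauchy_ineq_full} is nonnegative, split $c=c_1+c_2$, let symmetry cancel the $c_1$ cross terms, and control the $c_2$ part by the triangle inequality together with the domination of the coarse increments $\sum_i c_2(a_{i+1},a_i)$ by the fine ones. The only difference is bookkeeping: the paper absorbs the two negative cross terms index by index with two three-point triangle inequalities, whereas you telescope the diagonal terms across indices, picking up the harmless endpoint contribution $c_2(a_n,b_n)+c_2(b_n,a_n)\ge 0$ and the fact that $c_2(x_0,x_0)=0$ (both of which you justify), so your variant is equally valid.
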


\begin{proof}  From the nondegeneracy of $c$  coming from \eqref{eq:addass}  we get that $x_i^\tau=y_i^\tau$ for all $\tau \in (0,\bar{\tau})$ and all $i \in \N$. 
The assertion of the proposition follows from \eqref{eq:Cauchy_ineq_full}, as soon as we check that the quantity $A_c$  defined by 
\begin{multline}
    A_c:= \sum_{i=0}^{n-1}[c(\bar{x}_{i\tau}^{\tau/2^p},\bar{y}_{(i+1)\tau}^\tau)- c(\bar{x}_{(i+1)\tau}^{\tau/2^p},\bar{y}_{i\tau}^\tau) + c(\bar{x}_{i\tau}^\tau,\bar{y}_{(i+1)\tau}^{\tau/2^p})-c(\bar{x}_{(i+1)\tau}^\tau,\bar{y}_{i\tau}^{\tau/2^p})]\\
   \quad +2\sum_{i=0}^{n-1}c(\bar{x}_{(i+1)\tau}^\tau, \bar{y}_{i\tau}^\tau)+ 2\sum_{j=0}^{n2^p-1}c(\bar{x}_{(j+1)\tau/2^p}^{\tau/2^p}, \bar{y}_{j\tau/2^p}^{\tau/2^p})\label{eq:tph0}
\end{multline}
 is non-negative. 
Note that $A_c=A_{c_1}+A_{c_2}$. From the symmetry of $c_1\geq 0$ we obtain 
\begin{equation}
      A_{c_1}=2\sum_{i=0}^{n-1}c_1(\bar{x}_{(i+1)\tau}^\tau, \bar{y}_{i\tau}^\tau)+ 2\sum_{j=0}^{n2^p-1}c_1(\bar{x}_{(j+1)\tau/2^p}^{\tau/2^p}, \bar{y}_{j\tau/2^p}^{\tau/2^p}) \ge 0.\label{eq:tph1}
\end{equation}

Using the triangle inequality and the fact that $x_i^\tau = y_i^\tau $ and $x_i^{\tau/2^p} = y_i^{\tau/2^p} $, for  every $i=0, \dots, n-1$ one has that 
\begin{align*}
   c_2(\bar{x}_{(i+1)\tau}^\tau, \bar y_{i\tau}^\tau)+ c_2(\bar x_{i\tau}^\tau,\bar y_{(i+1)\tau}^{\tau/2^p})+c_2(\bar x_{(i+1)\tau}^{\tau/2^p}, \bar y_{i\tau}^{\tau/2^p}) &\ge c_2(\bar x_{(i+1)\tau}^\tau,\bar y_{i\tau}^{\tau/2^p}),\\
   c_2(\bar x_{(i+1)\tau}^{\tau/2^p}, \bar y_{i\tau}^{\tau/2^p})+c_2(\bar x_{i\tau}^{\tau/2^p},\bar y_{(i+1)\tau}^\tau) + c_2(\bar x_{(i+1)\tau}^\tau, \bar y_{i\tau}^\tau) &\ge c_2(\bar x_{(i+1)\tau}^{\tau/2^p},\bar y_{i\tau}^\tau).
\end{align*}
At the same time, again the triangle inequality gives 
\begin{equation*}
    2  \sum_{j=0}^{n2^p-1}c_2(\bar x_{(j+1)\tau/2^p}^{\tau/2^p}, \bar y_{j\tau/2^p}^{\tau/2^p}) \ge   2  \sum_{i=0}^{n-1}c_2(\bar x_{(i+1)\tau}^{\tau/2^p}, \bar y_{i\tau}^{\tau/2^p}).
\end{equation*}
By combining these inequalities one gets that 
\begin{multline}
   A_{c_2}\geq 
   \sum_{i=0}^{n-1}[c_2(\bar{x}_{i\tau}^{\tau/2^p},\bar{y}_{(i+1)\tau}^\tau)- c_2(\bar{x}_{(i+1)\tau}^{\tau/2^p},\bar{y}_{i\tau}^\tau) + c_2(\bar{x}_{i\tau}^\tau,\bar{y}_{(i+1)\tau}^{\tau/2^p})-c_2(\bar{x}_{(i+1)\tau}^\tau,\bar{y}_{i\tau}^{\tau/2^p})]\\
   +2\sum_{i=0}^{n-1}c_2(\bar{x}_{(i+1)\tau}^\tau, \bar{y}_{i\tau}^\tau)+ 2\sum_{i=0}^{n-1}c_2(\bar x_{(i+1)\tau}^{\tau/2^p}, \bar y_{i\tau}^{\tau/2^p}) \geq 0. 
   \label{eq:tph2}
\end{multline}
Inequalities \eqref{eq:tph1}--\eqref{eq:tph2} imply  that $A_c = A_{c_1} + A_{c_2} \geq 0$,  whence the thesis. 
\end{proof}

\begin{theorem}[Existence of EVI solutions  via   implicit scheme]\label{thm:existence_limit_scheme}  Let $c$ be a cost satisfying \eqref{eq:addass}  and let   Assumption \ref{ass:ass1} hold. Suppose   in  addition that 
\begin{enumerate}
    \item $g$ is lower bounded;
    \item there exist a topology $\sigma$ on $X$ such that $\sigma$ is compatible with $(c,g)$ and $c$ is additionally $\sigma$-continuous;
    \item Either
    \begin{enumerate}
        \item $\sigma$ is Cauchy-compatible with $(c,g)$ and $c$ decomposes as in Proposition \ref{prop:cauchy}, or
        \item   the sublevel sets $g$ are sequentially compact for $\sigma$.
    \end{enumerate}
\end{enumerate}
Then, for every $x_0 \in X$ and $\tau \in (0, \bar{\tau})$ the family of curves $(\bar{x}^{\tau/2^n})$ (constructed according to \eqref{eq:scheme} starting from $x_0$) converges (up to a subsequence in case (b)) pointwise (w.r.t.~$\sigma$) as $\tau \downarrow 0$ to a $\sigma$-continuous curve $x:[0,+\infty) \to X$ which may depend on $\tau$ and satisfies
\begin{equation}
c(x, x_t)-c(x,x_s) + \lambda\int_{s}^t c(x,x_r) \de r 
\le (t-s)g(x)-\int_s^t g(x_r) \de r \quad \forall \, 0 \le s \le t, \  x \in X.    \label{eq:evi}
\end{equation}
If $\EVI_\lambda(X,c,g)$  contains  a unique trajectory $x(\cdot)$ starting from $x_0$, then, $\bar{x}^{\tau}(\cdot)$ $\sigma$-converges pointwise to this $x(\cdot)$, which does not depend on $\tau$. In case (a), we  also have the following error estimate
\begin{equation}\label{eq:error_estimate}
    c \left (\bar{x}^{\tau}_t, x_t \right ) \le 2\tau (g(x_0)-\inf g) \quad \forall t \ge 0, \, \tau \in (0, \bar{\tau}).
\end{equation}
\end{theorem}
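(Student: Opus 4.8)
The plan is to pass to the limit $\tau\downarrow 0$ in the discrete integral inequality \eqref{eq:poi} of \Cref{le:intevi}, after building enough a priori control on the interpolants and on their limit. Throughout, recall that $x_i^\tau=y_i^\tau$ by the nondegeneracy in \eqref{eq:addass}, so $\bar y^\tau=\bar x^\tau$.

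\textit{Step 1: a priori estimates.} By \Cref{lem:nonincreasing} the map $i\mapsto g(x_i^\tau)$ is nonincreasing, hence, as $g$ is lower bounded, $\inf g\le g(\bar x^\tau_t)\le g(x_0)$ for all $t\ge 0$, $\tau\in(0,\bar\tau)$; writing $G:=g(x_0)-\inf g$, every interpolant stays in the sublevel $\{g\le g(x_0)\}$. Testing \eqref{eq:poi} with $x=x_m^\tau=\bar x^\tau_{m\tau}$, discarding the nonnegative terms $\tfrac1\tau\sum c(x_{i+1}^\tau,x_i^\tau)$ and $\lambda\sum c(x_m^\tau,x_{i+1}^\tau)$, and using monotonicity of $g$ gives the discrete Lipschitz-type bound $c(\bar x^\tau_s,\bar x^\tau_t)\le(t-s+\tau)(g(\bar x^\tau_s)-g(\bar x^\tau_t))\le(t-s+\tau)G$ for $0\le s\le t$, the discrete analogue of \eqref{eq:Lip_c}. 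Testing \eqref{eq:poi} instead with $x$ fixed and $m=0$ yields the pointwise bound $c(x,\bar x^\tau_t)\le c(x,x_0)+(t+\tau)(g(x)-\inf g)$, which will provide the dominating function in Step 3. Finally, testing \eqref{eq:tobesum_simple2} with $x=x_i^\tau$ gives the two-sided one-step estimate $c(x_i^\tau,x_{i+1}^\tau)+c(x_{i+1}^\tau,x_i^\tau)\le\tau(g(x_i^\tau)-g(x_{i+1}^\tau))$; in case (a), combining it with the triangle inequality for $c_2$ and symmetry of $c_1$ yields a two-sided version of the modulus estimate, controlling also $c(\bar x^\tau_t,\bar x^\tau_s)$ for $s\le t$ by a quantity of order $(t-s+\tau)G$.

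\textit{Step 2: construction of the limit curve.} Set $\tau_n:=\tau/2^n$ and let $D$ be the countable dense set of dyadic grid points $k\tau/2^m$. For $t\in D$, \Cref{prop:cauchy} (applied with step $\tau_m$ and halving parameter $n-m$) shows $(\bar x^{\tau_n}_t)_{n\ge m}$ is $c$-forward-Cauchy inside $\{g\le g(x_0)\}$ in case (a), so it $\sigma$-converges to some $x_t$ by \Cref{def:comp}-(3); in case (b) one uses $\sigma$-sequential compactness of $\{g\le g(x_0)\}$ and a diagonal argument over $D$ to extract a subsequence along which $\bar x^{\tau_n}_t$ converges for every $t\in D$. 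Passing to the limit in the modulus estimate (via $\sigma$-continuity of $c$) gives $c(x_s,x_t)\le|t-s|G$ for $s,t\in D$; combined with \Cref{def:comp}-(1) and (in case (a)) \Cref{def:comp}-(3), this extends $t\mapsto x_t$ to a $\sigma$-continuous curve on $[0,+\infty)$ and, using the two-sided modulus control near grid points, upgrades the convergence to $\bar x^{\tau_n}_t\xrightarrow{\sigma}x_t$ for \emph{every} $t\ge 0$ (along the subsequence in case (b)). I expect this step to be the main obstacle: reconciling $c$-Cauchyness/compactness at grid points with continuity in time at arbitrary times, for a cost that is neither symmetric nor a metric, is what forces the decomposition hypothesis $c=c_1+c_2$ in case (a) and the compactness hypothesis in case (b).

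\textit{Step 3: limit passage and conclusion.} Fix $x\in X$ and $0\le s\le t$, put $m_n:=\lfloor s/\tau_n\rfloor$, $N_n:=\lfloor t/\tau_n\rfloor$ in \eqref{eq:poi}, multiply by $\tau_n$, and discard the nonnegative term $\sum c(x_{i+1}^{\tau_n},x_i^{\tau_n})$. The surviving sums are Riemann sums, $\tau_n\sum_{i=m_n}^{N_n-1}c(x,x_{i+1}^{\tau_n})=\int_{(m_n+1)\tau_n}^{(N_n+1)\tau_n}c(x,\bar x^{\tau_n}_r)\,dr$ and similarly for $g$, with the endpoints $(m_n+1)\tau_n\to s$, $(N_n+1)\tau_n\to t$. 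Using $\bar x^{\tau_n}_r\xrightarrow{\sigma}x_r$, the $\sigma$-continuity of $c$, the dominating bound from Step 1, and dominated convergence, the first integral converges to $\int_s^t c(x,x_r)\,dr$; for the energy term, l.s.c.\ of $g$ and Fatou's lemma give $\limsup_n\big(-\int_{(m_n+1)\tau_n}^{(N_n+1)\tau_n}g(\bar x^{\tau_n}_r)\,dr\big)\le-\int_s^t g(x_r)\,dr$, which is the direction needed. Taking $\limsup_n$ in \eqref{eq:poi} thus gives \eqref{eq:evi} for all $s,t\in D$, hence for all $0\le s\le t$ by $\sigma$-continuity of $x$ and $c$ and the integrability just obtained. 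Since $c$ is $\sigma$-continuous (so in particular separately $\sigma$-continuous), \Cref{thm:equiv} together with \Cref{rem:t0} shows $x(\cdot)\in\EVI_\lambda(X,c,g)$. For the error estimate in case (a), \Cref{prop:cauchy} with step $\tau$ and halving gives $c(\bar x^{\tau/2^p}_t,\bar x^\tau_t)\le\tau\big(2g(x_0)-g(\bar x^\tau_t)-g(\bar x^{\tau/2^p}_t)\big)\le 2\tau G$ on grid points; letting $p\to\infty$ and using $\sigma$-continuity of $c$ yields \eqref{eq:error_estimate} on grid points, and then for all $t$ by the modulus estimate. In particular $c(\bar x^\tau_t,x_t)\to0$ with both points in $\{g\le g(x_0)\}$, so \Cref{def:comp}-(1) gives convergence of the full family $\bar x^\tau(\cdot)$ in case (a). Finally, if $\EVI_\lambda(X,c,g)$ contains only one curve from $x_0$, every $\sigma$-convergent subsequence constructed above has its limit in $\EVI_\lambda(X,c,g)$ and issuing from $x_0$, hence equal to that unique curve; so the limit is independent of $\tau$ and, in case (b), the whole family converges.
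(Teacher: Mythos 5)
Your Steps 1 and 3, the error estimate, and the uniqueness/full-family-convergence conclusion follow the paper's proof essentially verbatim: the paper also starts from \eqref{eq:poi} at step $\tau/2^n$, tests with $x=\bar x^{\tau/2^n}_s$ to get the modulus bound \eqref{eq:cocont}, passes to the limit by Fatou/lower semicontinuity (note that since $\lambda\ge 0$ and $c\ge 0$ here, Fatou alone suffices and your dominating function is not even needed), derives \eqref{eq:error_estimate} from \Cref{prop:cauchy} with $n=0$, $m\to\infty$, and concludes as you do when $\EVI_\lambda(X,c,g)$ has a unique trajectory from $x_0$.

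The genuine gap is in your Step 2, at the point where you pass from dyadic times to arbitrary times via a ``two-sided modulus estimate'' controlling $c(\bar x^\tau_t,\bar x^\tau_s)$ for $s\le t$. First, you invoke it ``along the subsequence in case (b)'', but in case (b) there is no decomposition hypothesis at all: the scheme only yields the one-sided bound $c(\bar x^\tau_s,\bar x^\tau_t)\le (t-s+\tau)(g(x_0)-\inf g)$, and the reversed order is simply not controlled. Second, even in case (a) your derivation does not go through: chaining the one-step two-sided estimate requires the triangle inequality, which only $c_2$ enjoys; $c_1$ is merely symmetric and nonnegative, and you cannot bound $c_1$ by $c$ along the reversed chain unless $c_2\ge 0$, which is not assumed. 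The paper circumvents this entirely: in case (a) it runs the Cauchy estimate \eqref{eq:mminn} of \Cref{prop:cauchy} at every time $t$ (no dyadic-to-general upgrade is needed), and in case (b) it introduces the accumulated-cost functions $\varphi_n$, applies Helly's selection theorem, and identifies the limit at non-exceptional times through the one-sided estimate \eqref{eq:helly}, the $\sigma$-continuity of $c$, and compatibility. Your gap is repairable without the two-sided claim: since \Cref{def:comp}-(1) accepts $c$-convergence in either argument, the one-sided limit bound $c(x_s,u_t)\le (t-s)(g(x_0)-\inf g)$ for dyadic $s\uparrow t$ already forces any two subsequential limits $u_t$ of $\bar x^{\tau_{n_k}}_t$ to coincide (Hausdorffness), and with sequential compactness of the sublevel this yields convergence of the whole diagonal subsequence at every $t$, as well as left and right $\sigma$-continuity of the limit curve; but as written, the mechanism you rely on does not exist under the stated hypotheses.
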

\begin{proof} We apply Lemma \ref{le:intevi}, written for  the time step  $\tau/2^n$. Discarding the second non-negative term in the  l.h.s.\ of \eqref{eq:intevi},  testing against any $x \in X$, we obtain, whenever $0 \le s \le t$ and $j,k \in \N$ are such that $j\tau \le s 2^n < (j+1)\tau$ and $k\tau \le t 2^n < (k+1)\tau$, that
\begin{align}\nonumber
&c(x,\bar{x}^{\tau/{2^n}}_t)-c(x,\bar{x}^{\tau/{2^n}}_s)  + \lambda \tau \sum_{i=j}^{k-1}c(x,x_{i+1}^{\tau/2^n}) + \tau \sum_{i=j}^{k-1} g(x_{i+1}^{\tau/2^n})\\ \nonumber 
    &\quad =c(x,\bar{x}^{\tau/{2^n}}_t)-c(x,\bar{x}^{\tau/{2^n}}_s)  + \int_{j\tau 2^{-n}}^{(k-1)\tau 2^{-n}} \left (\lambda c(x,\bar{x}^{\tau/2^n}_r) + g(\bar{x}^{\tau/2^n}_r) \right ) \de r \\[2mm] \label {eq:discevie3}
    &\quad \le (t-s) g(x).  
\end{align}
Notice that, choosing $x=\bar{x}^{\tau/2^n}_s$ and dropping the term $\lambda c\ge 0$, we get
\begin{equation}\label{eq:cocont}
c(\bar{x}^{\tau/2^n}_t, \bar{x}^{\tau/2^n}_s) \le (t-s)(g(\bar{x}^{\tau/2^n}_s)-A) \le (t-s)(g(x_0)-A) \quad \forall \, 0 \le s \le t, 
\end{equation}
where $A$ is a lower bound for $g$.
Now we split the proof in the two cases {\it (a)} and {\it (b)} as above.

In case {\it (a)}, by Proposition \ref{prop:cauchy}, we get that
    \begin{equation}\label{eq:mminn}
    c \left (\bar{x}^{\tau/{2^n}}_t, \bar{x}^{\tau/{2^m}}_t \right ) \le \frac{\tau (g(x_0)-A)}{2^{ m \wedge n-1}} \quad \forall t \ge 0, \, \tau \in (0, \bar{\tau}), \, m,n \in \N.
    \end{equation}
    This shows that the sequence $(\bar{x}^{\tau/{2^n}}_t)_{n \in \N}$ is $c$-Cauchy for every 
    $t\ge 0$ so that, by Cauchy-compatibility, it converges (w.r.t.~$\sigma$) as $n \to + \infty$ to a point $x_t \in X$. 

In case {\it (b)}, let us set
\[ \varphi_n(t):=\sup \left \{ \sum_{i=1}^N c(\bar{x}_{t_{i-1}}^{\tau/2^n}, \bar{x}_{t_{i}}^{\tau/2^n}) : 0=t_0 < t_1 < \cdots < t_{N-1} < t_N=t, \, N \in \N \right \}, \quad t \ge 0. \]
Clearly, $\varphi_n$ is a non-decreasing function such that $\varphi_n(t) \le t (g(x_0)-A)$ by \eqref{eq:cocont} for every $t \ge 0$, and
\begin{equation}\label{eq:helly}
c(\bar{x}_{t}^{\tau/2^n}, \bar{x}_{s}^{\tau/2^n}) \le \varphi_n(t) - \varphi_n(s)  \quad \text{ for every } 0 \le s \le t.   
\end{equation}
By Helly's theorem, up to passing to a unrelabeled subsequence, we can assume that there exists a non-decreasing function $\varphi:[0,+\infty) \to \R$ such that $\varphi_n(t) \to \varphi(t)$ for every $t \ge 0$.  Take  a countable dense subset $A$ of $[0,+\infty)$ such that $\varphi$ is continuous on $A^c$. By the diagonal principle and  the  sequential compactness of $\{x\, |\, g(x)\le g(x_0) \}$,  we can find a subsequence $n_k \uparrow + \infty$ and points $(x_t)_{t \in A} \subset X$ such that $\bar{x}_t^{\tau/2^{n_k}}$ $\sigma$-converges to $x_t$ as $k \to +\infty$. We now show that, for every  $t \in A^c$, we have that there exists a point $x_t \in X$ such that $\bar{x}_t^{\tau/2^{n_k}}$ $\sigma$-converges to $x_t$. Let $t \in A^c$ and let $n_{k_h} \uparrow + \infty$ and $u_t \in X$ be such that $\bar{x}_t^{\tau/2^{n_{k_h}}}$ $\sigma$-converges to $u_t$ as $h \to + \infty$. Let $(t_j)_j \subset A$ be such that $t_j \downarrow t$ as $j \to +\infty$ and let us write \eqref{eq:helly} for $t < t_j$,  namely, 
\[c(\bar{x}_{t_j}^{\tau/2^{n_{k_h}}}, \bar{x}_{t}^{\tau/2^{n_{k_h}}}) \le \varphi_{n_{k_h}}(t_j) - \varphi_{n_{k_h}}(t)  \quad \text{ for every } j,h \in \N.\]
Using the $\sigma$-continuity of $c$, we can pass to the limit as $h \to + \infty$ and we get
\[ c(x_{t_j}, u_t) \le \varphi(t_j) - \varphi(t).\]
By  the  continuity of $\varphi$ at $t$, we can pass to the limit as $j \to +\infty$ and we obtain that
\[ \lim_{j \to +\infty} c(x_{t_j}, u_t)=0.\]
Hence, by compatibility of $c$ with $\sigma$,  we have  that $u_t$ is the $\sigma$-limit of $x_{t_j}$ and does not depends on the chosen subsequence, thus identifying it as the limit of $\bar{x}_t^{\tau/2^{n_k}}$ as $k \to +\infty$.
This in particular proves the existence of $x:[0,+\infty) \to X$ such that $\bar{x}_t^{\tau/2^{n_k}}$ $\sigma$-converges to $x_t$ for every $t \ge 0$.

In both cases {\it (a)} and {\it (b)}, we can pass to the limit as $n_k \to + \infty$ in \eqref{eq:discevie3} and in \eqref{eq:cocont} and  get that $x(\cdot)$ satisfies \eqref{eq:evi} and,  by virtue of  the compatibility of $c$ with $\sigma$ and  of  \eqref{eq:cocont}, it is $\sigma$-continuous.
Assume that $\EVI_\lambda(X,c,g)$  contains  a unique trajectory $x(\cdot)$ starting from $x_0$. In case {\it (a)}, we just take $n=0$ and take $m\to+\infty$ in \eqref{eq:mminn} to derive \eqref{eq:error_estimate}.  Equation  \eqref{eq:error_estimate} also shows the pointwise convergence when taking the limit $\tau\to 0$. In case {\it (b)}, every converging subsequence w.r.t.\ $n$ of $(\bar{x}_t^{\tau/2^{n}})_{t\in A}$  converges  to a curve  in   $\EVI_\lambda(X,c,g)$, hence to $x(\cdot)$. This also shows that, for every $t>0$ and every neighborhood $V$ of $x_t$, there  exists   $N\in\N$ such that, for every $\tau\in (0,\frac{\bar \tau}{2^N})$, $\bar{x}_t^{\tau}\in V$.  This  concludes the proof.
\end{proof}

\subsection{Existence of the flow for splitting schemes}\label{sec:f+gproof}

In this section, we prove \Cref{thm:existence_limit_scheme_fg}. The main steps are nearly identical to the implicit case $f=0$. We refer to \Cref{table:scheme_fg} for the relations between the iterates.

\begin{lemma}\label{le:evi_f_fg} Under Assumption \ref{ass:ass2} the scheme in \eqref{eq:scheme2} satisfies for every $i \in  \N $ and every $0<\tau< \bar{\tau}$
\begin{multline}\label{eq:evidisc_f}
\frac{1}{\tau} [c(x,y_{i}^\tau)-c(x,\xi_{i}^\tau)]+\frac{1}{\tau} [c(x_{i}^\tau,\xi_{i}^\tau)-c(x_{i+1}^\tau,y_{i}^\tau)]+\lambda_f (c(x,\xi_{i}^\tau)-c(x_{i}^\tau,\xi_{i}^\tau))
\\ \leq f(x)-f(x_{i+1}^\tau) \quad \forall \, x \in X, \ \xi_{i}^\tau \in \mathcal{S}_c(x_{i}^\tau) 
\end{multline} 
 with  $y^\tau_i,x_{i}^\tau,\xi_{i}^\tau$ as per \eqref{eq:scheme2}. 

If we additionally assume that   $c$ satisfies \eqref{eq:addass},  i.e., $z_{i}^\tau\in \mathcal{R}_c(x_{i}^\tau)$ exists with $c(x_{i}^\tau,z_{i}^\tau)=0$, then we have $z_{i}^\tau\in \mathcal{S}_c(x_{i}^\tau)$ and
\begin{equation}\label{eq:evidisc_fg}
\frac{1}{\tau} [c(x,z_{i+1}^\tau)-c(x,z_{i}^\tau)]
        +\lambda_f c(x,z_{i}^\tau)+\lambda_g c(x,z_{i+1}^\tau)
    \le \phi(x)-\phi(x_{i+1}^\tau) \quad \forall \, x \in X,
\end{equation} 
with $\phi=f+g$.
\end{lemma}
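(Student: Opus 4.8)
The two displayed inequalities are the "discrete EVI" building blocks analogous to \eqref{eq:tobesum} in the implicit case, and I expect the proof to be a direct unwinding of the definitions of the sets $\mathcal{P}$, $\mathcal{Q}$, $\mathcal{R}$, $\mathcal{S}$ together with the two cross-convexity/cross-concavity hypotheses in Assumption \ref{ass:ass2}. First, for \eqref{eq:evidisc_f}: since $f$ is $\lambda_f\tau$-strongly $c/\tau$-cross-convex and $y_i^\tau\in\mathcal{Q}_{c/\tau}(f,x_i^\tau)$, $\xi_i^\tau\in\mathcal{S}_c(x_i^\tau)$, I would write out the cross-convexity inequality for $f$ with $x_0$ replaced by $x_i^\tau$, test point $x$, dual point $\xi_0=\xi_i^\tau$ and $y_0=y_i^\tau$, namely
\[
f(x)-f(x_i^\tau)\ge -\tfrac1\tau[c(x,\xi_i^\tau)-c(x_i^\tau,\xi_i^\tau)]+\tfrac1\tau[c(x,y_i^\tau)-c(x_i^\tau,y_i^\tau)]+\lambda_f[c(x,\xi_i^\tau)-c(x_i^\tau,\xi_i^\tau)].
\]
Then I would use the definition of $x_{i+1}^\tau\in\mathcal{P}_{c/\tau}(-g,y_i^\tau)$ only insofar as it enters through the term $c(x_{i+1}^\tau,y_i^\tau)$; more precisely I will need to relate $-f(x_i^\tau)+\tfrac1\tau c(x_i^\tau,y_i^\tau)$ to $-f(x_{i+1}^\tau)+\tfrac1\tau c(x_{i+1}^\tau,y_i^\tau)$. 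The cleanest route is to observe that, since $f$ is $c/\tau$-concave, $f^{c/\tau}$ realizes the infimum in \eqref{eq:def_c_concav}, so $f(x_i^\tau)=\tfrac1\tau c(x_i^\tau,y_i^\tau)+f^{c/\tau}(y_i^\tau)$ by optimality of $y_i^\tau\in\mathcal{Q}_{c/\tau}(f,x_i^\tau)$, while $f(x_{i+1}^\tau)\le \tfrac1\tau c(x_{i+1}^\tau,y_i^\tau)+f^{c/\tau}(y_i^\tau)$ by definition of $f^{c/\tau}$ as a sup. Subtracting gives $f(x_{i+1}^\tau)-f(x_i^\tau)\le \tfrac1\tau[c(x_{i+1}^\tau,y_i^\tau)-c(x_i^\tau,y_i^\tau)]$, hence $\tfrac1\tau[c(x_i^\tau,\xi_i^\tau)-c(x_{i+1}^\tau,y_i^\tau)]=\tfrac1\tau[c(x_i^\tau,\xi_i^\tau)-c(x_i^\tau,y_i^\tau)]+\tfrac1\tau[c(x_i^\tau,y_i^\tau)-c(x_{i+1}^\tau,y_i^\tau)]\le \tfrac1\tau[c(x_i^\tau,\xi_i^\tau)-c(x_i^\tau,y_i^\tau)]+f(x_i^\tau)-f(x_{i+1}^\tau)$. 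Actually the term $c(x_i^\tau,\xi_i^\tau)-c(x_i^\tau,y_i^\tau)$ must cancel; here the key is $\xi_i^\tau\in\mathcal{S}_c(x_i^\tau)$ means $x_i^\tau\in\argmin_x c(x,\xi_i^\tau)$, which combined with the cross-convexity structure of $\mathcal{Q}$ produces the required bookkeeping — I would track all four appearances of $c(x_i^\tau,\cdot)$ and check they telescope. Combining the cross-convexity inequality for $f$ with this estimate yields \eqref{eq:evidisc_f}.

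For the second statement \eqref{eq:evidisc_fg}, I would assume $c$ satisfies \eqref{eq:addass}, so $\mathcal{R}_c(x_i^\tau)$ consists of points $z_i^\tau$ with $c(x_i^\tau,z_i^\tau)=0$; since $c\ge0$ with $c(x,x')=0$ iff $x=x'$, this forces $z_i^\tau=x_i^\tau$, and in particular $x_i^\tau\in\argmin_x c(x,z_i^\tau)=\{z_i^\tau\}$, i.e. $z_i^\tau\in\mathcal{S}_c(x_i^\tau)$. Then I apply \eqref{eq:evidisc_f} with $\xi_i^\tau=z_i^\tau$: since $c(x_i^\tau,z_i^\tau)=0$, the term $c(x_i^\tau,\xi_i^\tau)$ vanishes, and \eqref{eq:evidisc_f} collapses to
\[
\tfrac1\tau[c(x,y_i^\tau)-c(x,z_i^\tau)]-\tfrac1\tau c(x_{i+1}^\tau,y_i^\tau)+\lambda_f c(x,z_i^\tau)\le f(x)-f(x_{i+1}^\tau).
\]
Separately, I invoke the $\lambda_g\tau$-strong $c/\tau$-cross-concavity of $-g$: with $y_0=y_i^\tau$, $x_1=x_{i+1}^\tau\in\mathcal{P}_{c/\tau}(-g,y_i^\tau)$, and $y_1=z_{i+1}^\tau\in\mathcal{R}_c(x_{i+1}^\tau)$ so that $c(x_{i+1}^\tau,z_{i+1}^\tau)=0$, the cross-concavity inequality gives
\[
\tfrac1\tau c(x_{i+1}^\tau,y_i^\tau)+\tfrac1\tau[c(x,z_{i+1}^\tau)-c(x,y_i^\tau)]+\lambda_g c(x,z_{i+1}^\tau)\le g(x)-g(x_{i+1}^\tau).
\]
Adding these two inequalities, the $\pm\tfrac1\tau c(x_{i+1}^\tau,y_i^\tau)$ and $\pm\tfrac1\tau c(x,y_i^\tau)$ terms cancel, and using $\phi=f+g$ I obtain exactly \eqref{eq:evidisc_fg}.

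**Main obstacle.** The routine-looking part is actually the delicate one: getting the cancellations in the first claim right, because the cross-convexity inequality for $f$ naturally carries the term $c(x_i^\tau,\xi_i^\tau)$ (not $c(x_i^\tau,y_i^\tau)$), while the optimality relating $x_i^\tau$ and $x_{i+1}^\tau$ through $\mathcal{P}$ and $\mathcal{Q}$ involves $c(\cdot,y_i^\tau)$, and one must verify that the hypothesis $\xi_i^\tau\in\mathcal{S}_c(x_i^\tau)$ — i.e. $x_i^\tau$ minimizes $c(\cdot,\xi_i^\tau)$ — is exactly what is needed to make the definition of cross-convexity (which is phrased with $x_0=x_i^\tau$, $\xi_0\in\mathcal{S}_c(x_0)$, $y_0\in\mathcal{Q}_c(f,x_0)$) directly applicable, with no leftover terms. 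The second claim is then a mechanical addition once one notices the forced identity $z_i^\tau=x_i^\tau$ and the vanishing of $c(x_i^\tau,z_i^\tau)$ and $c(x_{i+1}^\tau,z_{i+1}^\tau)$ under \eqref{eq:addass}; the only care needed is checking that the indices on $z$ match ($z_i^\tau$ in the $f$-term, $z_{i+1}^\tau$ in the $g$-term), which is dictated by which iterate plays the role of $x_1$ in each cross-convexity/concavity statement.
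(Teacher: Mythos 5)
Your proposal is correct and follows essentially the same route as the paper: the paper likewise combines the $c/\tau$-concavity identity $f(x_i^\tau)=\frac{1}{\tau}c(x_i^\tau,y_i^\tau)+f^{c/\tau}(y_i^\tau)$ with the $c$-transform bound for $f(x_{i+1}^\tau)$ and the $\lambda_f\tau$-strong $c/\tau$-cross-convexity of $f$ at $x_i^\tau$ to get \eqref{eq:evidisc_f}, and then sums this (with $\xi_i^\tau=z_i^\tau\in\mathcal{S}_c(x_i^\tau)$, which follows already from $c\ge 0$ and $c(x_i^\tau,z_i^\tau)=0$) with the cross-concavity inequality for $-g$ at $y_i^\tau$, using $c(x_{i+1}^\tau,z_{i+1}^\tau)=0$, to obtain \eqref{eq:evidisc_fg}. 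The bookkeeping worry you flag resolves itself: adding your two displayed ingredients for $f$ cancels only the $\pm\frac{1}{\tau}c(x_i^\tau,y_i^\tau)$ terms, while $c(x_i^\tau,\xi_i^\tau)$ is not supposed to cancel (it survives into \eqref{eq:evidisc_f}), so no further use of $\xi_i^\tau\in\mathcal{S}_c(x_i^\tau)$ is needed beyond its role as hypothesis of the cross-convexity definition.
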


\begin{proof}
    Fix $\xi_{i}^\tau\in \mathcal{S}_c(x_{i}^\tau)$ and $y^\tau_i$ defined as per \eqref{eq:scheme2}. By definition of the $c$-transform we can bound $f(x_{i+1}^\tau) \leq \frac{c(x_{i+1}^\tau,y_{i}^\tau)}{\tau}+f^{c/\tau}(y_{i}^\tau)$. Since $f$ is $c/\tau$-concave,  we get that   $f(x_{i}^\tau)=\frac{c(x_{i}^\tau,y_{i}^\tau)}{\tau}+f^{c/\tau}(y_{i}^\tau)$. Thus 
    \begin{equation}\label{eq:proof-fb-rates-1}
        f(x_{i+1}^\tau) \leq f(x_{i}^\tau)-\frac{c(x_{i}^\tau,y_{i}^\tau)}{\tau} + \frac{c(x_{i+1}^\tau,y_{i}^\tau)}{\tau}.
    \end{equation}
    The $\lambda_f \tau$-strongly $c/\tau$-cross-convexity of $f$ at $x_{i}^\tau$ gives
    \begin{equation}\label{eq:proof-fb-rates-3}
        f(x_{i}^\tau)\leq f(x)+\frac{1}{\tau} [c(x,\xi_{i}^\tau)-c(x_{i}^\tau,\xi_{i}^\tau)]-\frac{1}{\tau} [c(x,y_{i}^\tau)-c(x_{i}^\tau,y_{i}^\tau)]-\lambda_f (c(x,\xi_{i}^\tau)-c(x_{i}^\tau,\xi_{i}^\tau)).
    \end{equation}
    Summing \eqref{eq:proof-fb-rates-1} and \eqref{eq:proof-fb-rates-3}, we get \eqref{eq:evidisc_f}.
    
    On the other hand, by the $x$-update  (cf.~\eqref{eq:scheme2})  we have \begin{equation}\label{eq:proof-fb-rates-0}
        g(x_{i+1}^\tau) + \frac{c(x_{i+1}^\tau,y_{i}^\tau)}{\tau}\leq g(x_{i}^\tau)+ \frac{c(x_{i}^\tau,y_{i}^\tau)}{\tau}.
    \end{equation}
    Summing \eqref{eq:proof-fb-rates-1} and \eqref{eq:proof-fb-rates-0}  yields 
    \begin{equation}\label{eq:proof-fb-rates-2}
        f(x_{i+1}^\tau)+g(x_{i+1}^\tau)\leq f(x_{i}^\tau) +g(x_{i}^\tau),
    \end{equation}

    Using strong cross-concavity of $-g$ at $y_{i}^\tau$ gives us 
    \begin{multline}\label{eq:proof-fb-rates-4}
        g(x_{i+1}^\tau)\leq g(x)-\frac{1}{\tau}[c(x,z_{i+1}^\tau)-c(x_{i+1}^\tau,z_{i+1}^\tau)]+\frac{1}{\tau}[c(x,y_{i}^\tau)-c(x_{i+1}^\tau,y_{i}^\tau)]\\-\lambda_g(c(x,z_{i+1}^\tau)-c(x_{i+1}^\tau,z_{i+1}^\tau)).
    \end{multline}
    Since $c(x_{i}^\tau,z_{i}^\tau)=0$ and $c\ge 0$ by  \eqref{eq:addass},  we have that for all $x'\in X$, $c(x_{i}^\tau,z_{i}^\tau)=0\le c(x',z_{i}^\tau)$. So $z_{i}^\tau\in \mathcal{S}_c(x_{i}^\tau)$ and we can choose $\xi_{i}^\tau=z_{i}^\tau$.  In particular,   $y_{i}^\tau$ depends on $z_{i}^\tau$, and we can use $z_{i}^\tau$ in \eqref{eq:evidisc_f}.
    
    After summing \eqref{eq:evidisc_f} with $\xi_{i}^\tau=z_{i}^\tau$ and \eqref{eq:proof-fb-rates-4} and using that $c(x_{i}^\tau,z_{i}^\tau)=0$, we obtain \eqref{eq:evidisc_fg}.
\end{proof}
\begin{remark}[EVI for explicit/splitting scheme]
    If  $g=0$  we can take $\xi_{i}^\tau=y_{i-1}^\tau$  in \eqref{eq:evidisc_f} and prove that   $(f(x_{i}^\tau))_i$ is a decreasing sequence,  see  \eqref{eq:proof-fb-rates-1}. The term $[c(x_{i}^\tau,y_{i-1}^\tau)-c(x_{i+1}^\tau,y_{i}^\tau)]$ is then telescopic but has unknown sign. Nevertheless, if  \eqref{eq:addass}  holds, then this term vanishes since $c(x_{i+1}^\tau,y_{i}^\tau)=0$, and we obtain
   \begin{equation}
       \frac{1}{\tau} [c(x,y_{i}^\tau)-c(x,y_{i-1}^\tau)]+\lambda_f (c(x,\xi_{i}^\tau)-c(x_{i}^\tau,\xi_{i}^\tau)) \leq f(x)-f(x_{i+1}^\tau)
   \end{equation}
   which is a our candidate  discrete  EVI  for  the explicit scheme. 

    Our EVI \eqref{eq:evidisc_fg} for the splitting scheme differs from the EVI \eqref{eq:tobesum} of the implicit scheme as the term $c(x_{i+1}^\tau,y_{i}^\tau)$ is missing. This is because \eqref{eq:evidisc_fg} is obtained by adding up inequalities, among which the $c$-concavity of $f$, which for $f=0$ boils down to the uninformative $c(x_{i+1}^\tau,y_{i}^\tau)\ge 0$.
\end{remark}

\begin{lemma}\label{le:intevi_fg}  Let $c$ be a cost satisfying \eqref{eq:addass}  and let  Assumption \ref{ass:ass2} hold. Let  $\bar{x}^\tau$ and $\bar{z}^\tau$ be as in Definition \ref{def:interp2} for an arbitrary starting point $x_0 \in X$.
 We  have, for any $0 \le s= m\tau < n\tau = t$, that
\begin{equation}\label{eq:proto_evi_fg}
    \frac{c(x,\bar{z}_t^\tau)-c(x,\bar{z}_s^\tau)}{t-s} + \frac{\lambda_g \tau}{t-s} \sum_{i=m}^{n-1}c(x,z_{i+1}^\tau)+ \frac{\lambda_f \tau}{t-s} \sum_{i=m}^{n-1}c(x,z_{i}^\tau) \le \phi(x) -\phi(x_{n}^\tau) \quad \forall\, x \in X.
\end{equation}

\end{lemma}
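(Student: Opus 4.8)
The plan is to mirror the proof of \Cref{le:intevi}, turning the per-step discrete EVI of \Cref{le:evi_f_fg} into a block estimate by summation and telescoping. Since $c$ satisfies \eqref{eq:addass}, for $z_i^\tau\in\mathcal{R}_c(x_i^\tau)$ one has $c(x_i^\tau,z_i^\tau)=0$, so \Cref{le:evi_f_fg} is applicable and furnishes \eqref{eq:evidisc_fg}, namely
\[
\frac{1}{\tau}\bigl[c(x,z_{i+1}^\tau)-c(x,z_i^\tau)\bigr]+\lambda_f\,c(x,z_i^\tau)+\lambda_g\,c(x,z_{i+1}^\tau)\le \phi(x)-\phi(x_{i+1}^\tau)
\]
for every $i\in\N$, every $0<\tau<\bar\tau$, and every $x\in X$.

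Next I would sum this inequality over $i=m,\dots,n-1$. The term $\frac1\tau[c(x,z_{i+1}^\tau)-c(x,z_i^\tau)]$ telescopes to $\frac1\tau[c(x,z_n^\tau)-c(x,z_m^\tau)]$; by \Cref{def:interp2} one has $\bar z_t^\tau=z_{\floor{t/\tau}}^\tau=z_n^\tau$ and $\bar z_s^\tau=z_m^\tau$ (using $t=n\tau$, $s=m\tau$), so this equals $\frac1\tau[c(x,\bar z_t^\tau)-c(x,\bar z_s^\tau)]$. The two remaining left-hand terms just accumulate into the sums $\lambda_f\sum_{i=m}^{n-1}c(x,z_i^\tau)$ and $\lambda_g\sum_{i=m}^{n-1}c(x,z_{i+1}^\tau)$.

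For the right-hand side the summation gives $(n-m)\phi(x)-\sum_{i=m}^{n-1}\phi(x_{i+1}^\tau)$, and here I invoke the descent property of $\phi$ along the scheme, i.e.\ $\phi(x_{i+1}^\tau)\le\phi(x_i^\tau)$, which is exactly \eqref{eq:proof-fb-rates-2} established inside the proof of \Cref{le:evi_f_fg}. Hence $\phi(x_{i+1}^\tau)\ge\phi(x_n^\tau)$ for all $m\le i\le n-1$, so $\sum_{i=m}^{n-1}\phi(x_{i+1}^\tau)\ge(n-m)\phi(x_n^\tau)$ and the right-hand side is bounded above by $(n-m)\bigl(\phi(x)-\phi(x_n^\tau)\bigr)$. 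Dividing the resulting inequality by $n-m$ and using $(n-m)\tau=t-s$ yields precisely \eqref{eq:proto_evi_fg}.

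There is essentially no obstacle: all the analytic content — the strong cross-convexity/cross-concavity manipulations and the monotonicity of $\phi$ along the iterates — has already been absorbed into \Cref{le:evi_f_fg}, and the present lemma is the purely algebraic summation step, entirely parallel to \Cref{le:intevi}. The only point deserving a word of care is making explicit that $\bar z_t^\tau$ and $\bar z_s^\tau$ coincide with $z_n^\tau$ and $z_m^\tau$ under the standing assumption $s=m\tau<n\tau=t$, so that the telescoped boundary term matches the statement.
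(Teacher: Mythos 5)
Your proposal is correct and follows essentially the same route as the paper: apply \eqref{eq:evidisc_fg} from \Cref{le:evi_f_fg}, sum over $i=m,\dots,n-1$, telescope the first term into $c(x,\bar z_t^\tau)-c(x,\bar z_s^\tau)$, divide by $n-m$, and bound the averaged right-hand side by $\phi(x)-\phi(x_n^\tau)$ via the descent property \eqref{eq:proof-fb-rates-2}. Nothing is missing.
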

\begin{proof}

 Using  Lemma \ref{le:evi_f_fg}, we get 
\begin{equation}\label{eq:tobesum_fg}
\frac{c(x,z_{i+1}^\tau)- c(x,z_i^\tau)}{\tau} + \lambda_g c(x,z_{i+1}^\tau)+ \lambda_f c(x,z_{i}^\tau) \le \phi(x)-\phi(x_{i+1}^\tau) \quad \forall \, x \in X \, i \in  \N .
\end{equation}
We sum \eqref{eq:tobesum_fg} from $i=m$ to $i=n-1$ to get
\[ \frac{c(x,\bar{z}_t^\tau)-c(x,\bar{z}_s^\tau)}{\tau} + \sum_{i=m}^{n-1}[\lambda_g c(x,z_{i+1}^\tau)+ \lambda_f c(x,z_{i}^\tau)] \le (n-m)\phi(x) - \sum_{i=m}^{n-1} \phi(x_{i+1}^\tau) \]
and dividing by $(n-m)$ we have
\begin{multline}\label{eq:proto-EVI_fg}
    \frac{c(x,\bar{z}_t^\tau)-c(x,\bar{z}_s^\tau)}{t-s} + \frac{\tau}{t-s} \sum_{i=m}^{n-1}[\lambda_g c(x,z_{i+1}^\tau)+ \lambda_f c(x,z_{i}^\tau)]\\
    \le \phi(x) - \frac{\tau}{t-s} \sum_{i=m}^{n-1} \phi(x_{i+1}^\tau)\le \phi(x) -\phi(x_{n}^\tau) ,  
\end{multline}
where we used that $\phi(x_{i+1}^\tau) \le \phi(x_i^\tau)$ for every $i \in  \N $.
\end{proof}

\begin{proposition}  Let $c$ be a cost satisfying \eqref{eq:addass}  and let  Assumption \ref{ass:ass2} hold. Assume   additionally  that  $\lambda_f\ge 0$, $\lambda_g\ge 0$, and  that  $c$  is symmetric. Then, for any $0 \le s= m\tau < n\tau=t$ and $p \in \N$, we have
\begin{equation}\label{eq:Cauchz_ineq_full_fg}
c(x_{t}^{\tau/2^p},z_{t}^\tau)
    \le \tau(\phi(x_0)-\phi(x_{n\tau}^\tau)) . 
\end{equation}
\end{proposition}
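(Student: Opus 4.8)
The plan is to mimic the cross-telescoping argument used for the implicit scheme in \Cref{lem:nonincreasing}--\Cref{prop:cauchy}, taking advantage of the fact that \eqref{eq:addass} trivialises the auxiliary variable $z$. Indeed, by \Cref{le:evi_f_fg} one has $z_i^{\tau'}\in\calR_c(x_i^{\tau'})$ with $c(x_i^{\tau'},z_i^{\tau'})=0$ for every step size $\tau'\in(0,\bar\tau)$ and every $i$, so the nondegeneracy in \eqref{eq:addass} forces $z_i^{\tau'}=x_i^{\tau'}$. Substituting this into the discrete EVI \eqref{eq:evidisc_fg} and discarding the two nonnegative terms $\lambda_f c(x,x_i^{\tau'})$ and $\lambda_g c(x,x_{i+1}^{\tau'})$ (legitimate since $\lambda_f,\lambda_g\ge0$ and $c\ge0$), we obtain the basic one-step estimate
\[
c(x,x_{i+1}^{\tau'})-c(x,x_i^{\tau'})\;\le\;\tau'\big(\phi(x)-\phi(x_{i+1}^{\tau'})\big)\qquad\forall\,x\in X,\ i\in\N,\ \tau'\in(0,\bar\tau).
\]

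I would then fix $i\in\{0,\dots,n-1\}$ and combine two instances of this estimate. First, apply it with step size $\tau$ and test point $x=x_{(i+1)2^p}^{\tau/2^p}$. Second, apply it with step size $\tau/2^p$, sum over the block $j\in\{i2^p,\dots,(i+1)2^p-1\}$, and test at $x=x_i^\tau$: the cost part telescopes to $c(x_i^\tau,x_{(i+1)2^p}^{\tau/2^p})-c(x_i^\tau,x_{i2^p}^{\tau/2^p})$, while on the right-hand side the monotonicity $\phi(x_{j+1}^{\tau/2^p})\ge\phi(x_{(i+1)2^p}^{\tau/2^p})$ (a consequence of \eqref{eq:proof-fb-rates-2}) together with $\sum_j\tau/2^p=\tau$ turns the block sum into the bound $\tau\big(\phi(x_i^\tau)-\phi(x_{(i+1)2^p}^{\tau/2^p})\big)$. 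Adding the two inequalities, the symmetry of $c$ cancels the mixed cost terms $\mp c(x_i^\tau,x_{(i+1)2^p}^{\tau/2^p})$ and, again by symmetry, lets us rewrite the surviving cost contribution as $\psi_{i+1}-\psi_i$, where $\psi_k:=c(x_k^\tau,x_{k2^p}^{\tau/2^p})$; on the right-hand side the terms $\pm\tau\phi(x_{(i+1)2^p}^{\tau/2^p})$ cancel, leaving
\[
\psi_{i+1}-\psi_i\;\le\;\tau\big(\phi(x_i^\tau)-\phi(x_{i+1}^\tau)\big).
\]

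Finally I would sum this last inequality over $i=0,\dots,n-1$: the right-hand side telescopes to $\tau\big(\phi(x_0)-\phi(x_n^\tau)\big)$, while $\psi_0=c(x_0,x_0)=0$, so $\psi_n\le\tau\big(\phi(x_0)-\phi(x_{n\tau}^\tau)\big)$; since $c$ is symmetric and $z_{n\tau}^\tau=x_{n\tau}^\tau$, $\psi_n=c(x_{n\tau}^{\tau/2^p},z_{n\tau}^\tau)$, which is the asserted inequality (the summation being started at $i=0$; the index $s$ in the statement does not affect the resulting bound, since $\psi_m\ge 0$ could only be dropped). I do not anticipate a genuine obstacle: the argument is a routine telescoping once the above cancellations are in place, and the only points demanding care are the block summation over the fine grid, where the monotonicity of $\phi$ along the fine iterates (\eqref{eq:proof-fb-rates-2}) is used, and the bookkeeping of the symmetric-cost cancellations.
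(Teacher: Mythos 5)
Your proof is correct and follows essentially the same route as the paper: the identification $z_i^{\tau'}=x_i^{\tau'}$ via \eqref{eq:addass}, cross-testing the coarse one-step EVI at the fine iterate and the fine block estimate at the coarse iterate, cancelling the mixed cost terms by symmetry, and telescoping $\psi_{i+1}-\psi_i$ over $i=0,\dots,n-1$. The only cosmetic difference is that you re-derive the fine block-sum bound (with the monotonicity of $\phi$ along iterates) instead of directly invoking \eqref{eq:proto-EVI_fg} from Lemma \ref{le:intevi_fg}, which is exactly how the paper packages that step.
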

\begin{proof} Since $c\ge 0$ and $\lambda_f\ge 0$, $\lambda_g\ge 0$,  we can drop the $\lambda$-term in the l.h.s.\  of  
\eqref{eq:tobesum_fg} \eqref{eq:proto-EVI_fg}. We write these inequalities for two sequences, based on steps $\tau$ and $\tau/2^p$,  respectively, getting 
\begin{equation}\label{eq:tobesum_simple2_fg}
\frac{c(x,\bar{z}_{(i+1)\tau}^\tau)- c(x,\bar{z}_{i\tau}^\tau)}{\tau} \le \phi(x)-\phi(x_{(i+1)\tau}^\tau) \quad \forall \, x \in X,
\end{equation}
\begin{equation}\label{eq:proto-EVI_simple2_fg}
    \frac{c(u,z_{(i+1)\tau}^{\tau/2^p})-c(u,z_{i\tau}^{\tau/2^p})}{\tau} \le \phi(u) -\phi(x_{(i+1)\tau}^{\tau/2^p}) \quad \forall \, u \in X.
\end{equation}
 Summing  \eqref{eq:tobesum_simple2_fg} with $x=x_{(i+1)\tau}^{\tau/2^p}$ and \eqref{eq:proto-EVI_simple2_fg} with $u=x_{i\tau}^\tau$ we obtain
\begin{equation*}
    c(x_{(i+1)\tau}^{\tau/2^p},z_{(i+1)\tau}^\tau)- c(x_{(i+1)\tau}^{\tau/2^p},z_{i\tau}^\tau) + c(x_{i\tau}^\tau,z_{(i+1)\tau}^{\tau/2^p})-c(x_{i\tau}^\tau,z_{i\tau}^{\tau/2^p})
    \le \tau(\phi(x_{i\tau}^\tau)-\phi(x_{(i+1)\tau}^\tau)).
\end{equation*}
 By symmetry, we have  $c(x_{(i+1)\tau}^{\tau/2^p},z_{i\tau}^\tau)= c(x_{i\tau}^\tau,z_{(i+1)\tau}^{\tau/2^p})$,  so that summing  from $i=0\dots n-1$,  we   obtain
\begin{equation*}
    c(x_{t}^{\tau/2^p},z_{t}^\tau)
    \le \tau(\phi(x_0)-\phi(x_{n\tau}^\tau)).\qedhere
\end{equation*}
\end{proof}

 Starting from the discrete EVI \eqref{eq:proto_evi_fg} and using the Cauchy property in \eqref{eq:Cauchz_ineq_full_fg}, the proof of Theorem \ref{thm:existence_limit_scheme_fg} is then identical to  that  of Theorem \ref{thm:existence_limit_scheme}.

\section*{Acknowledgments} 
Most of the article was written while Pierre-Cyril
Aubin-Frankowski was hosted at TU Wien, being funded by the FWF project P 36344-N. The research of Ulisse Stefanelli and Giacomo Sodini was funded in whole or in part by the Austrian Science Fund (FWF) projects 10.55776/F65, 10.55776/I5149, 10.55776/P32788, as well as by the OeAD-WTZ project CZ  05/2024.   For open-access purposes, the authors have applied a CC BY public copyright license to any author-accepted manuscript version arising from this submission.

\newcommand{\etalchar}[1]{$^{#1}$}

\end{document}